\documentclass[a4paper, 12pt]{amsart}
\usepackage[utf8]{inputenc}
\usepackage[english]{babel}
\usepackage[T1]{fontenc}
\usepackage{amsmath, amssymb, amsfonts}
\usepackage[all]{xy}
\usepackage{enumerate}
\usepackage{graphicx}
\usepackage{geometry}
\usepackage{rotate}
\usepackage{MnSymbol}
\usepackage{tikz}
\usepackage{comment}
\usepackage{hyperref}

\title{Fusion (semi)rings arising from quantum groups}
\author{Amaury Freslon}
\keywords{Compact quantum groups, representation theory, fusion ring}
\subjclass[2010]{20G42, 46L65}
\address{Univ. Paris Diderot, Sorbonne Paris Cité, IMJ-PRG, UMR 7586 CNRS, Sorbonne Universit\'es, UPMC Univ. Paris 06, F-75013, Paris, France}
\email{amaury.freslon@imj-prg.fr}
\date{\today}

\theoremstyle{plain}
\newtheorem{thm}{Theorem}[section]
\newtheorem{prop}[thm]{Proposition}
\newtheorem{cor}[thm]{Corollary}
\newtheorem{lem}[thm]{Lemma}

\theoremstyle{definition}
\newtheorem{de}[thm]{Definition}

\theoremstyle{remark}
\newtheorem{rem}[thm]{Remark}

\DeclareMathOperator{\Aut}{Aut}

\DeclareMathOperator{\Irr}{Irr}
\DeclareMathOperator{\Hom}{Hom}

\DeclareMathOperator{\Pol}{Pol}
\DeclareMathOperator{\Proj}{Proj_{\CC}}

\DeclareMathOperator{\Rep}{Rep}
\DeclareMathOperator{\rl}{rl}

\DeclareMathOperator{\Tr}{Tr}
\DeclareMathOperator{\RR}{\mathcal{R}^{+}(\CC)}

\newcommand{\B}{\mathcal{B}}
\newcommand{\C}{\mathbb{C}}
\newcommand{\CC}{\mathcal{C}^{\circ, \bullet}}

\newcommand{\G}{\mathbb{G}}

\newcommand{\N}{\mathbb{N}}
\newcommand{\NCc}{NC^{\circ, \bullet}}
\newcommand{\Pc}{P^{\circ, \bullet}}

\newcommand{\U}{\mathcal{U}^{\circ, \bullet}}
\newcommand{\Z}{\mathbb{Z}}

\newcommand{\ii}{\imath}

\newcommand{\idpart}{|}
\newcommand{\paarpart}{\sqcap}
\newcommand{\baarpartbaustein}{\rotatebox{180}{$\sqcap$}}
\newcommand{\baarpart}{
\mathrel{\vcenter{\offinterlineskip \hbox{$\baarpartbaustein$}}}}
\newcommand{\vierpartrot}{
\mathrel{\vcenter{\offinterlineskip
\hbox{$\baarpart$} \vskip -.1ex \hbox{\hskip .5ex $\shortmid$} \vskip -.1ex \hbox{$\paarpart$}}}}

\begin{document}

\begin{abstract}
We study the fusion semirings arising from easy quantum groups. We classify all the possible \emph{free} ones, answering a question of T. Banica and R. Vergnioux : these are exactly the fusion rings of quantum groups without any nontrivial one-dimensional representation. We then classify the possible groups of one-dimensional representations for general easy quantum groups associated to noncrossing partitions. As an application, we give a unified proof of the Haagerup property for a broad class of easy quantum groups, recovering as special cases previous results by M. Brannan and F. Lemeux. We end with some considerations on the description of the full fusion ring in the general case.
\end{abstract}

\maketitle

\section{Introduction}

Let $G$ be a compact group and consider the set $\Irr(G)$ of equivalence classes of irreducible representations of $G$. Endowing it with the direct sum and tensor product turns $\N[\Irr(G)]$ into a \emph{fusion semiring} $R^{+}(G)$ which carries important properties of the group $G$. Note that $R^{+}(G)$ is simply the fusion semiring of the monoidal category $\Rep(G)$ of finite-dimensional representations of $G$, hence its associated Grothendieck group $R(G)$ can be identified with the first algebraic K-theory group $K_{0}(\Rep(G))$. The monoidal structure of $\Rep(G)$ turns the latter group into a ring which is particularly relevant to the study of KK-theory since it is known to be isomorphic to $KK^{G}(\C, \C)$.

On the opposite side, let $\Gamma$ be a discrete group and consider the category of finite-dimensional corepresentations of the maximal C*-algebra $C^{*}_{\text{max}}(\Gamma)$. Then, irreducible corepresentations have dimension $1$ and are in one-to-one correspondance with elements of $\Gamma$, the tensor product being given by the group law. Thus, the associated fusion semiring is isomorphic to the group semiring $\N[\Gamma]$ and its Grothendieck group is $\Z[\Gamma]$.

The two objects mentioned above can be gathered into a single picture using the theory of \emph{compact quantum groups} of S.L. Woronowicz (see for example \cite{woronowicz1995compact}). To any compact quantum group $\G$, one can associate a fusion semiring $R^{+}(\G)$ which should be thought of as both the representation semiring of $\G$ and the group semiring of the discrete quantum dual $\widehat{\G}$. It is therefore a central object for the study of these quantum groups. We refer the reader to \cite{banica1999fusion} for a broad overview on the problems linked to fusion semirings and their connections with other subjects.

Among compact quantum groups is a very important class defined by T. Banica and R. Speicher in \cite{banica2009liberation} under the name of \emph{easy quantum groups}. The definition roughly proceeds as follows (see Section \ref{sec:preliminaries} for a rigorous definition) :
\begin{enumerate}
\item Pick up a bunch of \emph{partitions} of sets of integers with some compatibility conditions between them.
\item Associate to each partition a linear map between some finite-dimensional vector spaces. The aforementioned compatibility conditions ensuring that we can compose, make tensor products or take adjoints of these maps.
\item Because of the compatibility conditions, there is a unique smallest concrete complete monoidal C*-category (see \cite{woronowicz1988tannaka} for the definition) such that the spaces of morphisms are spanned by linear maps associated to the partitions.
\item To this category is associated a unique compact quantum group by virtue of S.L. Woronowicz' Tannaka-Krein duality theorem, which is called the \emph{easy quantum group associated to the set of partitions}.
\end{enumerate}
Examples of easy quantum groups are S. Wang's free quantum groups $S_{N}^{+}$, $O_{N}^{+}$ and $U_{N}^{+}$ introduced in \cite{wang1995free} and \cite{wang1998quantum}.

As we see in the above description, the object to which we have the more direct access is the representation category of $\G$ (or rather a "generating" part of it). It is therefore natural to look for a purely combinatorial description of the fusion ring of $\G$ in terms of the initial set of partitions. This is what we endeavoured together with M. Weber in \cite{freslon2013representation}. We gave a general description of the fusion rules (hence of the product in the fusion semiring) for all easy quantum groups. However, the general picture was made quite complicated by the presence of \emph{crossing partitions} inducing degeneracies in the constructions. When such phenomena cannot occur, i.e. when considering only \emph{noncrossing partitions}, one can hope for a tractable description of the fusion ring. Some ideas in this direction have been mentioned in the last section of \cite{freslon2013representation} and are the starting point of the present work.

We will be concerned with the notion of \emph{free fusion semiring} in the sense of \cite{banica2009fusion}. More precisely, consider a set $S$ together with an involution $x\mapsto \overline{x}$ and a \emph{fusion operation} $(x, y)\mapsto x\ast y$ which may take $\emptyset$ as a value. Then, the free monoid $F(S)$ on $S$ can be endowed with a similar structure in the following way : if $w = w_{1}\dots w_{n}$ and $w' = w'_{1}\dots w'_{n'}$ are words in $F(S)$, then
\begin{equation*}
\begin{array}{ccc}
\overline{w} & = & \overline{w}_{n}\dots \overline{w}_{1} \\
w\ast w' & = & w_{1}\dots (w_{n}\ast w'_{1})\dots w'_{n'}
\end{array}
\end{equation*}
By convention, $w\ast w' = 0$ if $w_{n}\ast w'_{1} = \emptyset$ or if one of the two words is empty.

\begin{de}\label{de:fusionsemiring}
The \emph{fusion semiring} of $S$ is the abelian semigroup $R^{+}(S) = \N[F(S)]$ endowed with the 
"tensor product" : 
\begin{equation*}
w\otimes w' = \sum_{w = az, w' = \overline{z}b} ab + a\ast b.
\end{equation*}
A semiring $R^{+}$ is said to be \emph{free} if there exists a set $S$, together with an involution and a fusion operation, such that $R^{+}\simeq R^{+}(S)$.
\end{de}

This is very far from the case of compact groups, since the latter always have commutative fusion semirings. It is nevertheless quite close to the case of free groups. In fact, free fusion semirings arose from the following observations :
\begin{enumerate}
\item Several natural classes of "free" quantum groups appear to have free fusion semirings.
\item The structure of free fusion semirings is well-suited to the generalization of "geometric" techniques used on free groups, for example related to Powers' property. 
\end{enumerate}

It was therefore asked in \cite{banica2009fusion} whether there are many easy quantum groups having free fusion semirings. We answer this question in a seemingly disappointing way : the only easy quantum groups having free fusion semirings are those which were already known. More precisely, we prove in Theorem \ref{thm:mainresult} that the elementary obstruction to freeness of having a nontrivial one-dimensional representation is the only one.

We therefore turn our attention to a more general situation. Based on Theorem \ref{thm:mainresult}, we have two parts in the fusion semiring : a "free part" coming from through-partitions of the category and a group of one-dimensional representations. We therefore endeavour to study the latter. It is in fact possible to completely classify the groups which can occur thanks to the free part. This is done in Theorem \ref{thm:1d}. In particular, this group is always cyclic, a fact which was not obvious.

Knowing the fusion rules (i.e. the fusion semiring) of a quantum group is the first step in the study of its algebraic/geometric properties. As an example, we can prove the Haagerup property, a weakening of amenability, for many easy quantum groups using our results. The argument is inspired from \cite{lemeux2013haagerup} and some elementary considerations on a natural length function for easy quantum groups. This recovers several known results but gives a unified and (in some respects) simpler proof.

We would like to emphasize the fact that even though S.L. Woronowicz' theory of compact quantum groups is a nice and convenient way to state our results, proofs in this paper do not make use of any quantum algebraic or operator algebraic technique. In fact, our aim is to understand some specific categories built from partitions. Since the morphism spaces of these categories are spaces of linear operators between finite-dimensional vector spaces, working with simple objects in this situation amounts to considering minimal projections in some matrix algebras. This is done using only combinatorial tools and some basic linear algebra.

To end this introduction, let us outline the organization of the paper. In Section \ref{sec:preliminaries}, we briefly recall some basic facts concerning (noncrossing) partitions and easy quantum groups. We then give in Section \ref{sec:representations} a summary of our work with M. Weber \cite{freslon2013representation}, the results of which will be used all over the present paper. We also solve in the noncrossing case a problem about direct sums of representations which was left open in \cite{freslon2013representation}. Section \ref{sec:free} is the core of the paper. We first explain the "capping technique" used in several proofs and then study the notion of block-stability, leading to Theorem \ref{thm:mainresult}. Building on this, we classify the "free part" of the fusion ring for any category of noncrossing partitions $\CC$ and give examples of all the possible cases. This is continued in Section \ref{sec:one} where we carry out a similar study for one-dimensional representations, ending with a similar classification in Theorem \ref{thm:1d}. Eventually, Section \ref{sec:applications} contains applications of our results. After giving some results concerning a natural length function on easy quantum groups, we are able to prove in one shot the Haagerup property for a large class of quantum groups, including all the previously known easy examples. We end with some partial results concerning the description of the full fusion ring.

The author whiches to thank the referees for their careful reading of the paper and their comments which helped improve the exposition of this work.

\section{Preliminaries}\label{sec:preliminaries}

This section is a reminder of the terminology and notations concerning partitions and easy quantum groups. We refer the reader to \cite{banica2009liberation}, \cite{freslon2013representation} or other papers on the subject for a more substantial introduction and details. Our setting is more general than in most previous works on easy quantum groups because we deal with \emph{colored} partitions. A special case of this framework appeared the work of P. Glockner and W. von Waldenfels \cite{glockner1989relations}, where the algebra of all two-colored pair partitions is introduced and linked to the Schur-Weyl duality for the unitary group $U_{N}$. This example shows the necessity of using colored partitions to go beyond the orthogonal case. Let us also mention that T. Banica and A. Skalski introduced partitions with two colors to study the representation theory of two-parameter quantum groups in \cite{banica2011two} and of some quantum isometry groups in \cite{banica2012quantum}. Their definitions are not the same as ours (in particular concerning vertical concatenation) so that it is not clear whether their works enter our setting or not, even though there are certainly strong connections.

\subsection{Colored partitions}

Easy quantum groups are based on the combinatorics of partitions and in particular noncrossing ones. A \emph{partition} consists in two integers $k$ and $l$ and a partition of the set $\{1, \dots, k+l\}$. We think of it as an upper row of $k$ points, a lower row of $l$ points and some strings connecting these points. If the strings may be put such that they do not cross, the partition will be said to be \emph{noncrossing}. The set of all partitions is denoted by $P$ and the set of all noncrossing partitions is denoted by $NC$.

A maximal set of points which are all connected in a partition is called a \emph{block}. We denote by $b(p)$ the number of blocks of a partition $p$, by $t(p)$ the number of \emph{through-blocks}, i.e. blocks containing both upper and lower points and by $\beta(p) = b(p)-t(p)$ the number of \emph{non-through-blocks}. This work is concerned with a refinement of the notion of partitions : colored partitions.

\begin{de}
A \emph{(two-)colored partition} is a partition with the additional data of a color (black or white) for each point. The set of all colored partitions is denoted by $\Pc$ and the set of noncrossing colored partitions is denoted by $\NCc$.
\end{de}

In the example below, $p_{2}$ has crossings while $p_{1}$ is a noncrossing colored partition.

\begin{center}
\begin{tikzpicture}[scale=0.5]
\draw (0,-3) -- (0,3);
\draw (-1,-3) -- (-1,-2);
\draw (1,-3) -- (1,-2);
\draw (-1,-2) -- (1,-2);
\draw (-2,-3) -- (2,3);
\draw (2,-3) -- (-2,3);

\draw (-2,3) node[above]{$\circ$};
\draw (0,3) node[above]{$\bullet$};
\draw (2,3) node[above]{$\circ$};

\draw (-2,-3) node[below]{$\circ$};
\draw (-1,-3) node[below]{$\bullet$};
\draw (0,-3) node[below]{$\bullet$};
\draw (2,-3) node[below]{$\circ$};
\draw (1,-3) node[below]{$\circ$};

\draw (-2.5,0) node[left]{$p_{1} = $};
\end{tikzpicture}
\begin{tikzpicture}[scale=0.5]
\draw (0,-1) -- (0,1);
\draw (-2,1) -- (2,1);
\draw (-2,1) -- (-2,3);
\draw (2,1) -- (2,3);
\draw (-1,2) -- (-1,3);
\draw (1,2) -- (1,3);
\draw (-1,2) -- (1,2);

\draw (-1,-1) -- (1,-1);
\draw (-1,-1) -- (-1,-3);
\draw (1,-1) -- (1,-3);

\draw (-2,3) node[above]{$\circ$};
\draw (-1,3) node[above]{$\bullet$};
\draw (0,3) node[above]{$\bullet$};
\draw (2,3) node[above]{$\circ$};
\draw (1,3) node[above]{$\circ$};

\draw (-1,-3) node[below]{$\circ$};
\draw (1,-3) node[below]{$\circ$};

\draw (-2.5,0) node[left]{$p_{2} = $};
\end{tikzpicture}
\end{center}

From now on, the word "partition" will always mean "two-colored partition". Partitions can be combined using the following \emph{category operations} :

\begin{itemize}
\item  If $p\in \Pc(k, l)$ and $q\in \Pc(k', l')$, then $p\otimes q\in \Pc(k+k', l+l')$ is their \emph{horizontal concatenation}, i.e. the first $k$ of the $k+k'$ upper points are connected by $p$ to the first $l$ of the $l+l'$ lower points, whereas $q$ connects the remaining $k'$ upper points with the remaining $l'$ lower points.
\item If $p\in \Pc(k, l)$ and $q\in \Pc(l, m)$ are such that the coloring of the lower row of $p$ is the same as the coloring of the upper row of $q$, then $qp\in \Pc(k, m)$ is their \emph{vertical concatenation}, i.e. $k$ upper points are connected by $p$ to $l$ middle points and the lines are then continued by $q$ to $m$ lower points. This process may produce loops in the partition. More precisely, consider the set $L$ of elements in $\{1, \dots, l\}$ which are not connected to an upper point of $p$ nor to a lower point of $q$. The lower row of $p$ and the upper row of $q$ both induce partitions of the set $L$. The maximum (with respect to inclusion) of these two partitions is the \emph{loop partition} of $L$, its blocks are called \emph{loops} and their number is denoted by $\rl(q, p)$. To complete the operation, we remove all the loops.
\item If $p\in \Pc(k, l)$, then $p^{*}\in \Pc(l, k)$ is the partition obtained by reflecting $p$ with respect to the horizontal axis (without changing the colors).
\item If $p\in \Pc(k, l)$, then we can shift the very left upper point to the left of the lower row (or the converse) and change its color. We do not change the strings connecting the points in this process. This gives rise to a partition in $\Pc(k-1, l+1)$ (or in $\Pc(k+1, l-1)$), called a \emph{rotated version} of $p$. We can also rotate partitions on the right.
\item Using, the category operations above, one can \emph{reverse} a partition $p$  by rotating all its upper point to the lower row and all its lower points to the upper row. This gives a new partition $\overline{p}$. Note that $\overline{p}$ is in general different from $p^{*}$, because the colors are changed by the rotation.
\end{itemize}

As an example, we give the vertical concatenation of the two partitions $p_{1}$ and $p_{2}$ defined above.
\begin{center}
\begin{tikzpicture}[scale=0.5]
\draw (0,-1) -- (0,1);
\draw (-1,1) -- (1,1);
\draw (-1,1) -- (-1,2);
\draw (1,1) -- (1,2);

\draw (-1,-1) -- (1,-1);
\draw (-1,-1) -- (-1,-2);
\draw (1,-1) -- (1,-2);

\draw (-1,2) node[above]{$\circ$};
\draw (0,2) node[above]{$\bullet$};
\draw (1,2) node[above]{$\circ$};

\draw (-1,-2) node[below]{$\circ$};
\draw (1,-2) node[below]{$\circ$};

\draw (-1.5,0) node[left]{$p_{2}p_{1} = $};
\end{tikzpicture}
\end{center}

There are four ways of coloring the partition $\idpart\in \Pc(1, 1)$. If the two points are white (resp. black), we will call it the \emph{white identity} (resp. \emph{black identity}) partition. Note that these two partitions are rotated versions of each other.

\begin{de}
A \emph{category of partitions} is the data of a set $\CC(k, l)$ of colored partitions  for all integers $k$ and $l$, which is stable under the above category operations and contains the white identity (hence also the black identity).
\end{de}

\begin{rem}\label{rem:orthogonal}
Let $\CC$ be a category of partitions containing the partition $\idpart$ with different colors on the two points. Then, using the category operations we can change the color of any point in any partition of $\CC$. Thus, $\CC$ can be treated as a category of non-colored partitions. In the language of quantum groups, an identity partition with different colors means that the fundamental representation is equivalent to its contragredient, hence the quantum group is in fact a subgroup of the free orthogonal quantum group $O_{N}^{+}$.
\end{rem}

The crucial notion for the study of the representation theory of easy quantum groups is that of \emph{projective partition}.

\begin{de}
A partition $p\in \Pc(k, k)$ is said to be \emph{projective} if it satisfies $pp = p = p^{*}$.
\end{de}

There are actually many of them, according to the following result (see \cite[Prop 2.12]{freslon2013representation}) :

\begin{prop}
A partition $p\in \Pc(k, k)$ is projective if and only if there exists a partition $r\in \Pc(k, k)$ such that $r^{*}r = p$.
\end{prop}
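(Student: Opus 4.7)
The forward implication is immediate: if $p$ is projective, then taking $r = p$ one has $r^{*}r = p^{*}p = pp = p$. The content of the statement is entirely in the converse, for which one must show that for any $r\in \Pc(k,k)$ the partition $p := r^{*}r$ satisfies $p = p^{*}$ and $p = pp$.

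Self-adjointness is a formal manipulation using the fact that the reflection $*$ is an involution on $\Pc$ and is compatible with vertical concatenation through the rule $(ab)^{*} = b^{*}a^{*}$, both of which are direct consequences of the geometric definitions of the category operations. Indeed,
\[
p^{*} = (r^{*}r)^{*} = r^{*}(r^{*})^{*} = r^{*}r = p.
\]
For idempotency, associativity of vertical concatenation yields
\[
pp = (r^{*}r)(r^{*}r) = r^{*}(rr^{*}r),
\]
so the problem reduces to the purely combinatorial identity $rr^{*}r = r$, where the equality is of partitions (loops produced along the way contributing only to $\rl$, and hence not to the underlying partition). Given this identity, $pp = r^{*}r = p$ follows at once.

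To establish $rr^{*}r = r$, I would unfold the left-hand side into the stacked diagram consisting of three copies $r^{(1)}$, $r^{*(2)}$, $r^{(3)}$, with the lower row of $r^{(1)}$ identified with the upper row of $r^{*(2)}$ and the lower row of $r^{*(2)}$ identified with the upper row of $r^{(3)}$. The structural fact to exploit is that the block structure of $r^{*(2)}$ is, by the definition of the reflection, the mirror image of that of $r$: two points lying in a block of $r^{*(2)}$ correspond, under the gluings, to points lying in a common block of $r^{(1)}$ (respectively of $r^{(3)}$). One then argues that, on the one hand, any block of $r$ connecting some $U_i$ to some $L_j$ propagates through the three layers to yield a block of the composite that connects the top-row copy of $U_i$ to the bottom-row copy of $L_j$, and, on the other hand, every block of $rr^{*}r$ meeting the outer rows restricts to a block of $r$ under the natural correspondence.

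I expect the main obstacle to lie in formalizing the second (reduction) direction: one must verify that a long path crossing the glued interfaces multiple times cannot create spurious connections between the outer rows, and that any material generated in the middle strip consists solely of blocks entirely contained in a middle row, contributing only loops that are discarded by the definition of vertical concatenation. The natural approach is induction on the number of interface crossings of the path, each inductive step using the mirrored block structure between $r$ and $r^{*}$ to collapse two consecutive crossings into either a shorter path through one copy of $r$ or into an internal loop. Once the identity $rr^{*}r = r$ is in hand, the proposition follows as explained above.
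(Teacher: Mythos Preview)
Your approach is correct and is the standard one. The paper itself does not prove this proposition; it merely quotes it as \cite[Prop~2.12]{freslon2013representation}. In that reference the proof proceeds exactly as you outline: the forward direction is trivial with $r=p$, self-adjointness of $r^{*}r$ is formal from $(ab)^{*}=b^{*}a^{*}$, and idempotency is reduced to the combinatorial identity $rr^{*}r=r$, which is then checked by an explicit analysis of the three-layer stacked diagram using the mirror symmetry between $r$ and $r^{*}$.

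Your sketch of $rr^{*}r=r$ is along the right lines but, as you yourself note, the ``reduction direction'' is where the care is needed. A cleaner way to organise that step than induction on interface crossings is to argue directly on blocks: a block of the composite meeting the outer rows is the union of blocks of the three layers glued along the interfaces, and the mirror symmetry forces every block of $r^{*(2)}$ meeting the upper (resp.\ lower) interface to be absorbed into the corresponding block of $r^{(1)}$ (resp.\ $r^{(3)}$); one then checks that the resulting block, restricted to the outer rows, coincides with a single block of $r$. Blocks of $r^{*(2)}$ not touching either interface are exactly the non-through-blocks of $r^{*}$, which become loops and are discarded. This avoids the path-tracking and makes the argument finite rather than inductive.
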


The other ingredient we need is a specific decomposition of partitions called the \emph{through-block decomposition}. Let us call a partition $p$ a \emph{building partition} if it satisfies the following properties :
\begin{enumerate}
\item All lower points of $p$ are colored in white and belong to different blocks.
\item For any lower point $1'\leqslant x'\leqslant l'$ of $p$, there exists at least one upper point which is connected to it and we define $\min_{\text{up}}(x')$ to be the smallest upper point $1\leqslant y\leqslant k$ which is connected to $x'$.
\item For any two lower points $1'\leqslant a' < b'\leqslant l'$ of $p$, we have $\min_{\text{up}}(a') < \min_{\text{up}}(b')$.
\end{enumerate}

We can use building partitions to decompose any partition. Here, we only give the noncrossing version of \cite[Prop 2.9]{freslon2013representation}.

\begin{prop}
Let $p\in \NCc$ be a noncrossing partition. Then, there exists a unique pair $(p_{l}, p_{u})$ of building partitions such that $p = p_{l}^{*}p_{u}$.
\end{prop}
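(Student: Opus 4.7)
The plan is to construct the pair $(p_l, p_u)$ explicitly from the through-block structure of $p$ and then verify the decomposition. Let $t = t(p)$ be the number of through-blocks of $p$. The target has $p_u \in \NCc(k, t)$ and $p_l \in \NCc(l, t)$, so that $p_l^*p_u$ is a partition with $k$ upper and $l$ lower points, the $t$ middle white points representing the through-blocks.

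First I would prove the combinatorial key lemma: in a noncrossing partition, if we list the through-blocks $B_1, \dots, B_t$ in the order given by their minimum upper point, then they are also listed in the order of their minimum lower point. This is where noncrossingness is essential: if $B_i$ had minimum upper point strictly smaller than $B_j$ but minimum lower point strictly larger, then the strings joining these two extreme points of each block would cross, contradicting $p \in \NCc$. This lemma is the main obstacle because everything downstream relies on the existence of a single consistent ordering of the through-blocks.

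Given such an ordering, I would define $p_u$ as the partition with $k$ upper points colored as in $p$, $t$ white lower points, where the $i$-th lower point is connected to exactly the upper points of $p$ belonging to $B_i$, and the non-through upper blocks of $p$ are preserved as purely upper blocks of $p_u$. Similarly, $p_l$ has $l$ upper points colored as the lower points of $p$, $t$ white lower points, where the $i$-th lower point is connected to exactly those points forming the lower part of $B_i$, and the non-through lower blocks of $p$ survive. One checks that $p_u$ and $p_l$ remain noncrossing (because $p$ is) and are building partitions: lower points are white and in distinct blocks by construction, they all have at least one upper neighbour (as the $B_i$ are through-blocks), and the $\min_{\mathrm{up}}$ function is strictly increasing precisely by our chosen ordering.

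It then remains to compute $p_l^* p_u$. The $t$ white middle points correspond to the through-blocks, and in the concatenation each such middle point merges the upper part of $B_i$ (coming from $p_u$) with the lower part of $B_i$ (coming from $p_l^*$) into a single block, reconstructing $B_i$; the non-through upper and lower blocks are untouched. No loop appears because each middle point has neighbours on both sides. Hence $p_l^*p_u = p$.

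For uniqueness, suppose $p = q_l^* q_u$ with $q_l, q_u$ building partitions having $s$ lower points. The composition has exactly $s$ through-blocks (one per middle point, since building partitions connect each lower point to some upper point), forcing $s = t$. The upper points of $q_u$ coincide with those of $p$ with identical colors, and the partition of the through upper points into blocks of $q_u$ is forced to match the partition by the $B_i$. The strictly-increasing $\min_{\mathrm{up}}$ condition then fixes the labelling of the $t$ lower points, so $q_u = p_u$; the same argument gives $q_l = p_l$.
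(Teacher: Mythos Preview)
The paper does not actually prove this proposition in the present text; it is quoted as the noncrossing special case of \cite[Prop~2.9]{freslon2013representation}. Your argument is correct and is precisely the natural direct construction one would expect: identify the through-blocks, order them consistently via the noncrossing condition, split each into its upper and lower halves attached to a single white middle point, and read off uniqueness from the rigidity of the building-partition axioms.

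One small point you leave implicit: when you assert that $p_u$ and $p_l$ are themselves noncrossing ``because $p$ is'', this really uses a slightly stronger form of your key lemma. Knowing only that the orderings by $\min_{\mathrm{up}}$ and by $\min_{\text{lower}}$ agree is not quite enough; what you need (and what the same crossing argument gives) is that the upper supports of distinct through-blocks occupy \emph{disjoint} intervals of the upper row, so that attaching the new lower points $1',\dots,t'$ in that order cannot create a crossing. This is immediate from noncrossingness of $p$ once stated, but it is worth making explicit, since otherwise the verification that $p_u$ is a building partition in $\NCc$ is incomplete. With that addition, the existence and uniqueness arguments go through as written.
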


\subsection{Easy quantum groups}

\subsubsection{Partitions and linear maps}

The link between partitions and easy quantum groups lies in the following definition \cite[Def 1.6]{banica2009liberation}. Note that this definition does not involve the coloring of the partitions.

\begin{de}
Let $N$ be an integer and let $(e_{1}, \dots, e_{N})$ be a basis of $\C^{N}$. For any partition $p\in \Pc(k, l)$, we define a linear map
\begin{equation*}
\mathring{T}_{p}:(\C^{N})^{\otimes k} \mapsto (\C^{N})^{\otimes l}
\end{equation*}
by the following formula :
\begin{equation*}
\mathring{T}_{p}(e_{i_{1}} \otimes \dots \otimes e_{i_{k}}) = \sum_{j_{1}, \dots, j_{l} = 1}^{n} \delta_{p}(i, j)e_{j_{1}} \otimes \dots \otimes e_{j_{l}},
\end{equation*}
where $\delta_{p}(i, j) = 1$ if and only if all strings of the partition $p$ connect equal indices of the multi-index $i = (i_{1}, \dots, i_{k})$ in the upper row with equal indices of the multi-index $j = (j_{1}, \dots, j_{l})$ in the lower row. Otherwise, $\delta_{p}(i, j) = 0$.
\end{de}

These maps can be normalized in order to get nicer operator algebraic properties by \cite[Prop 2.18]{freslon2013representation} :

\begin{prop}
Set $T_{p} = N^{\beta(p)/2}\mathring{T}_{p}$ for any partition $p\in \Pc$. Then, $T_{p}$ is a partial isometry. Moreover, $T_{p}$ is a projection if and only if $p$ is a projective partition.
\end{prop}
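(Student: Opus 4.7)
The proof rests on two elementary formulas for the unnormalised maps $\mathring{T}_p$, both of which follow directly from the definition by index-matching. First I would establish:
\begin{itemize}
\item The adjoint formula $\mathring{T}_p^* = \mathring{T}_{p^*}$, which combined with the obvious identity $\beta(p^*) = \beta(p)$ gives $T_p^* = T_{p^*}$.
\item The composition formula $\mathring{T}_q \mathring{T}_p = N^{\rl(q,p)} \mathring{T}_{qp}$ for composable $q, p$, where the factor of $N$ accounts for the loops removed in the vertical concatenation: every middle block of $pq$ that closes up into a loop forces the contraction $\sum_i 1 = N$.
\end{itemize}

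The heart of the proof is then a combinatorial identity: for every partition $p$, the threefold composition $pp^*p$ equals $p$ as a partition (after loop removal), and the total number of loops produced in forming it is such that, together with the normalisation factors $N^{\beta}$, the scalars cancel to yield $T_p T_p^* T_p = T_p$. The cleanest path I see to this identity is via the through-block decomposition $p = p_l^* p_u$ into building partitions recalled just before the statement. One writes $pp^*p = p_l^* (p_u p_u^*)(p_l p_l^*) p_u$ and analyses the blocks level by level: each intermediate product $p_u p_u^*$ and $p_l p_l^*$ has a well-controlled structure because the lower row of a building partition consists of singletons, and the only loops that can appear at each stage correspond to non-through-blocks of $p$. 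A careful bookkeeping produces the exponent predicted by $\beta(p)$. This is where I expect the main difficulty to lie, although it is ultimately a matter of counting.

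For the projection characterisation, the \emph{if} direction is then immediate: when $p$ is projective, $p^* = p$ forces $T_p^* = T_p$, and $pp = p$ combined with the same loop-count argument applied to the twofold composition $pp$ yields $T_p^2 = T_p$. For the converse, I would invoke injectivity of the map $p \mapsto \mathring{T}_p$: since the matrix entries $\delta_p(i,j)$ take values in $\{0,1\}$, two partitions induce proportional maps only when they coincide, so $\mathring{T}_p$ uniquely determines $p$. The relations $T_p = T_p^*$ and $T_p^2 = T_p$ then translate, via the adjoint and composition formulas, into $p = p^*$ and $pp = p$ respectively, i.e.\ into the projectivity of $p$. The main obstacle remains the loop identity in the threefold composition $pp^*p$; all other steps reduce to routine translation between partitions and their associated linear maps.
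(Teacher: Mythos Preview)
The paper does not actually prove this proposition: it appears in the preliminaries (Section~2.2.1) as a known fact, with the underlying adjoint and composition formulas $T_{p}^{*}=T_{p^{*}}$ and $T_{p}\circ T_{q}=N^{\gamma(p,q)}T_{pq}$ simply recorded immediately afterwards and attributed to \cite{banica2009liberation}. So there is no in-paper argument to compare against; your outline is precisely the standard route and uses exactly the two identities the paper takes for granted.

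Two remarks on your write-up. First, the identity $pp^{*}p=p$ together with the loop bookkeeping is indeed the only nontrivial step, and your plan of passing through the through-block decomposition $p=p_{l}^{*}p_{u}$ is the clean way to do it; just be aware that the paper states this decomposition only in the noncrossing case, while the proposition is asserted for arbitrary $p\in\Pc$, so you should invoke the general version from \cite{freslon2013representation} rather than the one quoted here. Second, your injectivity argument for the ``only if'' direction (``$\{0,1\}$-entries, so proportional implies equal'') tacitly needs $N\geqslant 2$: for $N=1$ every $\mathring{T}_{p}$ is the scalar $1$ and the map $p\mapsto T_{p}$ collapses completely, so the converse is genuinely false there. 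This is harmless for the paper's purposes (the standing hypothesis later is $N\geqslant 4$), but you should flag it.
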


The interplay between these maps and the category operations are given by the following rules proved in \cite[Prop. 1.9]{banica2009liberation} and \cite[Prop 2.18]{freslon2013representation} :

\begin{itemize}
\item $T_{p}^{*} = T_{p^{*}}$.
\item $T_{p}\otimes T_{q} = T_{p\otimes q}$.
\item $T_{pq} = N^{\gamma(p, q)} T_{p}\circ T_{q}$, where $\gamma(p, q) = (\beta(p) + \beta(q) - \beta(pq))/2 - \rl(p, q)$.
\end{itemize}

It should be stressed that the maps $T_{p}$ are not linearly independent in general. However, restricting to the noncrossing case rules out this problem, see Proposition \ref{prop:linearindependence}.

\subsubsection{Tannaka-Krein duality and quantum groups}

We refer the reader to the original paper \cite{woronowicz1995compact} for a comprehensive treatment of the notion of compact quantum group. Let us consider a compact quantum group $\G$ with a \emph{fundamental representation}, i.e. a finite-dimensional representation $u$ such that any finite-dimensional representation of $\G$ arises as a subrepresentation of some tensor products of $u$ and its contragredient $\overline{u}$. Let us associate to any word $w = w_{1}\dots w_{k}$ in the free monoid $F$ over $\{-1, 1\}$ a representation $u^{\otimes w}$ by setting
\begin{equation*}
u^{\otimes w} = u^{w_{1}}\otimes \dots \otimes u^{w_{k}},
\end{equation*}
where by convention $u^{1} = u$ and $u^{-1} = \overline{u}$. Then, the representation category of $\G$ is completely determined by the intertwiner spaces $\Hom(u^{\otimes w}, u^{\otimes w'})$ for all words $w, w'\in F$. Here, we see the need for two colors in order to treat arbitrary tensor products of $u$ and $\overline{u}$. If we were using only one color, we would have to assume that $u$ is equivalent to $\overline{u}$, i.e. that the quantum groups are orthogonal.

Reciprocally, given a family $(\Hom(w, w'))_{w, w'}$ of finite-dimensional vector spaces with sufficiently nice properties, one can reconstruct the compact quantum group $\G$ using S.L. Woronowicz's Tannaka-Krein theorem \cite[Thm 1.3]{woronowicz1988tannaka}. Let us state this theorem in the particular case which is relevant for us. Note that there is an obvious bijection between colorings and words in $F$ given by
\begin{equation*}
\circ \mapsto 1 \text{ and } \bullet \mapsto -1.
\end{equation*}
If $\CC$ is a category of partitions and if $w, w'\in F$, we will denote by $\CC(w, w')$ the set of partitions $p\in \CC(\vert w\vert, \vert w'\vert)$ such that the upper coloring of $p$ is $w$ and the lower coloring of $p$ is $w'$ (here $\vert w\vert$ denotes the length of the word $w$).

\begin{thm}[Woronowicz]
Let $\CC$ be a category of partitions and let $N$ be an integer. Then, there exists a unique (up to isomorphism) pair $(\G, u)$, where $\G$ is a compact quantum group and $u$ is a fundamental representation of $\G$ such that $\Hom(u^{\otimes w}, u^{\otimes w'})$ is the linear span of the maps $T_{p}$ for $p\in \CC(w, w')$. 
\end{thm}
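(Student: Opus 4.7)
The plan is to deduce the statement directly from S.L. Woronowicz's Tannaka--Krein theorem cited immediately before, so the entire task reduces to verifying that the family
\[
\Hom_{\CC}(w, w') := \Span\{T_p : p \in \CC(w, w')\}
\]
of subspaces of $\Hom\bigl((\C^N)^{\otimes |w|}, (\C^N)^{\otimes |w'|}\bigr)$ forms a concrete complete monoidal W*-category over $\C^N$. This is essentially a dictionary exercise: each axiom corresponds to one of the category operations on partitions recalled in Section~\ref{sec:preliminaries}.

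First I would check the four basic closure properties. Stability of $\Hom_\CC(w,w')$ under composition follows from $T_p \circ T_q = N^{\gamma(p,q)} T_{pq}$ combined with stability of $\CC$ under vertical concatenation; stability under tensor product follows from $T_p \otimes T_q = T_{p\otimes q}$ and horizontal concatenation; stability under the adjoint follows from $T_p^* = T_{p^*}$ and $*$-stability; and the identity on $(\C^N)^{\otimes|w|}$ is $T_{\idpart}^{\otimes |w|}$, whose building blocks lie in $\CC$ by hypothesis. Closure under taking subobjects (the ``complete'' condition) is obtained for free by passing to the concrete monoidal W*-category generated by the $T_p$'s; the combinatorial rules ensure that this closure introduces no new intertwiner spaces beyond the ones listed.

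Second, I would check the Frobenius/rigidity axiom, which encodes the existence of a conjugate to the fundamental representation. Since any category of partitions is closed under rotation of the white (and black) identity, it contains the cap and cup partitions $\paarpart$ and $\baarpart$; the associated operators $T_{\paarpart}$ and $T_{\baarpart}$ are (up to normalisation) the evaluation and coevaluation maps for the pair $(\C^N, \C^N)$, and the zig-zag equations they satisfy amount to the fact that vertical concatenations of a cup on top of a cap reduce, up to loops, to identity partitions --- a statement which is verified directly on partitions and then transferred through the composition rule. Once these verifications are in place, Woronowicz's theorem produces the unique pair $(\G, u)$.

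The only genuine obstacle is bookkeeping with the colorings: one must make sure that the bijection $\circ \leftrightarrow 1$, $\bullet \leftrightarrow -1$ is respected by every category operation, so that, for instance, rotating an upper point across the diagram (and changing its color) corresponds on the Hilbert-space side to passing between a tensor factor $\C^N$ coming from $u$ and one coming from $\overline{u}$ via the duality morphisms above. Once this dictionary is installed, the theorem follows as a direct quotation of \cite[Thm 1.3]{woronowicz1988tannaka}.
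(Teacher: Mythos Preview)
The paper does not give its own proof of this statement at all: it is stated as a direct specialization of Woronowicz's Tannaka--Krein theorem and left at that, with the citation \cite[Thm 1.3]{woronowicz1988tannaka} doing all the work. Your proposal is therefore not merely consistent with the paper's approach, it is strictly more detailed---you spell out the verification of the axioms (closure under composition, tensor, adjoint, identity, and the existence of duals via the cap/cup partitions) that the paper leaves implicit.

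One small remark on phrasing: when you say that ``closure under subobjects introduces no new intertwiner spaces beyond the ones listed'', this is correct for the spaces $\Hom(u^{\otimes w}, u^{\otimes w'})$ between word-objects, which is all the statement requires, but of course the completion does add new objects with their own morphism spaces. You might tighten that sentence to avoid any ambiguity.
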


Such a $\G$ will be called a \emph{(unitary) easy quantum group} or a \emph{partition quantum group}. Let $\U$ be the smallest category of partitions (i.e. the one generated by the white identity partition). The associated quantum group is the \emph{free unitary quantum group} $U_{N}^{+}$ introduced by S. Wang in \cite{wang1995free}. Since inclusion of categories of partitions translates into reversed inclusion of compact quantum groups, we see that any easy quantum group is a quantum subgroup of $U_{N}^{+}$. The other extreme case is the category of all partitions $\Pc$, which yields the symmetric group $S_{N}$. Thus, easy quantum groups form a special class of quantum groups $\G$ in the range
\begin{equation*}
S_{N} \subset \G \subset U_{N}^{+}.
\end{equation*}
Other examples of easy quantum groups include S. Wang's free symmetric quantum group $S_{N}^{+}$ ($\CC = \NCc$) and free orthogonal quantum group $O_{N}^{+}$ ($\CC =$ all partitions with blocks of size $2$). We refer the reader to \cite{wang1995free} and \cite{wang1998quantum} for the definition of these quantum groups and to \cite{banica2009liberation} for proofs of these facts.

As mentioned in Remark \ref{rem:orthogonal}, $\G\subset O_{N}^{+}$ if and only if $\CC$ is stable under any change of coloring. Such \emph{orthogonal easy quantum groups} have been studied in many details and are now completely classified (see for instance \cite{banica2009liberation}, \cite{banica2010classification}, \cite{weber2012classification} and \cite{raum2013full}). The world of unitary easy quantum groups is much more complicated and we will not study these objects in full generality. We will rather restrict ourselves to \emph{noncrossing quantum groups}.

\begin{de}
An easy quantum group $\G$ is said to be \emph{noncrossing} if its associated category of partitions is noncrossing.
\end{de}

In other words, $\G$ is free if and only if $S_{N}^{+} \subset \G$. In that case, the linear independance problem for the maps $T_{p}$ is completely solved (see e.g. \cite[Lem 4.16]{freslon2013representation} for a proof).

\begin{prop}\label{prop:linearindependence}
Let $\CC$ be a category of \emph{noncrossing} partitions and fix an integer $N\geqslant 4$. Then, for any $w, w'\in F$, the maps $(T_{p})_{p\in \CC(w, w')}$ are linearly independent.
\end{prop}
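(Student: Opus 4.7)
The plan is to reduce the colored statement to a classical linear-independence theorem for uncolored noncrossing partitions, which itself is proved by a Gram matrix argument.

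First I would observe that the linear map $\mathring{T}_{p}$ is defined through $\delta_{p}(i,j)$, which only records which points of $p$ are connected by strings, and completely ignores the coloring. Consequently, the map $T_{p}$ depends only on the underlying uncolored noncrossing partition in $NC(\vert w\vert, \vert w'\vert)$. Since the colorings of the top and bottom rows are fixed to $w$ and $w'$, the forgetful map $\NCc(w,w')\to NC(\vert w\vert, \vert w'\vert)$ is injective, and a fortiori so is its restriction to $\CC(w,w')$. Since a subset of a linearly independent family is linearly independent, it is enough to prove that $(T_{p})_{p\in NC(k,l)}$ is linearly independent in $\Hom\bigl((\C^{N})^{\otimes k}, (\C^{N})^{\otimes l}\bigr)$ for all $k,l$.

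Next I would use the standard Frobenius-type isomorphism $\Hom\bigl((\C^{N})^{\otimes k}, (\C^{N})^{\otimes l}\bigr)\cong (\C^{N})^{\otimes (k+l)}$, obtained by rotating all $k$ upper points of $p$ to the right of the lower row. This sends each $T_{p}$ to $T_{\widetilde p}$ where $\widetilde p\in NC(0,k+l)$, and the correspondence is a bijection between $NC(k,l)$ and $NC(0,k+l)$. Hence it suffices to show that the vectors $\{T_{p}\}_{p\in NC(n)}\subset (\C^{N})^{\otimes n}$ are linearly independent for every $n$.

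I would then set up the Gram matrix $G_{n}(N)$ whose entries are the inner products $\langle \mathring{T}_{p}, \mathring{T}_{q}\rangle$, and compute them combinatorially: summing over multi-indices, one finds $\langle \mathring{T}_{p}, \mathring{T}_{q}\rangle = N^{b(p\vee q)}$, where $p\vee q$ is the supremum of $p$ and $q$ in the partition lattice of $\{1,\dots,n\}$. Linear independence of the $T_{p}$ is then equivalent to invertibility of $G_{n}(N)$. The classical computation of this determinant (due to Tutte, and developed in the Temperley--Lieb / noncrossing-partition context by Di Francesco, Jones and others) yields a factorization
\[
\det G_{n}(N) = \prod_{k=1}^{n} P_{k}(N)^{m_{n,k}}
\]
whose roots are of the form $4\cos^{2}(\pi j/(k+1))<4$. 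Hence $\det G_{n}(N)\neq 0$ as soon as $N\geqslant 4$, which concludes.

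The main obstacle is precisely this last input — the nondegeneracy of the Gram matrix of noncrossing partitions for $N\geqslant 4$. Everything else (the forgetful reduction to the uncolored case, the rotation trick, the inner-product formula $N^{b(p\vee q)}$) is essentially bookkeeping. I would therefore treat the determinant formula as a known ingredient and quote the relevant references, since reproving it would require a detour through the recursive structure of $NC$ which is orthogonal to the purpose of the paper.
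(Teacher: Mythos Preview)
The paper does not actually prove this proposition: it is stated without proof, as a known fact from the literature on partition quantum groups (the $N\geqslant 4$ threshold is the standard one for $S_{N}^{+}$ and goes back to Banica's work). So there is nothing to compare against on the paper's side; your outline is supplying an argument the paper omits.

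Your reduction steps are correct: $T_{p}$ is insensitive to the coloring, the forgetful map $\CC(w,w')\to NC(\vert w\vert,\vert w'\vert)$ is injective once $w,w'$ are fixed, and the rotation isomorphism reduces everything to the linear independence of $\{T_{p}\}_{p\in NC(0,n)}$ in $(\C^{N})^{\otimes n}$. The Gram computation $\langle \mathring{T}_{p},\mathring{T}_{q}\rangle=N^{b(p\vee q)}$ is also right, with the join taken in the full partition lattice $P(n)$.

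One caveat on the last step. The factorization you quote, with roots $4\cos^{2}(\pi j/(k+1))$, is the Temperley--Lieb/Jones Gram determinant for \emph{noncrossing pairings} $NC_{2}(2m)$, not directly for the matrix $(N^{b(p\vee q)})_{p,q\in NC(n)}$ indexed by all noncrossing partitions. To bridge the two one typically invokes the fattening bijection $NC(n)\cong NC_{2}(2n)$ and checks it is compatible with the Gram forms, or one cites a direct computation of the $NC$ Gram determinant (e.g.\ Kodiyalam--Sunder, or the dimension counts in Banica's papers on $S_{N}^{+}$). The attribution to Tutte is a bit off; Tutte's determinant concerns chromatic joins on the full partition lattice, which gives the $N\geqslant n$ bound relevant to $S_{N}$, not the $N\geqslant 4$ bound relevant to $S_{N}^{+}$. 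None of this affects the validity of your strategy --- the nonvanishing for $N\geqslant 4$ is indeed the known input --- but if you write this up, make the reference for the $NC$ Gram determinant precise rather than borrowing the pair-partition formula verbatim.
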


\section{Representations associated to partitions}\label{sec:representations}

\subsection{General structure of the representation theory}

From now on, let us fix a category of \emph{noncrossing} partitions $\CC$, an integer $N\geqslant 4$ (so that we can use Proposition \ref{prop:linearindependence}) and let $(\G, u)$ be the associated easy quantum group. We briefly recall the description of the representations theory of $\G$ given in \cite[Sec 6.2]{freslon2013representation}. For $w\in F$, let $\Proj(w)$ denote the set of projective partitions in $\CC$ with upper (and thus also lower) coloring $w$ and note that the through-block decomposition of a projective partition has the form $p = p_{u}^{*}p_{u}$.

\begin{de}
Two projective partitions $p, q\in \CC$ are said to be \emph{equivalent} if there exists a partition $r\in \CC$ such that
\begin{equation*}
p = r^{*}r \text{ and } q = rr^{*}.
\end{equation*}
In that case, we write $p\sim q$. Note that $p\sim q$ implies that $t(p) = t(q)$. Equivalently, setting $r^{p}_{q} = q_{u}^{*}p_{u}$, we have that $p\sim q$ if and only if $r^{p}_{q} \in \CC$. 
\end{de}

\begin{de}
A projective partition $q\in \CC$ is said to be \emph{dominated} by another projective partition $p\in \CC$ if $pq = qp = q$. This is equivalent to the fact that $T_{p}$ dominates $T_{q}$ as a projection. In that case, we write $q\preceq p$. If moreover $q\neq p$, we write $q\prec p$.
\end{de}

For $p\in \Proj(k)$, we can define a projection $P_{p}\in \Hom(u^{\otimes w}, u^{\otimes w})$ and a representation $u_{p}\subset u^{\otimes w}\in C_{\text{max}}(\G)\otimes \B((\C^{N})^{\otimes \vert w\vert})$ by
\begin{equation*}
P_{p} = T_{p} - \bigvee_{q\prec p} T_{q} \text{ and } u_{p} = (\ii\otimes P_{p})(u^{\otimes k}).
\end{equation*}
According to \cite[Sec 6.2]{freslon2013representation}, the representations $u_{p}$ enjoy the following properties : 
\begin{itemize}
\item $u_{p}$ is non-zero and irreducible for all $p\in \Proj(w)$.
\item Any irreducible representation of $\G$ is unitarily equivalent to $u_{p}$ for some $p$. \item $u_{p}$ is unitarily equivalent to $u_{q}$ if and only if $p\sim q$.
\end{itemize}

\begin{rem}
The above description is rather simple because the category of partitions is assumed to be noncrossing. When crossings are allowed, new problems arise, see \cite[Sec 4]{freslon2013representation}
\end{rem}

Let us now describe the decomposition of the tensor product of $u_{p}$ and $u_{q}$, i.e. the \emph{fusion rules} of $\G$. Subrepresentations of $u_{p}\otimes u_{q}$ are associated to partitions obtained by "mixing" the structure of $p$ and $q$. To explain this, we first need to introduce some specific partitions : we denote by $h_{\square}^{k}$ the projective partition in $\NCc(2k, 2k)$ where the $i$-th point in each row is connected to the $(2k-i+1)$-th point in the same row (i.e. an increasing inclusion of $k$ blocks of size $2$) and all the points are white. If moreover we connect the points $1$, $k$, $1'$ and $k'$, we obtain another projective partition in $\NCc(2k, 2k)$ denoted $h_{\boxvert}^{k}$.

\begin{center}
\begin{tikzpicture}[scale=0.5]
\draw (-0.5,2) -- (0.5,2);
\draw (-1.5,1) -- (1.5,1);
\draw (-0.5,2) -- (-0.5,3);
\draw (0.5,2) -- (0.5,3);
\draw (-1.5,1) -- (-1.5,3);
\draw (1.5,1) -- (1.5,3);

\draw (-0.5,-2) -- (0.5,-2);
\draw (-1.5,-1) -- (1.5,-1);
\draw (-0.5,-2) -- (-0.5,-3);
\draw (0.5,-2) -- (0.5,-3);
\draw (-1.5,-1) -- (-1.5,-3);
\draw (1.5,-1) -- (1.5,-3);

\draw (-1.5,3) node[above]{$\circ$};
\draw (-0.5,3) node[above]{$\circ$};
\draw (0.5,3) node[above]{$\circ$};
\draw (1.5,3) node[above]{$\circ$};

\draw (-1.5,-3) node[below]{$\circ$};
\draw (-0.5,-3) node[below]{$\circ$};
\draw (0.5,-3) node[below]{$\circ$};
\draw (1.5,-3) node[below]{$\circ$};

\draw (-2,0) node[left]{$h_{\square}^{2} = $};
\end{tikzpicture}
\begin{tikzpicture}[scale=0.5]
\draw (-0.5,2) -- (0.5,2);
\draw (-1.5,1) -- (1.5,1);
\draw (-0.5,2) -- (-0.5,3);
\draw (0.5,2) -- (0.5,3);
\draw (-1.5,1) -- (-1.5,3);
\draw (1.5,1) -- (1.5,3);

\draw (0,-1) -- (0,1);

\draw (-0.5,-2) -- (0.5,-2);
\draw (-1.5,-1) -- (1.5,-1);
\draw (-0.5,-2) -- (-0.5,-3);
\draw (0.5,-2) -- (0.5,-3);
\draw (-1.5,-1) -- (-1.5,-3);
\draw (1.5,-1) -- (1.5,-3);

\draw (-1.5,3) node[above]{$\circ$};
\draw (-0.5,3) node[above]{$\circ$};
\draw (0.5,3) node[above]{$\circ$};
\draw (1.5,3) node[above]{$\circ$};

\draw (-1.5,-3) node[below]{$\circ$};
\draw (-0.5,-3) node[below]{$\circ$};
\draw (0.5,-3) node[below]{$\circ$};
\draw (1.5,-3) node[below]{$\circ$};

\draw (-2,0) node[left]{$h_{\boxvert}^{2} = $};
\end{tikzpicture}
\end{center}

From this, we define binary operations on projective partitions (using $\idpart$ to denote the white identity) :
\begin{eqnarray*}
p\square^{k} q & = & (p_{u}^{*}\otimes q_{u}^{*})\left(\idpart^{\otimes t(p)-k}\otimes h_{\square}^{k}\otimes \idpart^{\otimes t(q)-k}\right)(p_{u}\otimes q_{u}) \\
p\boxvert^{k} q & = & (p_{u}^{*}\otimes q_{u}^{*})\left(\idpart^{\otimes t(p)-k}\otimes h_{\boxvert}^{k}\otimes \idpart^{\otimes t(q)-k}\right)(p_{u}\otimes q_{u})
\end{eqnarray*}
for $0\leqslant k\leqslant \min(t(p), t(q))$. We can now state the key result \cite[Thm 6.8]{freslon2013representation} :
\begin{equation*}
u_{p}\otimes u_{q} = u_{p\otimes q}\oplus\sum_{k=1}^{\min(t(p), t(q))} (u_{p\square^{k}q} \oplus u_{p\boxvert^{k}q}),
\end{equation*}
where by convention $u_{r} = 0$ if $r\notin \CC$. We can in fact strengthen this statement by noticing that the projective partitions appearing in the left-hand side have pairwise different number of through-blocks. According to \cite[Prop 4.23]{freslon2013representation}, this implies that they are pairwise orthogonal, hence the sum is a direct sum (see \cite[Rmk 5.8]{freslon2013representation}). We therefore have :
\begin{equation*}
u_{p}\otimes u_{q} = u_{p\otimes q}\oplus\bigoplus_{k=1}^{\min(t(p), t(q))} (u_{p\square^{k}q} \oplus u_{p\boxvert^{k}q}),
\end{equation*}

\begin{rem}
In general, tensor products of such representations are given by the more complicated formula of \cite[Thm 4.27]{freslon2013representation}, where the representations may not be in direct sum.
\end{rem}

\subsection{Direct sum of representations}

The key feature of the family of representations $u_{p}$ is that they in fact yield all irreducible representations up to unitary equivalence. This is a consequence of the decomposition of $u^{\otimes w}$ given in \cite[Thm 6.5]{freslon2013representation} :
\begin{equation}\label{eq:generaldecomposition}
u^{\otimes w} = \sum_{p\in \Proj(w)}u_{p}.
\end{equation}
However, this decomposition is unsatisfying in the sense that it is not proven that the subrepresentations are in direct sum. More precisely, there could be pairwise equivalent projective partitions $p, q_{1}, \dots q_{n}$, all distinct, such that
\begin{equation*}
P_{p} < \bigvee_{i} P_{q_{i}}.
\end{equation*}
This would mean that $u_{p}\subset \sum_{i} u_{q_{i}}$, i.e. $u_{p}$ is redundant in Equation \eqref{eq:generaldecomposition}. Making \cite[Thm 6.5]{freslon2013representation} more precise means characterizing which projective partitions are redundant. Restricting to the noncrossing case, we can solve this problem.

Let us first make some observations. For a projective partition $p\in \Proj(w)$, we denote by $[p]_{w}$ the equivalence class of $p$ in $\Proj(w)$ and by $n_{w}(p)$ the cardinality of that class. Taking the supremum of the projections $P_{q}$ over all $q\in [p]_{w}$ yields a projection $P_{[p]_{w}}$ and an associated representation $u_{[p]_{w}}\subset u^{\otimes w}$. Let $E_{w}(\CC)$ be a system of representatives of the equivalence classes of $\Proj(w)$. By \cite[Prop 4.23]{freslon2013representation} (or rather its straightforward colored generalization), the representations $u_{[p]_{w}}$ and $u_{[q]_{w}}$ are orthogonal as soon as $p$ is not equivalent to $q$ and
\begin{equation}\label{eq:classdecompostion}
u^{\otimes w} = \bigoplus_{p\in E_{w}(\CC)}u_{[p]_{w}}.
\end{equation}
By the orthogonality property, any irreducible subrepresentation of $u_{[p]_{w}}$ must be equivalent to $u_{p}$, hence $u_{[p]_{w}} \sim \nu_{w}(p)u_{p}$ for some integer $0\leqslant \nu_{w}(p)\leqslant n_{w}(p)$. Having redundant projective partitions means that $\nu_{w}(p) < n_{w}(p)$.

\begin{lem}\label{lem:counting}
Let $\CC$ be a category of \emph{noncrossing} partitions and let $w\in F$. Then, $\vert \CC(w, w)\vert = \sum_{p\in E_{w}(\CC)}n_{w}(p)^{2}$, where $\vert D\vert$ denotes the cardinality of a set $D$.
\end{lem}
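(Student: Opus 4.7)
The plan is to establish the identity by exhibiting an explicit bijection
$$\Phi\colon \CC(w,w)\longrightarrow \{(p,q)\in \Proj(w)\times \Proj(w):p\sim q\},\qquad r\mapsto (r^*r,rr^*).$$
Partitioning the codomain by the (common) equivalence class of $p$ and $q$ then gives $|\{(p,q):p\sim q\}|=\sum_{p\in E_w(\CC)}n_w(p)^2$, whence the lemma.

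That $\Phi$ is well defined is immediate from the proposition characterising projective partitions ($r^*r$ and $rr^*$ are projective elements of $\CC(w,w)$, hence of $\Proj(w)$) together with the definition of equivalence (with $r$ itself as witness). Surjectivity is equally direct: if $p\sim q$, then by definition there is $r\in\CC$ with $r^*r=p$ and $rr^*=q$, and the colorings of $p,q$ force $r\in\CC(w,w)$.

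Injectivity is the main content. I would invoke the uniqueness of the through-block decomposition in the noncrossing case to write $r=r_l^*r_u$ with $r_l,r_u\in\CC(w,v)$ both building, where $v=\circ^{t(r)}$. Then
$$r^*r=r_u^*(r_lr_l^*)r_u,\qquad rr^*=r_l^*(r_ur_u^*)r_l,$$
so everything reduces to the following claim: \emph{for any building partition $b\in\CC(w,v)$, the composition $bb^*$ equals the identity partition $\id_v$.} Granting this, $r^*r=r_u^*r_u$ and $rr^*=r_l^*r_l$ are precisely the through-block decompositions of the respective projectives, so by uniqueness $r_u=(r^*r)_u$ and $r_l=(rr^*)_u$ are both recovered from $\Phi(r)$, hence so is $r$.

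The claim is where I expect the only real technical work. By conditions (1) and (2) in the definition of a building partition, each lower point $j'$ of $b$ lies in a through-block $U_j\cup\{j'\}$ with $U_j\neq\emptyset$, and the sets $U_j$ are pairwise disjoint (since lower points lie in different blocks). In the composition $bb^*$ (with $b^*$ on top), the through-block $\{j\}\cup U_j$ of $b^*$ meets the through-block $U_j\cup\{j'\}$ of $b$ exactly in the middle-row points $U_j$, and the two glue into $\{j,j'\}$ after deletion of the middle row. Condition (2) also forces $b$ to have no non-through-blocks on its lower row, so the only other pieces are the non-through upper-row blocks of $b$, each of which coincides on the middle row with its mirror image in $b^*$ and produces merely a loop (removed by the composition rule) and nothing on the outer rows. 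Thus $bb^*=\id_v$ as a partition, as claimed.
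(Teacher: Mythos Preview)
Your argument is correct and is essentially the paper's own approach, just phrased as a bijection onto pairs rather than a fibre count. The paper considers the map $f\colon \CC(w,w)\to\Proj(w)$, $r\mapsto r^{*}r$, and asserts (without proof) that $f(r)=p$ if and only if $r=r^{p}_{q}=q_{u}^{*}p_{u}$ for some $q\sim p$; counting fibres then gives $|\CC(w,w)|=\sum_{p\in\Proj(w)}n_{w}(p)=\sum_{p\in E_{w}(\CC)}n_{w}(p)^{2}$. Your bijection $r\mapsto(r^{*}r,rr^{*})$ and its inverse $(p,q)\mapsto q_{u}^{*}p_{u}$ encode exactly the same fact, and your claim $bb^{*}=\id_{v}$ for building $b$ is precisely the missing justification of the paper's unproved assertion. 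One small slip: you write $r_{l},r_{u}\in\CC(w,v)$, but the through-block decomposition only guarantees that $r_{l},r_{u}$ are noncrossing building partitions, not that they lie in $\CC$; this does not matter, since your injectivity argument is purely combinatorial and never uses membership in $\CC$.
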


\begin{proof}
Consider the surjective map
\begin{equation*}
f : \left\{\begin{array}{ccc}
\CC(k, k) & \rightarrow & \Proj(k) \\
r & \mapsto & r^{*}r
\end{array}\right.
\end{equation*}
For any $p\in \Proj(w)$, $f(r) = p$ if and only if there exists $q\in \Proj(w)$ such that $q\sim p$ and $r = r^{p}_{q}$. Hence, $\vert f^{-1}(\{p\})\vert = n_{w}(p)$. Adding up, we get
\begin{equation*}
\vert \CC(w, w)\vert = \sum_{p\in \Proj(w)}n_{w}(p) = \sum_{p\in E_{w}(\CC)}n_{w}(p)^{2}.
\end{equation*}
\end{proof}

\begin{lem}
Let $N\geqslant 4$ be an integer, let $\CC$ be a category of \emph{noncrossing} partitions and let $w\in F$. Then, $\nu_{w}(p) = n_{w}(p)$ for all $p\in \Proj(w)$.
\end{lem}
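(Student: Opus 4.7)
The plan is to compute $\dim \Hom(u^{\otimes w}, u^{\otimes w})$ in two different ways and then compare them, exploiting the bound $\nu_{w}(p) \leqslant n_{w}(p)$ established just before the lemma.

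On the one hand, by Woronowicz's Tannaka-Krein theorem the endomorphism space $\Hom(u^{\otimes w}, u^{\otimes w})$ is spanned by the operators $T_{p}$ for $p \in \CC(w, w)$. Since $\CC$ is noncrossing and $N \geqslant 4$, Proposition \ref{prop:linearindependence} tells us that these operators are linearly independent. Combined with the counting Lemma \ref{lem:counting}, this yields
\begin{equation*}
\dim \Hom(u^{\otimes w}, u^{\otimes w}) = \vert \CC(w, w)\vert = \sum_{p \in E_{w}(\CC)} n_{w}(p)^{2}.
\end{equation*}

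On the other hand, equation \eqref{eq:classdecompostion} together with the relation $u_{[p]_{w}} \sim \nu_{w}(p) u_{p}$ gives a decomposition
\begin{equation*}
u^{\otimes w} \sim \bigoplus_{p \in E_{w}(\CC)} \nu_{w}(p) u_{p}.
\end{equation*}
Since the representations $u_{p}$ for $p$ ranging over $E_{w}(\CC)$ are pairwise inequivalent irreducibles (two $u_{p}, u_{q}$ being equivalent exactly when $p \sim q$), Schur's lemma implies
\begin{equation*}
\dim \Hom(u^{\otimes w}, u^{\otimes w}) = \sum_{p \in E_{w}(\CC)} \nu_{w}(p)^{2}.
\end{equation*}

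Comparing the two computations we get $\sum_{p \in E_{w}(\CC)} \nu_{w}(p)^{2} = \sum_{p \in E_{w}(\CC)} n_{w}(p)^{2}$. But $0 \leqslant \nu_{w}(p) \leqslant n_{w}(p)$ for every $p$, hence term-by-term the inequality $\nu_{w}(p)^{2} \leqslant n_{w}(p)^{2}$ holds, and the equality of the two sums forces equality for each term. Therefore $\nu_{w}(p) = n_{w}(p)$ for all $p \in \Proj(w)$, as required. There is no real obstacle here: the only subtle point is ensuring that the linear independence hypothesis of Proposition \ref{prop:linearindependence} is available (which is why noncrossingness and $N \geqslant 4$ are needed), everything else being a clean dimension count.
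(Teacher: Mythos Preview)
Your proof is correct and follows essentially the same approach as the paper: both compute $\dim \Hom(u^{\otimes w}, u^{\otimes w})$ in two ways (once via linear independence of the $T_{p}$'s and Lemma~\ref{lem:counting}, once via the decomposition \eqref{eq:classdecompostion} and Schur orthogonality) and conclude from $\nu_{w}(p)\leqslant n_{w}(p)$. The only cosmetic difference is that the paper phrases the second computation in terms of $\Aut(u_{[p]_{w}})\simeq M_{\nu_{w}(p)}(\C)$ rather than invoking Schur's lemma explicitly.
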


\begin{proof}
Let us denote by $\Aut(v)$ the set of self-interwiners of a representation $v$. Equation \eqref{eq:classdecompostion} yields
\begin{equation}\label{eq:dimensions}
\dim(\Aut(u^{\otimes w})) = \sum_{p\in E_{w}(\CC)}\dim(\Aut(u_{[p]_{w}})).
\end{equation}
The space $\Aut(u^{\otimes w})$ is known to be generated by the maps $T_{p}$ for $p\in \CC(w, w)$. Moreover, the fact that $\CC$ is noncrossing and that $N\geqslant 4$ imply that the maps $T_{p}$ are linearly independent by Prop \ref{prop:linearindependence}. Thus, the left-hand side of Equation \eqref{eq:dimensions} is equal to $\vert \CC(w, w)\vert$.
On the other hand, using the isomorphism
\begin{equation*}
\Aut(u_{[p]_{w}}) = \Aut(\nu_{w}(p)u_{p}) \simeq M_{\nu_{w}(p)}(\C),
\end{equation*}
we see that the right-hand side is equal to $\sum_{p\in E_{w}(\CC)}\nu_{w}(p)^{2}$. Combining these facts and Lemma \ref{lem:counting}, we have
\begin{equation*}
\sum_{p\in E_{w}(\CC)}n_{w}(p)^{2} = \sum_{p\in E_{w}(\CC)}\nu_{w}(p)^{2}.
\end{equation*}
Since $\nu_{k}(p)\leqslant n_{k}(p)$ for all $p$, we must have equality.
\end{proof}

This result can be restated in the following way : in the noncrossing case, no projective partition is redundant. This yields the following refinement of \cite[Thm 6.5]{freslon2013representation}.

\begin{prop}\label{prop:directsumdecomposition}
Let $N\geqslant 4$ be an integer, let $\CC$ be a category of noncrossing partitions and let $\G$ be the associated easy unitary quantum group. Then, for any $w\in F$, we have
\begin{equation*}
u^{\otimes w} = \bigoplus_{p\in \Proj(w)}u_{p}
\end{equation*}
\end{prop}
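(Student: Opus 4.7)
The plan is simply to stitch together the two results already established in this subsection. Equation \eqref{eq:classdecompostion} provides an \emph{orthogonal} decomposition
\begin{equation*}
u^{\otimes w} = \bigoplus_{p\in E_{w}(\CC)}u_{[p]_{w}},
\end{equation*}
and the preceding lemma upgrades this by identifying $u_{[p]_{w}}$ with $n_{w}(p)$ copies of the irreducible $u_{p}$, since $\nu_{w}(p) = n_{w}(p)$.

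Next I would identify those $n_{w}(p)$ copies concretely inside $u_{[p]_{w}}$. For each $q\in [p]_{w}$ the projection $P_{q}$ yields a subrepresentation $u_{q}\subset u_{[p]_{w}}$ which is unitarily equivalent to $u_{p}$ (because $q\sim p$), so in particular $P_{q}$ has rank $\dim u_{p}$. Using $\vert [p]_{w}\vert = n_{w}(p)$ together with the previous lemma, one gets
\begin{equation*}
\sum_{q\in [p]_{w}}\dim u_{q} = n_{w}(p)\cdot \dim u_{p} = \dim u_{[p]_{w}} = \dim\left(\bigvee_{q\in [p]_{w}}P_{q}\right),
\end{equation*}
which forces the ranges of the $P_{q}$, $q\in [p]_{w}$, to be in direct sum, even though these projections are in general \emph{not} mutually orthogonal. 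Consequently
\begin{equation*}
u_{[p]_{w}} = \bigoplus_{q\in [p]_{w}}u_{q},
\end{equation*}
and summing over $p\in E_{w}(\CC)$ recovers exactly $\bigoplus_{p\in \Proj(w)}u_{p}$, as required.

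The main subtlety, which is entirely handled by the preceding lemma, lies in this last inner decomposition: orthogonality between ranges of $P_{q}$ holds between distinct equivalence classes (by the colored version of \cite[Prop 4.23]{freslon2013representation}) but fails inside a single class, so the direct sum statement within a class cannot be read off from orthogonality and really does require the rank/dimension count supplied by the equality $\nu_{w}(p) = n_{w}(p)$.
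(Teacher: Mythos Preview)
Your argument is correct and matches the paper's approach exactly: the paper presents this proposition precisely as a restatement of the preceding lemma, observing that $\nu_{w}(p)=n_{w}(p)$ forces the ranges of the $P_{q}$, $q\in[p]_{w}$, to be in direct sum even though they are not pairwise orthogonal, and then combines this with the orthogonal decomposition~\eqref{eq:classdecompostion}. Your dimension count making this inference explicit is the natural way to unpack that sentence.
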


\section{Free fusion semirings}\label{sec:free}

\subsection{The capping technique}

This section contains our main results. The proofs are purely combinatorial and heavily rely on the manipulation of partitions using the category operations. In particular, we will use the so-called \emph{capping technique} : given a colored partition $p\in \CC(k, l)$, we may use a one-line partition $b\in \CC(m, 0)$ to produce a new partition $q\in \CC(k, l-m)$
\begin{equation*}
q = (\idpart^{\otimes j_{1}}\otimes b\otimes \idpart^{\otimes j_{2}})p
\end{equation*}
if the colorings fit (and $j_{1} + j_{2} + m = l$). Concretely, this process reduces the partition $p$ by collapsing several neighboring points. Let us express this in another way. Let $p$ be a partition and let $k_{1}, k_{1}+1, \dots, k_{2}$ be a sequence of neighboring points in $p$ such that the one-block partition $b\in \NCc(0, k_{2}-k_{1}+1)$, with the same coloring as the corresponding points of $p$, is in $\CC$. Then, the partition $q$ obtained by removing the points $k_{1}, k_{1}+1, \dots, k_{2}$ and linking all the blocks to which they belong is in $\CC$. If the partition $p$ is symmetric, we may do a \emph{symmetric capping} by capping with the same block on both rows. The following fact is crucial and will be used all over the paper.

\begin{lem}\label{lem:cappingprojective}
Let $\CC$ be a category of noncrossing partitions and let $p\in \CC$ be a projective partition. Then, any projective partition $q$ obtained from $p$ by symmetric capping \emph{and such that $t(q) = t(p)$} is equivalent to $p$.
\end{lem}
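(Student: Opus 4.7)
The plan is to produce the witness for $p \sim q$ from the through-block decomposition. Write $q = C p C^*$ with $C = \idpart^{\otimes j_1}\otimes b\otimes \idpart^{\otimes j_2}$ and $b\in\CC(m,0)$ the one-block partition performing the cap, and let $p = p_u^* p_u$ be the through-block decomposition of $p$. I will show that the noncrossing partition $s := p_u C^*$ is itself a building partition. Once this is done, the calculation
\begin{equation*}
s^*s = (p_uC^*)^*(p_uC^*) = C p_u^* p_u C^* = C p C^* = q,
\end{equation*}
combined with the uniqueness of the through-block decomposition, forces $s = q_u$, and therefore
\begin{equation*}
r^p_q = q_u^* p_u = (p_u C^*)^* p_u = C p_u^* p_u = C p \in \CC,
\end{equation*}
which is exactly the criterion for $p \sim q$.

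The real work is thus to verify the building axioms for $s$. Its lower row coincides with the lower row of $p_u$, so these points are all white, and in the noncrossing setting conditions (1) and (2) imply condition (3) on $\min_{\text{up}}$ automatically, since any violation would produce a crossing between two blocks of $s$. The effect of composing with $C^*$ is that its single $b^*$-block fuses every block of $p_u$ whose upper portion meets the cap columns into a single merged block of $s$, while all other blocks of $p_u$ persist untouched. Writing $a$ for the number of through-blocks of $p$ meeting the cap columns and $N_1,\dots,N_n$ for the upper non-through-blocks of $p_u$ doing likewise, the merged block of $s$ contains $a$ lower points (one per involved through-block) together with the non-cap-column upper points inherited from the $a+n$ merged blocks.

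The hypothesis $t(q) = t(p)$ is what controls this merging, and the main obstacle is translating it into the two precise geometric statements needed. A parallel bookkeeping on $q = CpC^*$ shows that the $a$ through-blocks of $p$ meeting the cap fuse in $q$ into a single block, which is a through-block of $q$ when it retains a non-cap upper point (equivalently, by symmetry, a non-cap lower point) and collapses to a loop otherwise, yielding
\begin{equation*}
t(q) = \begin{cases} t(p) & \text{if } a = 0, \\ t(p)-a+1 & \text{if } a \geq 1 \text{ and the block survives}, \\ t(p)-a & \text{if } a \geq 1 \text{ and it collapses.} \end{cases}
\end{equation*}
The equality $t(q) = t(p)$ therefore forces $a \leq 1$, and in the subcase $a = 1$ further forces the merged block in $q$ to survive. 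The first of these delivers condition (1) for $s$ (the merged block contains at most one lower point of $p_u$), and the second delivers condition (2) in the only non-trivial case (it ensures that the merged block in $s$ retains at least one non-cap-column upper point). With both building conditions thus established, $s = q_u$ and the argument closes.
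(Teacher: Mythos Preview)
Your proof is correct and follows the same route as the paper's. The paper's argument is the two-line observation that the partition $r = Cp$ obtained by capping only the lower row of $p$ lies in $\CC$ and equals $r^{p}_{q}$; you have unpacked the implicit identity $r^{p}_{q} = Cp$ by showing directly that $q_{u} = p_{u}C^{*}$, verifying the building-partition axioms for $s = p_{u}C^{*}$ and invoking uniqueness of the through-block decomposition. Your case analysis on $a$ via the through-block count of $q$ is exactly what makes the hypothesis $t(q) = t(p)$ do its work, and is the honest content behind the paper's one-line assertion.
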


\begin{proof}
Let $b$ be the partition used for the capping and let $r$ be the partition obtained by capping only the lower row of $p$. By definition, $rr^{*} = q$ and we have to prove that $r^{*}r = p$. Consider two upper points which are not connected in $p$. If at least one of these points is not connected to a lower point in $p$, then the two points are not connected in $r^{*}r$. Assume now that both points are connected in $p$ to lower points and that they are connected in $r^{*}r$. Then, $b$ connects these two points so that they are already connected in $r$. This implies that $t(r) < t(p)$, which is impossible since $t(r) = t(rr^{*}) = t(q) = t(p)$. We have proved that two points which are not connected in $p$ are not connected in $rr^{*}$. It is clear that if two points are connected in $p$, they are still connected in $rr^{*}$. Since the coloring of $rr^{*}$ is the same as that of $p$, we have $rr^{*} = p$ and the proof is complete.
\end{proof}

Here is an example of equivalence produced by capping two pairs of black and white points :
\begin{center}
\begin{tikzpicture}[scale=0.5]
\draw (-2.5,-1) -- (-2.5,1);
\draw (-1,1) -- (-4,1);
\draw (-4,1) -- (-4,3);
\draw (-3,1) -- (-3,3);
\draw (-2,1) -- (-2,3);
\draw (-1,1) -- (-1,3);

\draw (2.5,-1) -- (2.5,1);
\draw (1,1) -- (4,1);
\draw (4,1) -- (4,3);
\draw (3,2) -- (3,3);
\draw (3,2) -- (2,2);
\draw (2,2) -- (2,3);
\draw (1,1) -- (1,3);

\draw (-1,-1) -- (-4,-1);
\draw (-4,-1) -- (-4,-3);
\draw (-3,-1) -- (-3,-3);
\draw (-2,-1) -- (-2,-3);
\draw (-1,-1) -- (-1,-3);

\draw (1,-1) -- (4,-1);
\draw (4,-1) -- (4,-3);
\draw (3,-2) -- (3,-3);
\draw (3,-2) -- (2,-2);
\draw (2,-2) -- (2,-3);
\draw (1,-1) -- (1,-3);

\draw (-4,3) node[above]{$\circ$};
\draw (-3,3) node[above]{$\bullet$};
\draw (-2,3) node[above]{$\bullet$};
\draw (-1,3) node[above]{$\circ$};
\draw (4,3) node[above]{$\circ$};
\draw (3,3) node[above]{$\circ$};
\draw (2,3) node[above]{$\circ$};
\draw (1,3) node[above]{$\bullet$};

\draw (-4,-3) node[below]{$\circ$};
\draw (-3,-3) node[below]{$\bullet$};
\draw (-2,-3) node[below]{$\bullet$};
\draw (-1,-3) node[below]{$\circ$};
\draw (4,-3) node[below]{$\circ$};
\draw (3,-3) node[below]{$\circ$};
\draw (2,-3) node[below]{$\circ$};
\draw (1,-3) node[below]{$\bullet$};
\draw (4.5,0) node[right]{$\sim$};
\end{tikzpicture}
\begin{tikzpicture}[scale=0.5]
\draw (-1.5,0) -- (-1.5,1);
\draw (-1,1) -- (-2,1);
\draw (-1,1) -- (-1,3);
\draw (-2,1) -- (-2,3);

\draw (1.5,0) -- (1.5,1);
\draw (1,1) -- (2,1);
\draw (1,1) -- (1,3);
\draw (2,1) -- (2,3);

\draw (-1.5,0) -- (-1.5,-1);
\draw (-1,-1) -- (-2,-1);
\draw (-1,-1) -- (-1,-3);
\draw (-2,-1) -- (-2,-3);

\draw (1.5,0) -- (1.5,-1);
\draw (1,-1) -- (2,-1);
\draw (1,-1) -- (1,-3);
\draw (2,-1) -- (2,-3);

\draw (-2,3) node[above]{$\bullet$};
\draw (-1,3) node[above]{$\circ$};
\draw (2,3) node[above]{$\circ$};
\draw (1,3) node[above]{$\circ$};

\draw (-2,-3) node[below]{$\bullet$};
\draw (-1,-3) node[below]{$\circ$};
\draw (2,-3) node[below]{$\circ$};
\draw (1,-3) node[below]{$\circ$};
\end{tikzpicture}
\end{center}

Note that if $p\in \CC(k, l)$ is any partition, then the points $k$ and $k+1$ of $\overline{p}\otimes p$  have different colors. We can therefore cap with a pair partition to cancel them. But then, the points $k-1$ and $k+1$ become neighbors and also have different colors, so that we can cap them again. Iterating this process, we see that we can cancel any partition of the form $\overline{p}\otimes p$ by repeated capping. We will now apply this to some general decomposition results for noncrossing partitions. Let us fix a category of noncrossing partitions $\CC$. Any projective noncrossing partition $A \in \Proj(k)$ with $t(A) = 1$ has the following form : there is a word $w = w_{0}\dots w_{n}\in \Z_{2}\ast\Z_{2}$ such that the upper part of the only through-block in $A$ has coloring $w$. Between the points colored by $w_{i}$ and $w_{i+1}$, there is a (possibly empty) partition $b_{i+1}\in \NCc(k_{i+1}, 0)$. Similarly, there are such partitions $b_{0}$ at the left of the point colored by $w_{0}$ and $b_{n+1}$ at the right of the point colored by $w_{n}$. Such a data will be symbolically written $A = [b_{0}, w_{0}, \dots, w_{n}, b_{n+1}]$ and completely characterizes the partition.

\begin{lem}\label{lem:cappingthroughblock}
Let $A = [b_{0}, w_{0}, \dots, w_{n}, b_{n+1}]$ be a projective partition as above. Then, $b_{i}^{*}b_{i}\in \CC$ for all $i$.
\end{lem}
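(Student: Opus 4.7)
The plan is to extract $b_i^{*}b_i$ from $A$ using only rotations and opposite-color pair cappings (both available in any $\CC$, the latter being the tool behind the $\overline{p}\otimes p$ cancellation discussed just before the lemma). The first step is to isolate $b_i$ on the upper row by rotation: rotating each upper point of $A$ lying to the left (resp.\ right) of $b_i$ one by one onto the left (resp.\ right) end of the lower row, we obtain a partition $A'\in\CC$ whose upper row is precisely $b_i$ and whose lower row, read left to right, takes the form $\overline{L}\,M\,\overline{R}$. Here $M$ is the original lower row of $A$ (still containing its $b_i$-piece in the original slot), while $\overline{L}$ and $\overline{R}$ come from the portions of $A$'s upper row on either side of $b_i$ by reversing the order of the points and flipping every color, with the block structure preserved.

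The second step is to peel the lower row of $A'$ back down to the $b_i$-piece of $M$ by iterated capping. The rightmost piece of $\overline{L}$ is a reversed and color-flipped copy of $b_0$ sitting immediately to the left of the $b_0$-piece of $M$; together they form a local copy of the $\overline{p}\otimes p$ pattern with $p=b_0$, and by the cancellation technique just established repeated pair-cappings erase them both. After that, the two points newly brought into contact at the left boundary are the through-block point $w_0$ (now leftmost of the remaining $M$) and its color-flipped image from $\overline{L}$: opposite-colored neighbors lying in the same (through-)block, so one further pair capping removes them. Alternating these two moves — cancel $b_j$ against its reversed copy, then $w_j$ against its flipped copy — I consume $\overline{L}$ together with the $b_0,w_0,\dots,w_{i-1}$ portion of $M$, and a symmetric sweep from the right handles $\overline{R}$ and the $w_i,\dots,b_{n+1}$ portion of $M$. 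What remains is $b_i$ on the upper row and the reflected $b_i$ on the lower row with no cross connections, i.e.\ precisely $b_i^{*}b_i$, which then lies in $\CC$ because it was produced from $A\in\CC$ by a finite sequence of category operations.

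The main obstacle is the bookkeeping in the second step: one has to verify at every successive iteration that the two newly adjacent points really have opposite colors, and that the blocks being merged by each pair capping are always either a fragment of a non-through $b_j$ (fully consumed within the corresponding $b_j$-subsweep) or a fragment of the unique through-block (fully consumed once all the $w_j$-pairs have been capped). Both properties follow directly from the explicit description of $\overline{L}$ and $\overline{R}$ as reversed color-flipped copies of $A$'s upper row, which guarantees that each freshly exposed pair at either boundary is automatically opposite-colored and that no spurious connection is ever created.
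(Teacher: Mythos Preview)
Your argument is correct and is essentially the paper's own proof, reorganized: the paper rotates the whole partition to one line, caps the innermost $\overline{b}_0\otimes b_0$ and then $\overline{w}_0,w_0$, rotates back to a smaller projective partition $[b_1,w_1,\dots,w_n,b_{n+1}]$, and iterates from both ends until only $[b_i,w_i]$ (or $[w_{i-1},b_i]$) remains, whereas you perform all the rotations first (isolating $b_i$ on the upper row) and then all the cappings on the lower row. The operations used and the order in which the pieces are cancelled are the same; your version simply avoids the intermediate ``rotate back'' steps.
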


\begin{proof}
Rotating $A$ on one line, we can cap $\overline{b}_{0}\otimes b_{0}$ to cancel it. Let $x\mapsto \overline{x}$ be the involution on $\Z_{2}$ exchanging $1$ and $-1$. Then, we get neighboring points with colors $\overline{w}_{0}$ and $w_{0}$, which we can cancel by capping again. Rotating back to get a symmetric partition, we have proven that $[b_{1}, w_{1}, \dots, w_{n}, b_{n}]\in \CC$. The same can be done on the right, and iterating this process we end up with $[b_{i}, w_{i}]\in \CC$ or $[w_{i-1}, b_{i}]\in \CC$. Rotating and capping yields $b_{i}^{*}b_{i}\in \CC$.
\end{proof}

A similar description can be given for a projective noncrossing partition $B\in \Proj(k)$ with $t(B) = 0$ and such that $1$ and $k$ belong to the same block. The coloring of the block containing $1$ can be written $w = w_{0}\dots w_{n}$ and between the points colored by $w_{i}$ and $w_{i+1}$ lies a partition $b_{i+1}\in \NCc(k_{i+1}, 0)$. Such a datum will be symbolically written $B = [w_{0}, b_{1}, \dots, b_{n}, w_{n}]$ and completely characterizes the partition.

\begin{lem}\label{lem:cappingnonthroughblock}
Let $B = [w_{0}, b_{1}, \dots, b_{n}, w_{n}]$ be a projective partition as above. Then, $b_{i}^{*}b_{i}\in \CC$ for all $i$.
\end{lem}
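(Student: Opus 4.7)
My plan is to reduce this statement to Lemma \ref{lem:cappingthroughblock} by producing an auxiliary projective partition $B'' \in \CC$ with $t(B'') = 1$ whose interior sub-partitions are exactly $b_1, \dots, b_n$. The guiding idea is that the only obstacle between the two lemmas is the fact that $B$ has two ``big'' non-through-blocks (the one in the upper row, and its mirror in the lower row) instead of one through-block; I want to rotate and cap in order to fuse these two blocks into a single through-block, and then appeal to the previous lemma directly.

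Concretely, the first move is to rotate the rightmost upper point of $B$ (colored $w_n$, at position $k$) to the right of the lower row, producing a partition $B' \in \CC(k-1,k+1)$. Because rotations preserve the block connections, the upper big block of $B$ loses its rightmost upper point but acquires the new lower point of flipped color $\overline{w}_n$, so it becomes a through-block of $B'$. Meanwhile the lower big block of $B$ is untouched and remains non-through, and the sub-partitions $b_1,\dots,b_n$ survive in both rows.

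The second move is to cap the two adjacent rightmost lower points of $B'$: the original lower $w_n$ (at position $k$) and the newly rotated $\overline{w}_n$ (at position $k+1$). These have opposite colors and belong to different blocks (the lower big block and the new through-block, respectively), and the pair partition of colors $w_n,\overline{w}_n$ in $\NCc(2,0)$ lies in $\CC$ as a rotation of the white identity. The cap removes those two points and merges the through-block with the lower big block. The resulting partition $B'' \in \CC(k-1,k-1)$ has $k-1$ upper and $k-1$ lower points, is symmetric (hence projective), carries exactly one through-block with coloring $w_0,\dots,w_{n-1}$, and still contains the sub-partitions $b_1,\dots,b_n$ in both rows. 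In the notation of Lemma \ref{lem:cappingthroughblock}, this precisely reads $B'' = [\emptyset, w_0, b_1, w_1, \dots, b_{n-1}, w_{n-1}, b_n]$.

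Applying Lemma \ref{lem:cappingthroughblock} to $B''$ then yields $b_i^* b_i \in \CC$ for every $i = 1,\dots,n$, which is exactly the conclusion. The main point to check carefully, and I expect the only delicate step, is that the capping really fuses precisely the two big blocks into a single through-block while leaving the $b_i$'s intact; this has to be verified by tracking the noncrossing block structure through the rotation and the vertical composition with the capping pair, but is forced by the fact that $B$ is noncrossing and the two big blocks are ``nested'' around the sub-partitions.
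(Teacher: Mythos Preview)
Your reduction is correct. Rotating the rightmost upper point of $B$ down and then capping the two adjacent lower points of opposite colors does exactly what you claim: it fuses the upper and lower ``big'' blocks into a single through-block while leaving the $b_i$'s untouched in both rows, and the resulting $B''$ is symmetric, hence projective, with the structure $[\emptyset, w_0, b_1, \dots, w_{n-1}, b_n]$. Lemma~\ref{lem:cappingthroughblock} then finishes the job.

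The paper takes a slightly different stance: rather than reducing to Lemma~\ref{lem:cappingthroughblock}, it simply observes that \emph{the same proof} works verbatim. That is, one rotates all of $B$ onto one line, obtaining $\overline{w}_n, \overline{b}_n, \dots, \overline{w}_0, w_0, b_1, \dots, w_n$, and then caps from the middle (first the adjacent $\overline{w}_0, w_0$, then $\overline{b}_1\otimes b_1$, etc.) to peel off pieces until $b_i^*b_i$ is isolated. So the paper re-runs the peeling argument in parallel, whereas you perform a two-step surgery to literally invoke the previous lemma as a black box. Your route is marginally more economical in that it avoids repeating any part of the argument; the paper's route makes explicit that no new idea is needed. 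Both rely on the same underlying mechanism (rotate, then cap pairs of opposite colors), so the difference is organizational rather than mathematical.
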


\begin{proof}
This is exactly the same proof as for Lemma \ref{lem:cappingthroughblock}.
\end{proof}

Using this, we can give a general decomposition result for projective noncrossing partitions.

\begin{lem}\label{lem:decomposition}
Let $\CC$ be a category of noncrossing partitions and let $p\in \CC$ be a projective partition. Then, $p$ can be (not uniquely) written as
\begin{equation*}
p = B_{0}\otimes A_{1}\otimes B_{1}\otimes A_{2}\otimes B_{2}\otimes \dots \otimes B_{t(p)-1}\otimes A_{t(p)}\otimes B_{t(p)},
\end{equation*}
where
\begin{enumerate}
\item $A_{i}$ and $B_{i}$ are projective noncrossing partitions for all $i$.
\item $t(A_{i}) = 1$ for all $i$.
\item $t(B_{i}) = 0$ for all $i$.
\end{enumerate}
Moreover, $A_{i}$ and $B_{i}$ belong to $\CC$ (note that $B_{i}$ may be empty).
\end{lem}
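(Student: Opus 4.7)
The plan is to prove the decomposition in two stages: first at the level of partitions, and then to show each piece actually belongs to $\CC$.

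I would begin with a structural analysis of noncrossing projective partitions. My first claim is that every through-block $T$ of $p$ is self-paired under reflection, i.e.\ $c_{\text{up}} \in T$ if and only if $c_{\text{low}} \in T$. Otherwise the reflected block $T^{*}$ would be a distinct through-block (also lying in $p$ since $p = p^{*}$), and picking some $c$ with $c_{\text{up}} \in T$, $c_{\text{low}} \notin T$ together with any $d$ with $d_{\text{low}} \in T$, the four points $c_{\text{up}}, d_{\text{up}}, d_{\text{low}}, c_{\text{low}}$ would interleave $T$ and $T^{*}$ in the boundary order, contradicting noncrossingness. A similar crossing argument shows that the column sets $C(T) = \{c : c_{\text{up}} \in T\}$ of distinct through-blocks are interval-disjoint. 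Ordering the through-blocks $T_{1}, \dots, T_{t(p)}$ from left to right, I would then define $A_{i}$ as the restriction of $p$ to the columns $[\min C(T_{i}), \max C(T_{i})]$ (containing $T_{i}$ together with its internal non-through ``pockets'') and $B_{i}$ as the restriction to the gap columns between $T_{i}$ and $T_{i+1}$, with $B_{0}$ and $B_{t(p)}$ the extremal gaps. A final crossing check rules out non-through blocks straddling any of these interval boundaries (such a block would cross the neighboring through-block), yielding the partition-level identity $p = B_{0} \otimes A_{1} \otimes \dots \otimes B_{t(p)}$ with $t(A_{i}) = 1$ and $t(B_{i}) = 0$.

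The more delicate part is showing that each $A_{i}, B_{i}$ belongs to $\CC$. The key claim would be: if $\pi = q \otimes r \in \CC$ with $q, r$ projective noncrossing, then $q \in \CC$ and $r \in \CC$. To extract $q$, my plan is to apply successive right-rotations of $\pi$ moving each upper point of $r$ to the rightmost end of the lower row; each such rotation is a category operation and so $\pi$ stays in $\CC$. The resulting partition $\pi'$ has upper row equal to that of $q$ and lower row equal, from left to right, to the lower row of $q$, the lower row of $r$, and the rotated upper row of $r$ (in reversed order with flipped colors). At the junction between the rightmost point of $r$'s lower row and the leftmost point of the rotated segment, the two adjacent points correspond to the same-indexed lower and upper points of $r$; by projectivity of $r$ they carry identical colors before rotation, hence opposite colors afterwards. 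I then cap them off using the opposite-color pair, which always lies in $\CC$ as a rotation of $\idpart$. The same color-matching holds at each new junction, so iterating the cap removes all lower-row points coming from $r$ and leaves a partition whose upper and lower rows agree with those of $q$. Since all cappings take place inside the $r$-region and the blocks of $q$ are disjoint from those of $r$, the resulting partition is exactly $q$, proving $q \in \CC$; a symmetric argument with left-rotations yields $r \in \CC$. Iterating this key claim on $p = B_{0} \otimes (A_{1} \otimes B_{1} \otimes \dots \otimes B_{t(p)})$ and on its successive right factors extracts each $A_{i}, B_{i}$ as a member of $\CC$.

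The main obstacle is the iterative capping argument: one has to verify that at every stage the two junction points really do have opposite colors (a consequence of projectivity of $r$ combined with the color flip induced by each rotation) and that the cumulative block mergers inside the $r$-region never reach back into $q$'s portion of the partition (a direct consequence of the tensor-product structure). Once these bookkeeping points are in hand, the only category operations needed are rotations and caps with the opposite-color pair, both of which lie unconditionally in $\CC$.
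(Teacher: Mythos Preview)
Your proof is correct and uses essentially the same technique as the paper: the structural decomposition is what the paper dismisses as ``clear from noncrossingness,'' and your key extraction claim (``$q\otimes r\in\CC$ with $q,r$ projective $\Rightarrow q,r\in\CC$'') is exactly the paper's rotate-and-pair-cap peeling argument, just packaged as a standalone lemma rather than applied directly to the outermost factors $B_{0}, B_{t(p)}, A_{1}, A_{t(p)},\dots$. Your presentation is in fact more explicit than the paper's about the color-matching at each capping step and about why the caps never touch the $q$-region.
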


\begin{proof}
The existence is clear from noncrossingness and we will simply prove that the building blocks belong to $\CC$. Rotating the lower part of $B_{0}$ on the left and capping, we see as in the proof of Lemma \ref{lem:cappingthroughblock} that we can remove $B_{0}$ without leaving the category of partitions $\CC$. The same can be done for $B_{t(p)}$ by rotating it on the right and capping with a pair partition. After canceling such a partition, we can use the same rotating and capping technique to cancel $A_{1}$ or $A_{t(p)}$. It is now straightforward by induction that we can cancel $B_{0}, A_{1}, \dots, B_{i-2}, A_{i-1}$ and $B_{t(p)}, A_{t(p)}, \dots B_{i+1}, A_{i+1}$ without leaving $\CC$, i.e. $A_{i}, B_{i}\in \CC$ for all $i$.
\end{proof}

Let us say that a projective partition $A = [w_{0}, b_{1}, \dots, b_{n}, w_{n}]$ with $t(A) = 1$ as above is \emph{one-block} if $b_{i} = \emptyset$ for all $i$. One-block projective partitions are in fact enough to describe projective partitions.

\begin{prop}\label{prop:generators}
Let $A = [w_{0}, b_{1}, \dots, b_{n}, w_{n}] \in \Proj(k)$ be a projective partition such that $t(A) = 1$. Then, there exists a one-block projective partition $\widehat{A}$ and projective partitions $B, B'$ with $t(B) = 0 = t(B')$ such that $A \sim B\otimes \widehat{A}\otimes B$.
\end{prop}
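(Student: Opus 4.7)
The plan is to prove the equivalence $A \sim B \otimes \widehat{A}$ by transitivity through the common reduct $\widehat{A}$. I define $\widehat{A}$ to be the elementary projective partition $[\emptyset, w_{0}, \emptyset, w_{1}, \ldots, \emptyset, w_{n}, \emptyset]$ consisting only of the through-block of $A$, and set $B = \bigotimes_{i} b_{i}^{*} b_{i}$, the horizontal concatenation of the symmetric doublings of the non-through-blocks $b_{i}$ of $A$. By Lemma \ref{lem:cappingthroughblock}, each $b_{i}^{*} b_{i}$ lies in $\CC$, hence so does $B$.

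The first step is to show $A \sim \widehat{A}$ via iterated symmetric capping. For each non-through-block $b_{i}$, rotating $b_{i}^{*} b_{i} \in \CC$ to single-row form yields an element of the type $b_{i} \otimes \overline{b_{i}}$ in $\CC$ which serves as a valid cap on $2|b_{i}|$ points. After using a rotation of $A$ to bring the two mirror-symmetric copies of $b_{i}$ on the upper and lower rows into adjacency, such a cap deletes both copies while leaving the through-block of $A$ intact, so $t = 1$ is preserved. By Lemma \ref{lem:cappingprojective}, each step is an equivalence, and chaining over all non-through-blocks produces $A \sim \widehat{A}$. In particular, this shows $\widehat{A} \in \CC$, and therefore $B \otimes \widehat{A} \in \CC$ as well.

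The second step is the analogous reduction $B \otimes \widehat{A} \sim \widehat{A}$, obtained by symmetrically capping out each factor $b_{i}^{*} b_{i}$ of $B$ in turn. Since the through-block of $\widehat{A}$ sits untouched to the right of $B$, each cap preserves $t = 1$, and Lemma \ref{lem:cappingprojective} again gives an equivalence at every step; chaining produces $B \otimes \widehat{A} \sim \widehat{A}$. Combining the two reductions, transitivity of equivalence yields $A \sim \widehat{A} \sim B \otimes \widehat{A}$, which is the desired statement.

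The main obstacle is the careful realization of each symmetric capping as an operation performed entirely within $\CC$; this relies on the rotation and capping manipulations already developed in the proof of Lemma \ref{lem:cappingthroughblock}, applied to each $b_{i}^{*} b_{i} \in \CC$ individually. An alternative formulation of the same idea would construct the intertwiner $r = (B \otimes \widehat{A})_{u}^{*} A_{u}$ directly; one can check that $r$ is noncrossing because its main block groups all through-block points from both partitions into a single polygon on the circle, inside whose arcs the non-through-block structures nest harmlessly, but proving $r \in \CC$ still reduces to the same capping argument.
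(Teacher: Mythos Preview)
Your proof has a genuine gap: the intermediate equivalences $A \sim \widehat{A}$ and $B\otimes\widehat{A} \sim \widehat{A}$ are false in general, so the transitivity argument collapses.

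Here is a concrete counterexample. Take any category $\CC$ with $\beta_{1}\in\CC$ but $\theta_{1}\notin\CC$ (such categories exist whenever $\mathcal{G}(\CC)$ is nontrivial; see Section~\ref{sec:one}). Set $A=\beta_{1}\otimes\pi_{1}$, so that $b_{0}=\theta_{1}$, $\widehat{A}=\pi_{1}$, and $B=\beta_{1}$. One computes $r^{\widehat{A}}_{A}=(\pi_{1})_{u}^{*}A_{u}=\theta_{1}\otimes\pi_{1}$. Rotating the lower point up and capping the resulting $\circ\bullet$ pair shows that $\theta_{1}\otimes\pi_{1}\in\CC$ would force $\theta_{1}\in\CC$, a contradiction. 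Hence $A\not\sim\widehat{A}$, and for the same reason $B\otimes\widehat{A}=A\not\sim\widehat{A}$. (Of course $A\sim B\otimes\widehat{A}$ holds here, trivially; it is only your route to it that fails.)

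The flaw is in your appeal to Lemma~\ref{lem:cappingprojective}. That lemma concerns \emph{symmetric} capping: the same one-line cap applied independently to each row. Your procedure instead rotates one row onto the other and caps the resulting $\overline{b_{i}}\otimes b_{i}$ configuration with pairs. This is a legitimate operation inside $\CC$, so it shows $\widehat{A}\in\CC$, but it is \emph{not} a symmetric capping of $A$, and the proof of Lemma~\ref{lem:cappingprojective} (which produces $r^{q}_{p}$ by capping only the lower row) does not go through. To cap only the lower $b_{i}$ while leaving the upper copy intact you would need $b_{i}$ itself (or an equivalent one-row partition) in $\CC$, which is exactly what fails when $\CC$ is not block-stable.

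The paper's argument avoids this by never separating $A$ from its non-through-blocks. After first reducing to the unicoloured case by pair-capping, it composes $A$ with $\beta\otimes\idpart^{\otimes(k-l)}$ where $\beta=b_{0}^{*}b_{0}$; this composition lies in $\CC$ and is checked to equal $r^{\beta\otimes A'}_{A}$ for an $A'$ with one fewer nested $b_{i}$. Thus each step \emph{extracts} a factor $b_{i}^{*}b_{i}$ to the left rather than deleting it, and induction gives $A\sim B\otimes\widehat{A}$ directly.
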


\begin{proof}
If $A$ contains the same number of black and white points, it is equivalent (by capping symmetrically all the points but four) to $[\circ, \bullet]$ or to $[\bullet, \circ]$ and the result is clear. Otherwise, we can cap neighbouring points of different colors in order to get an equivalent projective partition with all points of the same color by Lemma \ref{lem:cappingprojective}. The only potential trouble is when capping points of the through-block with singletons since this removes one point of the through-block. It is however clear that in the worst case, there will only be one point left in the through-block and the result is obvious in that case. We will thus assume from now on the points to be white, the case of black points being similar. Note that, again because of singletons, the previous capping may have add non-through-block partitions on the left and on the right of the through-block but that it is of course of no consequence on the result.

Set $\beta = b_{1}^{*}b_{1}\in \Proj(l)$ (it is in $\CC$ by Lemma \ref{lem:cappingthroughblock}), let $\idpart$ denote the white identity partition and set
\begin{equation*}
R = (\beta\otimes \idpart^{\otimes (k-l)})A \in \CC.
\end{equation*}
Note that $R$ gives an equivalence between $A$ and $RR^{*}$ by Lemma \ref{lem:cappingthroughblock}. Let us number the lower points of the partitions by integers starting from the left. Then, two points $i, j\leqslant l$ are connected in $R_{l}$ if and only if they are connected in $b_{1}^{*}$ and two points $i, j \geqslant l+2$ are connected in $R_{l}$ if and only if they are connected in $A_{l}$. Moreover, the first $l$ points cannot be connected to any of the last $k-(l+1)$ points. We therefore only have to look at the point $l+1$.
\begin{itemize}
\item If in the composition defining $R$, $b_{1}$ connects $l+1$ to $1$, then $l+1$ is connected to $l+2$ in $R$ and $RR^{*} = \beta\otimes[\circ, \circ, b_{2}, \dots, \circ]$.
\item If in the composition defining $R$, $\beta$ does not connect $l+1$ to $1$, then 
$l+1$ becomes a singleton in $R_{l}$, so that $RR^{*} = \beta\otimes \beta_{1}\otimes [\circ, b_{2}, \dots, \circ]$.
\end{itemize}
We can now prove the result by induction on $n(A)$, the integer such that the through-block of $A$ has $n(A)+1$ points on each row. If $n(A)=0$, then $A$ is already one-block. If $n(A)\geqslant 1$, we have two possibilities :
\begin{itemize}
\item $A\sim B\otimes [\circ, b_{2}, \dots, \circ]$. Since $n([\circ, b_{2}, \dots, \circ]) = n(A)-1$, we can apply the induction hypothesis to $[\circ, b_{2}, \dots, \circ]$, giving the result.
\item $A\sim B\otimes [\circ, \circ, b_{2}, \dots, \circ]$ with $B\in \Proj(l)$. Let $C = [\circ, b_{2}, \dots, \circ]$. By induction, $C\sim B'\otimes \widehat{C}$. Let $D$ be the partition implementing this equivalence and consider the partition
\begin{equation*}
R' = (\idpart^{\otimes l+1}\otimes D)(B\otimes [\circ, \circ, b_{2}, \dots, \circ]).
\end{equation*}
This is an equivalence between $B\otimes [\circ, \circ, b_{2}, \dots, \circ]$ and $[\circ, B', \circ, \circ, \dots, \circ]$. Applying again our construction, we can get $B'$ out of the through-block so that $A\sim B\otimes B'\otimes [\circ, \circ, \dots, \circ]$.
\end{itemize}
\end{proof}

Here is an instance of such an equivalence :
\begin{center}
\begin{tikzpicture}[scale=0.5]
\draw (0,0) -- (0, 1);
\draw (-5,1) -- (5,1);
\draw (-5,1) -- (-5, 3);
\draw (-4,2) -- (-4, 3);
\draw (-3,2) -- (-3, 3);
\draw (-2,2) -- (-2, 3);
\draw (-4,2) -- (-2, 2);
\draw (-1,1) -- (-1, 3);
\draw (5,1) -- (5, 3);
\draw (4,2) -- (4, 3);
\draw (2,2) -- (2, 3);
\draw (4,2) -- (2, 2);
\draw (1,1) -- (1, 3);

\draw (0,0) -- (0,-1);
\draw (-5,-1) -- (5,-1);
\draw (-5,-1) -- (-5, -3);
\draw (-4,-2) -- (-4, -3);
\draw (-3,-2) -- (-3, -3);
\draw (-2,-2) -- (-2, -3);
\draw (-4,-2) -- (-2, -2);
\draw (-1,-1) -- (-1, -3);
\draw (5,-1) -- (5, -3);
\draw (4,-2) -- (4, -3);
\draw (2,-2) -- (2, -3);
\draw (4,-2) -- (2, -2);
\draw (1,-1) -- (1, -3);

\draw (-5,3) node[above]{$\circ$};
\draw (-4,3) node[above]{$\circ$};
\draw (-3,3) node[above]{$\circ$};
\draw (-2,3) node[above]{$\circ$};
\draw (-1,3) node[above]{$\circ$};
\draw (5,3) node[above]{$\circ$};
\draw (4,3) node[above]{$\circ$};
\draw (3,3) node[above]{$\circ$};
\draw (2,3) node[above]{$\circ$};
\draw (1,3) node[above]{$\circ$};

\draw (-5,-3) node[below]{$\circ$};
\draw (-4,-3) node[below]{$\circ$};
\draw (-3,-3) node[below]{$\circ$};
\draw (-2,-3) node[below]{$\circ$};
\draw (-1,-3) node[below]{$\circ$};
\draw (5,-3) node[below]{$\circ$};
\draw (4,-3) node[below]{$\circ$};
\draw (3,-3) node[below]{$\circ$};
\draw (2,-3) node[below]{$\circ$};
\draw (1,-3) node[below]{$\circ$};

\draw (5.5,0) node[right]{$\sim$};
\end{tikzpicture}
\begin{tikzpicture}[scale=0.5]
\draw (4.5,0) -- (4.5, 1);
\draw (3,1) -- (5,1);
\draw (-2,2) -- (1, 2);
\draw (-3,2) -- (-5, 2);
\draw (-3,2) -- (-3, 3);
\draw (-4,2) -- (-4, 3);
\draw (-5,2) -- (-5, 3);
\draw (-2,2) -- (-2, 3);
\draw (1,2) -- (1, 3);
\draw (3,1) -- (3, 3);
\draw (4,1) -- (4, 3);
\draw (5,1) -- (5, 3);

\draw (4.5,0) -- (4.5, -1);
\draw (3,-1) -- (5,-1);
\draw (-2,-2) -- (1, -2);
\draw (-3,-2) -- (-5, -2);
\draw (-3,-2) -- (-3, -3);
\draw (-4,-2) -- (-4, -3);
\draw (-5,-2) -- (-5, -3);
\draw (-2,-2) -- (-2, -3);
\draw (1,-2) -- (1, -3);
\draw (3,-1) -- (3, -3);
\draw (4,-1) -- (4, -3);
\draw (5,-1) -- (5, -3);

\draw (-5,3) node[above]{$\circ$};
\draw (-4,3) node[above]{$\circ$};
\draw (-3,3) node[above]{$\circ$};
\draw (-2,3) node[above]{$\circ$};
\draw (-1,3) node[above]{$\circ$};
\draw (5,3) node[above]{$\circ$};
\draw (4,3) node[above]{$\circ$};
\draw (3,3) node[above]{$\circ$};
\draw (2,3) node[above]{$\circ$};
\draw (1,3) node[above]{$\circ$};

\draw (-5,-3) node[below]{$\circ$};
\draw (-4,-3) node[below]{$\circ$};
\draw (-3,-3) node[below]{$\circ$};
\draw (-2,-3) node[below]{$\circ$};
\draw (-1,-3) node[below]{$\circ$};
\draw (5,-3) node[below]{$\circ$};
\draw (4,-3) node[below]{$\circ$};
\draw (3,-3) node[below]{$\circ$};
\draw (2,-3) node[below]{$\circ$};
\draw (1,-3) node[below]{$\circ$};
\end{tikzpicture}
\end{center}

Let us give a corollary summarizing the results of this section.

\begin{cor}\label{cor:generators}
Let $p\in \Proj$ be a projective partition. Then, there are one-block partitions $A_{1}, \dots, A_{t(p)}\in \CC$ and projective partitions $B_{0}, \dots, B_{t(p)}\in \CC$ with $t(B_{i}) = 0$ such that
\begin{equation*}
p \sim B_{0}\otimes A_{1}\otimes B_{1}\otimes \dots \otimes A_{t(p)}\otimes B_{t(p)}.
\end{equation*}
\end{cor}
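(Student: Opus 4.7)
The plan is simply to chain Lemma \ref{lem:decomposition} with Proposition \ref{prop:generators} and then absorb residual non-through-block factors. Lemma \ref{lem:decomposition} already produces a decomposition with the correct alternating pattern of through-block and non-through-block pieces; the only defect is that the through-block factors $A_{i}$ it produces are not necessarily elementary. Proposition \ref{prop:generators} is tailored to fix exactly this defect, one $A_{i}$ at a time, at the price of prepending a non-through-block projective partition. Merging those extra non-through-block pieces with the $B_{i}$'s already present yields the desired normal form.

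Concretely, I would first apply Lemma \ref{lem:decomposition} to write
\begin{equation*}
p = B_{0}\otimes A_{1}\otimes B_{1}\otimes \dots \otimes A_{t(p)}\otimes B_{t(p)},
\end{equation*}
with every building block in $\CC$, each $A_{i}$ projective with $t(A_{i}) = 1$, and each $B_{i}$ projective with $t(B_{i}) = 0$. For each $i$, Proposition \ref{prop:generators} then provides an elementary projective partition $\widehat{A}_{i}\in \CC$ and a non-through-block projective partition $B'_{i}\in \CC$ such that $A_{i}\sim B'_{i}\otimes \widehat{A}_{i}$. Substituting, one gets
\begin{equation*}
p \sim B_{0}\otimes B'_{1}\otimes \widehat{A}_{1}\otimes B_{1}\otimes B'_{2}\otimes \widehat{A}_{2}\otimes B_{2}\otimes \dots \otimes B'_{t(p)}\otimes \widehat{A}_{t(p)}\otimes B_{t(p)},
\end{equation*}
after which setting $\widetilde{B}_{0} = B_{0}\otimes B'_{1}$, $\widetilde{B}_{i} = B_{i}\otimes B'_{i+1}$ for $1\leqslant i\leqslant t(p)-1$, and $\widetilde{B}_{t(p)} = B_{t(p)}$, each $\widetilde{B}_{i}$ is a horizontal concatenation of non-through-block projective partitions in $\CC$, hence is itself a non-through-block projective partition in $\CC$; the decomposition then has exactly the required shape.

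The only point that genuinely needs to be checked is that the equivalences given by Proposition \ref{prop:generators} can be tensored together. If $r_{i}\in \CC$ witnesses $A_{i}\sim B'_{i}\otimes \widehat{A}_{i}$, meaning $r_{i}^{*}r_{i} = A_{i}$ and $r_{i}r_{i}^{*} = B'_{i}\otimes \widehat{A}_{i}$, then stability of $\CC$ under $\otimes$ shows that
\begin{equation*}
r := B_{0}\otimes r_{1}\otimes B_{1}\otimes r_{2}\otimes \dots \otimes r_{t(p)}\otimes B_{t(p)} \in \CC,
\end{equation*}
and the compatibility of horizontal concatenation with composition and adjunction immediately gives $r^{*}r = p$ and $rr^{*}$ equal to the right-hand side of the displayed equivalence. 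I expect no further obstacle: once this stacking of equivalences is granted, the remainder of the argument is purely bookkeeping in the monoid of partitions under $\otimes$.
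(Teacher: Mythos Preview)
Your proof is correct and is precisely the argument the paper intends: the corollary is stated there without proof, merely as ``a corollary summarizing the results of this section,'' and your chaining of Lemma~\ref{lem:decomposition} with Proposition~\ref{prop:generators}, followed by the absorption of the extra non-through-block factors into the $B_{i}$'s, is exactly the expected derivation. The only cosmetic point is that Proposition~\ref{prop:generators} does not literally assert $\widehat{A}_{i}, B'_{i}\in\CC$ individually, but this follows at once since $B'_{i}\otimes\widehat{A}_{i} = r_{i}r_{i}^{*}\in\CC$ and the rotating-and-capping argument of Lemma~\ref{lem:decomposition} separates the two factors inside $\CC$.
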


\subsection{Block-stability}

Our main concern in this subsection is to understand the operation of passing from a partition to a subpartition and in particular to a block. More precisely, we will see that the possibility of passing to blocks imposes strong conditions on a category of partitions. We sart with a natural definition :

\begin{de}
A category of partitions $\CC$ is said to be \emph{block-stable} if for any partition $p\in \CC$ and any block $b$ of $p$, we have $b\in \CC$.
\end{de}

Not all categories of partitions are block-stable (even not all categories of noncrossing partitions). In fact, Theorem \ref{thm:mainresult} gives a characterization of block-stable categories of noncrossing partitions. For simplicity, let us give a companion definition.

\begin{de}
Let $\CC$ be a fixed category of partitions. A partition $p\in \CC$ is said to be \emph{block-stable} if any block of $p$ is in $\CC$.
\end{de}

\begin{rem}
The notion of block stability makes no sense for a general partition $p$ if a category of partitions is not specified. In the sequel, the category which is referred to will always be clear. Note that a category of partitions $\CC$ is block-stable if and only if all its partitions are block-stable.
\end{rem}

Let us give an elementary property of block-stable noncrossing partitions.

\begin{lem}\label{lem:blockstablepartition}
Let $\CC$ be a category of partitions and let $p\in \CC$ be a block-stable \emph{noncrossing} partition with $t(p) = 0$. Then, $p_{u}$ and $p_{l}$ both belong to $\CC$.
\end{lem}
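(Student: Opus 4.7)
My plan is to prove $p_u \in \CC$ by induction on the number $s$ of lower-row blocks of $p$, progressively capping them off; the statement for $p_l$ will then follow by the same argument applied to $p^*$. Since $t(p) = 0$, every block of $p$ lies entirely in the upper or in the lower row, so $p_u$ is simply the upper row of $p$ carrying its inherited partition (with no lower points).

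The key combinatorial fact I would use is that any noncrossing partition on a single row contains an \emph{interval block}, i.e., a block of consecutive points: take a block $b$ minimizing $\max b - \min b$; if $b$ had an internal gap, by noncrossingness the block containing that gap would lie strictly between $\min b$ and $\max b$, contradicting minimality.

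If $s = 0$, then $p = p_u$ and there is nothing to show. Otherwise, pick an interval block $c$ of size $m$ in the lower row of $p$. Block-stability gives $c \in \CC$, and closure under the adjoint yields a one-line partition $c^* \in \CC(m,0)$ whose upper coloring matches the colors of the $m$ consecutive points of $c$ in $p$. The capping technique then produces
\begin{equation*}
q := (\idpart^{\otimes j_1} \otimes c^* \otimes \idpart^{\otimes j_2})\, p \in \CC,
\end{equation*}
where the identities are colored to match the remaining lower-row colors of $p$ (both the white and the black identity lie in $\CC$). The partition $q$ is noncrossing, has $t(q) = 0$, shares the same upper row as $p$, and is block-stable because its blocks are precisely those of $p$ other than $c$. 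Since $q$ has $s-1$ lower-row blocks, the induction hypothesis gives $p_u = q_u \in \CC$.

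For $p_l$, I apply the same argument to $p^* \in \CC$, which is noncrossing, block-stable (its blocks are adjoints of the blocks of $p$, and $\CC$ is closed under the adjoint), and satisfies $t(p^*)=0$; this yields $(p^*)_u \in \CC$. From $p = p_l^* p_u$ we get $p^* = p_u^* p_l$, and since $p_u$ and $p_l$ are trivially building partitions here (they have no lower points), the uniqueness clause in the through-block decomposition of $p^*$ forces $(p^*)_u = p_l$, so $p_l \in \CC$. The only minor obstacle is the coloring bookkeeping for the identities in the capping step, which is immediate because $\CC$ contains both colored identities.
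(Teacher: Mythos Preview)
Your proof is correct and follows essentially the same approach as the paper: iteratively cap off interval blocks of the lower (resp.\ upper) row, using block-stability to ensure each cap lies in $\CC$, until only $p_u$ (resp.\ $p_l$) remains. Your version is simply more explicit---you justify the existence of an interval block via a minimality argument, set up the induction formally, and handle the $p_l$ case via $p^*$ and uniqueness of the through-block decomposition---whereas the paper compresses all of this into one sentence appealing to noncrossingness.
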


\begin{proof}
First note that intervals of $p$, i.e. blocks of the form $\{i, i+1, \dots, i+l\}$ can be removed by capping since they belong to $\CC$ by assumption. This creates new intervals, which can also be removed. Because $p$ is noncrossing, iterating this process enables us to remove all of $p_{l}$ or all of $p_{u}$ without leaving $\CC$.
\end{proof}

Proposition \ref{prop:nontrivial1d} will prove crucial in our investigation of the link between block-stability and the representation theory of the associated easy quantum group. Before stating and proving it, we need a preparatory lemma.

\begin{lem}\label{lem:nontrivial1d}
Let $\CC$ be a category of noncrossing partitions and assume that it is \emph{not} block-stable. Then, there exists a partition $p\in \CC$ satisfying
\begin{enumerate}
\item $p$ is not block-stable.
\item $p$ is projective.
\item $t(p) = 0$.
\item $1$ and $k$ belong to the same block of $p$.
\end{enumerate}
\end{lem}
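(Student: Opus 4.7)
The plan is to take a witness $q$ to non-block-stability, flatten it to a one-row partition by rotation, then re-rotate it to a two-row partition $\h q$ with carefully chosen upper arc, and finally set $p = \h q^{*}\h q$.

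First, by a sequence of rotations (each of which is a category operation that preserves $\CC$-membership and the underlying block structure), I flatten $q$ to a partition in $\CC(n, 0)$ with all points on the upper row; the bad block $b$ survives intact. Then, by further rotations, I produce $\h q \in \CC(a, n-a)$ whose upper row corresponds to a chosen cyclic arc $A$ of the cyclic arrangement. The choice depends on $|b|$: if $|b| \geq 2$, let $A$ be the smallest cyclic arc containing all points of $b$; if $|b| = 1$ (so $b$ is a singleton), let $A$ consist of just the single bad point. In either case, the noncrossing structure of $q$ guarantees three properties of $\h q$: (i) no block of $\h q$ is a through-block, because the "cuts" placed at the endpoints of $A$ do not split any block by noncrossingness; (ii) every point of $b$ lies on the upper row of $\h q$; and (iii) the first and last upper points of $\h q$ belong to a common block of $\h q$ ($b$ itself when $|b| \geq 2$, trivially the unique upper point when $|b| = 1$).

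With $\h q$ at hand, set $p = \h q^{*}\h q \in \CC(a, a)$. A direct verification using (i) shows that $p$ is self-adjoint and idempotent, and $t(p) = 0$: the purely-upper blocks of $\h q$ become the upper blocks of $p$ and their mirror images become the lower blocks, while the purely-lower blocks of $\h q$ only contribute closed loops to the composition. Combined with (iii), this gives that $p$ is projective, has $t(p) = 0$, and has its first and last upper points in the same block, i.e.\ properties (2), (3) and (4). Property (1) follows from (ii): since all of $b$'s points lie on the upper row of $\h q$ and do not touch the middle in the composition, $b$ reappears as a block on the upper row of $p$, so $p$ has a block not in $\CC$.

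The main obstacle is justifying the second step, namely that a rotation producing $\h q$ with all three properties (i)--(iii) always exists. Here the noncrossing hypothesis on $\CC$ is indispensable: for $|b| \geq 2$, it ensures that any block distinct from $b$ is either nested inside one of $b$'s ``inner'' arcs (and thus entirely on the upper row) or lies in $b$'s complementary arc (and thus entirely on the lower row), so no other block gets split between the two rows; for $|b| = 1$ the same conclusion is immediate since the unique singleton $b$ cannot intersect any other block. The case split on $|b|$ is forced, because a singleton $b$ cannot by itself provide two distinct extreme upper points to satisfy (iii), so we must degenerate to a single-point upper row.
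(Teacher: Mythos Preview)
Your argument is correct. Both you and the paper begin identically: take a witness $q$ with a bad block $b$, rotate everything to a single row, and eventually form $p=\widehat{q}^{*}\widehat{q}$ for a suitable one-row (or split-row) rotate $\widehat{q}$. The difference lies in how property~(4) is obtained. The paper first takes $\widehat{q}$ with \emph{all} points on the upper row, so that $p=\widehat{q}^{*}\widehat{q}$ automatically satisfies (1)--(3), and then enforces (4) by an iterative peeling argument: whenever $1$ and $k$ lie in different blocks, noncrossingness gives a tensor splitting $p=q_{1}\otimes q_{2}$, one factor is still not block-stable and lies in $\CC$ (the other factor is removed by rotating it to one line and capping), and one repeats on that smaller factor. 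You instead front-load the work: by choosing the rotation so that the upper row of $\widehat{q}$ is precisely the interval $[\min b,\max b]$ (or the single point when $|b|=1$), noncrossingness guarantees at once that no block straddles the two rows, so (4) holds immediately and no iteration is needed.

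What each approach buys: the paper's route is more mechanical (no case split on $|b|$, no analysis of which blocks could straddle the cut) at the cost of an inductive tail; your route is a one-shot construction at the cost of a small case distinction and the explicit noncrossing check that every block other than $b$ lies entirely inside or entirely outside $[\min b,\max b]$. Two minor remarks on your write-up: (a) you only ever need linear intervals $[\ell,r]$ after flattening, so the phrase ``cyclic arc'' is harmless but slightly overblown; (b) the reason the reappearing block $b$ in $p$ is still outside $\CC$ is that the passage from $b$-in-$q$ to $b$-in-$\widehat{q}$ (hence to $b$-in-$p$) is itself a sequence of rotations of the one-block partition $b$, and rotations are invertible category operations---it would not hurt to make this explicit.
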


\begin{proof}
Let $r$ be a partition which is not block-stable and let $\widehat{r}$ be the partition obtained by rotating all the points of $r$ on the upper line. Then, $p = (\widehat{r})^{*}\widehat{r}$ is projective, is not block-stable and $t(p) = 0$. Assume that $1$ and $k$ do not belong to the same block of $p$. This means (by noncrossingness) that $p$ can be written as $q_{1}\otimes q_{2}$, where $q_{1}$ and $q_{2}$ are projective partition with $t(q_{i}) = 0$ for $i=1, 2$. Any block of $p$ being either a block of $q_{1}$ or of $q_{2}$, at least one of them, say $q_{1}$, is not block-stable. By rotating $q_{2}$ on one line and capping, we see that $q_{1}\in \CC$. Iterating this process, we end up with a partition satisfying condition $(4)$.
\end{proof}

\begin{prop}\label{prop:nontrivial1d}
Let $\CC$ be a category of noncrossing partitions and assume that it is \emph{not} block-stable. Then, there exists a partition $b\in \CC(k, 0)$ such that $b^{*}b\in \CC$ but $b\notin \CC$.
\end{prop}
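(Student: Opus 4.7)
The plan is to use Lemma \ref{lem:nontrivial1d} to reduce to a minimal obstruction to block-stability and then to show that this obstruction has no interior structure, leaving only an ``outer block'' which will be the desired $b$.

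Concretely, I pick $p \in \CC$ minimizing the number of points among partitions that are projective, satisfy $t(p) = 0$, have $1$ and $k$ in the same block, and are not block-stable; such a $p$ exists by Lemma \ref{lem:nontrivial1d}. Using Lemma \ref{lem:cappingnonthroughblock}, I write $p = [w_0, b_1, \ldots, b_n, w_n]$; the same lemma gives $b_i^* b_i \in \CC$ for every $i$.

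The heart of the argument is a dichotomy forcing every $b_i$ to be empty. If some $b_i$ is not block-stable, then $b_i^* b_i \in \CC$ is a projective partition with $t = 0$ inheriting a non-$\CC$ block from $b_i$, and is therefore itself not block-stable. The tensor-decomposition procedure from the proof of Lemma \ref{lem:nontrivial1d} then extracts from $b_i^* b_i$ a partition still satisfying conditions (1)--(4) but with at most $2k_i$ points. Since $n \geq 1$ whenever some $b_i$ is nonempty, $k_i < k$ and minimality is contradicted. Conversely, if $b_i$ is block-stable, a deepest block $\alpha$ of $b_i$ is an interval of consecutive points and lies in $\CC$. I can then symmetrically cap $\alpha$ and $\alpha^*$ out of $p$ by forming
\[
q = (\idpart^{\otimes j_1} \otimes \alpha \otimes \idpart^{\otimes j_2})\, p\, (\idpart^{\otimes j_1} \otimes \alpha^* \otimes \idpart^{\otimes j_2}) \in \CC,
\]
with $j_1, j_2$ chosen to align with the positions of $\alpha$. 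This $q$ is projective with $t(q) = 0$, retains $1$ and its new last point in the (unchanged) outer block, and is still not block-stable because removing the in-$\CC$ block $\alpha$ cannot destroy any non-$\CC$ block of $p$; as $q$ has $2(k - |\alpha|) < 2k$ points, this again contradicts minimality. Thus every nonempty $b_i$ would fall into one horn of the dichotomy, so each $b_i$ must be empty.

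With all $b_i$ empty, $p = c^* c$ where $c \in \NCc(n+1, 0)$ is the single-block one-row partition with coloring $w_0 w_1 \cdots w_n$. Since $p$ is not block-stable and its only block up to reflection is $c$, one has $c \notin \CC$, while $c^* c = p \in \CC$. Setting $b := c$ finishes the proof. The main technical obstacle is coordinating the two horns of the dichotomy: the first requires the tensor-decomposition reduction to terminate with strictly fewer points (which uses the fact that mixed-color pair partitions always lie in $\CC$), and the second requires that a deepest block occupy consecutive positions so that the capping formula applies literally.
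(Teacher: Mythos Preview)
Your proof is correct and follows essentially the same route as the paper's: start from a partition satisfying the four conditions of Lemma~\ref{lem:nontrivial1d}, reduce the interior pieces $b_i$ via the block-stable/non-block-stable dichotomy, and conclude that the outer block is the desired $b$. The only differences are cosmetic---you argue by minimality where the paper argues by induction, and you cap one innermost block of a block-stable $b_i$ at a time whereas the paper caps all the $b_i$'s at once---so there is nothing substantively new to compare.
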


\begin{proof}
Let $p\in \Proj(k)$ be given by Lemma \ref{lem:nontrivial1d} and write $p = [w_{1}, b_{1}, \dots, b_{n}, w_{k}]$. Let us prove that either $p_{u}\notin \CC$ or there exists a partition $q\in \Proj(l)$, $l<k$, satisfying the properties $(1)$--$(4)$ of Lemma \ref{lem:nontrivial1d}. If $p_{u}\in \CC$, there are in fact two possibilities : all the partitions $b_{i}$ are block-stable, or one of them is not.

\begin{enumerate}
\item In the first case, capping by blocks of these partitions we can remove all of them. We end up with a projective partition, the upper row of which is simply a $l$-block for some $l<k$ (because $p_{u}\in \CC$ by assumption, so that this block cannot be all of $p_{u}$). This $l$-block is not in $\CC$ because all the other blocks of $p$ are blocks of some $b_{i}$, hence in $\CC$. Thus, we are done.
\item In the second case, there is an index $i$ such that $b_{i}$ contains a block which is not in $\CC$. Then, we know by Lemma \ref{lem:cappingnonthroughblock} that $b_{i}^{*}b_{i}\in \CC$. Since $b_{i}$ has by definition strictly less points than $p$, we are done.
\end{enumerate}

Now it is clear by induction that there is a projective partition $q\in \CC$ such that $t(q) = 0$ and $q_{u}\notin \CC$. Setting $b = q_{u}$ concludes the proof.
\end{proof}

We end this section with elementary properties of the conjugation and $\boxvert$ operations on projective partitions.

\begin{lem}\label{lem:conjugation}
Let $\CC$ be a \emph{block-stable} category of partitions and let $p, q\in \Proj$ be such that $p\square^{t(p)}q\in \CC$ and $t(p) = t(q)$. Then, $q\sim \overline{p}$.
\end{lem}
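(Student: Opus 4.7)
The plan is to produce, by manipulating $p\square^{t(p)}q$ via category operations, the partition $r^{\overline{p}}_q := q_u^*\overline{p}_u$; this is exactly the witness that must lie in $\CC$ for the equivalence $q\sim\overline{p}$.

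Writing $k = t(p) = t(q)$, the starting point is to expand
\[
p\square^{k}q = (p_u^*\otimes q_u^*)\,h_\square^{k}\,(p_u\otimes q_u),
\]
and to decompose $h_\square^{k} = \rho_k^*\,\rho_k$, where $\rho_k\in\NCc(2k,0)$ is the rainbow on $2k$ upper points. This yields the factorisation $p\square^{k}q = R^*R$ with $R = \rho_k(p_u\otimes q_u)$, in which the key structural feature is that the rainbow $\rho_k$ pairs the $i$-th through-block of $p$ with the $(k-i+1)$-st through-block of $q$. Crucially, this is exactly the pairing encoded in $r^{\overline{p}}_q$: the $i$-th through-block of $\overline{p}$ is the $(k-i+1)$-st through-block of $p$, and it is matched with the $i$-th through-block of $q$ in $q_u^*\overline{p}_u$.

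I would then rotate $p\square^{k}q$ directly: left-rotating its $|w_p|$ leftm
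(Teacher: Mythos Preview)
Your approach is essentially the paper's: the paper's entire proof is the single observation that $p\square^{t(p)}q$ is a rotated version of $r^{\overline{p}}_{q}\otimes r^{\overline{q}}_{p}$, and your factorisation $p\square^{k}q = R^{*}R$ with $R=\rho_{k}(p_{u}\otimes q_{u})$ together with your pairing analysis is precisely the mechanism behind that observation --- the rainbow $\rho_{k}$ matches through-block $i$ of $p$ with through-block $k-i+1$ of $q$, which is exactly the matching in $r^{\overline{p}}_{q}$. Your text is truncated before the rotation is actually carried out, but the intended manipulation (left-rotating the $|w_{p}|$ leftmost upper points down and right-rotating the $|w_{q}|$ rightmost lower points up) does produce the tensor product $r^{\overline{p}}_{q}\otimes r^{p}_{\overline{q}}$ (equivalently, up to $*$, the paper's $r^{\overline{p}}_{q}\otimes r^{\overline{q}}_{p}$), so you are on exactly the same track as the paper.

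One remark worth making explicit, since both your sketch and the paper's one-liner leave it implicit: after rotation one has a \emph{tensor product} containing $r^{\overline{p}}_{q}$ as a factor, and one still needs to extract that factor inside $\CC$. In the only place the lemma is actually used (the proof of Theorem~\ref{thm:mainresult}, under the block-stability hypothesis and with $t(p)=t(q)=1$), this extraction is immediate; in general it requires the capping argument of Lemma~\ref{lem:decomposition}. You may want to add a sentence acknowledging this final step once you complete the rotation.
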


\begin{proof}
Note that $p\square^{t(p)}q$ is a rotated version of $r^{\overline{p}}_{q}\otimes r^{\overline{q}}_{p}$, hence $r^{\overline{p}}_{q}\in \CC$ by block-stability, concluding the proof.
\end{proof}

\begin{lem}\label{lem:fusion}
Let $\CC$ be a category of noncrossing partitions. Let $a, b, a', b'\in \CC$ be projective partitions such that $a\sim a'$ and $b\sim b'$. If $a\boxvert b \in \CC$, then $a'\boxvert b'\in \CC$ and these two partitions are equivalent.
\end{lem}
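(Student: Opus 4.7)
The strategy is to produce a single partition $R \in \CC$ with $R^{*}R = a\boxvert b$ and $RR^{*} = a'\boxvert b'$; this simultaneously yields $a'\boxvert b' \in \CC$ (as $RR^{*}$ with $R\in\CC$) and the equivalence $a\boxvert b \sim a'\boxvert b'$.

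Let $r := r^{a}_{a'} = a'_{u}{}^{*}a_{u}$ and $s := r^{b}_{b'} = b'_{u}{}^{*}b_{u}$, which lie in $\CC$ by hypothesis, and set $H := \idpart^{\otimes t(a)-k} \otimes h_{\boxvert}^{k} \otimes \idpart^{\otimes t(b)-k}$, so that by definition $a\boxvert b = (a_u^{*} \otimes b_u^{*}) H (a_u \otimes b_u)$. Since the coloring of the lower row of $r \otimes s$ agrees with that of the upper row of $a\boxvert b$, the composition $R := (r \otimes s)\cdot (a\boxvert b)$ is a well-defined element of $\CC$.

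The key combinatorial identity needed is $a_u a_u^{*} = \idpart^{\otimes t(a)}$, together with its $b$-analogue. This is immediate from the definition of a building partition: each lower point of $a_u$ is a singleton block joined to at least one upper point, so composing with the reflection $a_u^{*}$ collapses the middle row to $t(a)$ disjoint identity strings. Granted this, a direct computation gives $(a\otimes b)(a\boxvert b) = (a_u^{*}\otimes b_u^{*})(a_ua_u^{*}\otimes b_ub_u^{*}) H (a_u\otimes b_u) = a\boxvert b$, so projectivity of $a\boxvert b$ yields
$$R^{*}R = (a\boxvert b)(r^{*}r\otimes s^{*}s)(a\boxvert b) = (a\boxvert b)(a\otimes b)(a\boxvert b) = a\boxvert b.$$
Expanding $RR^{*}$ analogously and using $r \otimes s = (a'_{u}{}^{*}\otimes b'_{u}{}^{*})(a_u\otimes b_u)$, the two middle bracketed factors $(a_u\otimes b_u)(a_u^{*}\otimes b_u^{*})$ both collapse to $\idpart^{\otimes t(a)+t(b)}$, leaving $(a'_{u}{}^{*}\otimes b'_{u}{}^{*}) H (a'_{u}\otimes b'_{u}) = a'\boxvert b'$, as required.

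The only real content lies in the identity $a_u a_u^{*} = \idpart^{\otimes t(a)}$; once it is used to simplify the middle of $R^{*}R$ and $RR^{*}$, the equivalence is automatic. Notably, the argument never needs any of $a_u$, $b_u$, $a'_u$ or $b'_u$ to be individually in $\CC$ — only the combined partitions $r$, $s$ and $a\boxvert b$, for which the hypotheses explicitly provide membership.
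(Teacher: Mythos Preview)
Your proof is correct and follows the same approach as the paper: you take exactly the partition $R=(r^{a}_{a'}\otimes r^{b}_{b'})(a\boxvert b)$ that the paper writes down in its one-line argument, and you simply verify in detail (via the identity $a_{u}a_{u}^{*}=\idpart^{\otimes t(a)}$) that $R^{*}R=a\boxvert b$ and $RR^{*}=a'\boxvert b'$, which the paper leaves implicit.
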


\begin{proof}
We have
\begin{equation*}
r^{a\boxvert b}_{a'\boxvert b'} = (r^{a}_{a'}\otimes r^{b}_{b'})(a\boxvert b) \in \CC.
\end{equation*}
\end{proof}

This leads to the following definition :

\begin{de}\label{de:fusion}
Let $\CC$ be a category of noncrossing partitions and let $p$ and $q$ be projective partitions in $\CC$. Then, we denote by $[p]\ast[q]$ the equivalence class of the partition $p\boxvert q$ if the latter partition is in $\CC$. According to Lemma \ref{lem:fusion}, this is a well-defined equivalence class. If $p\boxvert q \notin \CC$, we set $[p]\ast[q] = \emptyset$.
\end{de}

\subsection{Characterization of free fusion rings}

In order to state and prove our main result, let us give some details conerning the construction of the free fusion semiring $\RR$ introduced in \cite[Sec 6.4]{freslon2013representation}. Starting with a category of noncrossing partitions $\CC$, we form the set $S(\CC)$ of equivalence classes of one-block projective partitions. This set is endowed with the \emph{conjugation map}
\begin{equation*}
[p]\mapsto \overline{[p]} = [\overline{p}]
\end{equation*}
(it is clear that $p\sim q$ if and only if $\overline{p}\sim \overline{q}$) and with the \emph{fusion operation}
\begin{equation*}
([p], [q])\mapsto [p]\ast[q]
\end{equation*}
of Definition \ref{de:fusion}. We then build out the \emph{free fusion semiring} associated to $(S(\CC), -, \ast)$ as in Definition \ref{de:fusionsemiring} and denote it $(\RR, \oplus, \otimes)$. The goal of this section is to understand the link between this fusion semiring and the fusion semiring of the associated easy quantum group. More precisely, we will be interested in the map
\begin{equation*}
\Phi : \RR \rightarrow R^{+}(\G)
\end{equation*}
sending a word $[p_{1}]\dots [p_{n}]$ to $[u_{p_{1}\dots p_{n}}]$ and extended by linearity (note that this map is well-defined). Let us study its set-theoretic properties.

\begin{lem}\label{lem:bijectivePhi}
Let $\CC$ be a \emph{block-stable} category of noncrossing partitions. Then, the map $\Phi$ is bijective.
\end{lem}

\begin{proof}
The proof of the injectivity of $\Phi$ was sketched in \cite[Lem 6.13]{freslon2013representation}, but we give a more detailed argument. Let $w = [p_{1}]\dots [p_{n}]$ and $w' = [q_{1}]\dots [q_{k}]$ be words on $S(\CC)$ such that $\Phi(w) = \Phi(w')$. This means that the projective partitions $p = p_{1}\otimes \dots \otimes p_{n}$ and $q = q_{1}\otimes \dots \otimes q_{k}$ are equivalent. Since $t(p) = n$ and $t(q) = k$, we must have $n=k$. Setting $r_{i} = r^{p_{i}}_{q_{i}}$, we see that $r^{p}_{q} = q_{u}^{*}p_{u} = r_{1}\otimes \dots \otimes r_{n}$. By block-stability, $r_{i}\in \CC$ for all $i$, i.e. $p_{i}\sim q_{i}$ for all $i$. Hence, $w = w'$ in $S(\CC)$ and $\Phi$ is injective.

Let us now prove surjectivity. Block stability means in particular that we can cancel all the non-through-blocks in a the decomposition of Corollary \ref{cor:generators} without changing its equivalence class. By Corollary \ref{cor:generators}, any projective partition is therefore equivalent to an horizontal concatenation of one-block projective through-partitions. In other words, $\Phi$ is surjective.
\end{proof}

The last ingredient we need is some precision about the notion of \emph{trivial representation} which will be important hereafter.

\begin{lem}\label{lem:trivial}
Let $p$ be a projective partition such that $t(p) = 0$. Then, $u_{p}$ is a one-dimensional representation. Moreover, $u_{p}$ is equivalent to the trivial representation of $\G$ if and only if $p_{u}\in \CC$.
\end{lem}

\begin{proof}
Because $t(p) = 0$, $T_{p}$ has rank one so that $P_{p} = T_{p}$ and $u_{p}$ is one-dimensional. Consider now a partition $b\in \CC$ lying on one line. Then, $b^{*}(b\otimes b)\in \CC$, so that $b^{*}b\sim (b^{*}b)\otimes (b^{*}b)$. Setting $u = u_{b^{*}b}$, we get $u\sim u\otimes u$ and tensoring with the contragredient representation gives $u\sim \varepsilon$, where $\varepsilon$ denotes the trivial representation of $\G$. Thus, $u_{p}$ is trivial if and only if $b^{*}p_{u} = (b^{*}b)_{l}p_{u} = r^{p}_{b^{*}b}\in \CC$. Since $b\in \CC$, this is equivalent to $p_{u}\in \CC$.
\end{proof}

We are now ready for our main result.

\begin{thm}\label{thm:mainresult}
Let $\CC$ be a category of noncrossing partitions, let $N\geqslant 4$ be an integer and let $\G$ be the associated easy quantum group. The following are equivalent :
\begin{enumerate}
\item $\G$ has no nontrivial one-dimensional representation.
\item $\CC$ is block-stable.
\item The map $\Phi$ is a semiring isomorphism.
\item The fusion semiring of $\G$ is free.
\end{enumerate}
\end{thm}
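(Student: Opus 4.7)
My plan is to establish the cycle of implications $(2) \Rightarrow (3) \Rightarrow (4) \Rightarrow (1) \Rightarrow (2)$.

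For $(2) \Rightarrow (3)$, I would use Corollary \ref{cor:generators} to write every projective $p \in \CC$ as $B_0 \otimes A_1 \otimes B_1 \otimes \dots \otimes A_{t(p)} \otimes B_{t(p)}$ with elementary through-block partitions $A_i$ and non-through-block projective partitions $B_i$. Under block-stability, Lemma \ref{lem:blockstablepartition} gives $(B_i)_u \in \CC$, so $r_i := (B_i)_u \in \CC(k_i, 0)$ witnesses $B_i \sim \emptyset$. Setting $R := r_0 \otimes A_1 \otimes r_1 \otimes \dots \otimes A_{t(p)} \otimes r_{t(p)} \in \CC$, a direct computation using the projectivity of the $A_j$ gives $R^*R = p$ and $RR^* = A_1 \otimes \dots \otimes A_{t(p)}$, so $p$ is equivalent to a concatenation of elementary partitions and thus lies in the image of $\Phi$. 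Combined with Lemma \ref{lem:injectivePhi}, this yields bijectivity of $\Phi$, and multiplicativity follows by matching term-by-term the fusion rules of Section \ref{sec:representations} with the free tensor product in $\RR$: the splittings $w = az$, $w' = \overline{z}b$ with $|z| = k$ correspond exactly to the $\square^k$ and $\boxvert^k$ operations on the underlying projective partitions.

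The implication $(3) \Rightarrow (4)$ is immediate from the definition of free fusion semiring. For $(4) \Rightarrow (1)$, under an isomorphism $R^{+}(\G) \cong R^{+}(S)$ a one-dimensional representation must correspond to an invertible basis element, i.e., a word $w \in F(S)$ with $w \otimes \overline{w} = \mathbf{1}$. For any nonempty $w$, the defining formula produces the two distinct basis elements $\mathbf{1}$ (from the splitting $z = w$) and $w\overline{w}$ (from $z = \emptyset$) inside $w \otimes \overline{w}$, so the latter sum cannot reduce to $\mathbf{1}$. Hence the only one-dimensional representation is the trivial one.

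The bulk of the work lies in $(1) \Rightarrow (2)$, which I prove contrapositively. If $\CC$ is not block-stable, Proposition \ref{prop:nontrivial1d} produces $b \in \CC(k, 0)$ with $b^*b \in \CC$ but $b \notin \CC$. Setting $p := b^*b \in \Proj(k)$, I would compute $\mathring{T}_p$ on basis vectors and observe that its image is the line spanned by $v := \sum_{(d_1, \dots, d_s)} e_{j_1(d)} \otimes \dots \otimes e_{j_k(d)}$, where $s$ is the number of blocks of $b$ and $j_m(d) = d_\ell$ whenever $m$ lies in the $\ell$-th block. Thus $T_p$ is a rank-one projection, and any nonzero sub-projection $T_q \leq T_p$ must equal $T_p$; by the linear independence of the $T_q$ for $q \in \CC$ (Proposition \ref{prop:linearindependence}), this forces $q = p$, contradicting $q \prec p$. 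Therefore $\bigvee_{q \prec p} T_q = 0$, $P_p = T_p$, and $\dim(u_p) = 1$. Finally, $u_p$ is equivalent to the trivial representation iff $p \sim \emptyset$, iff there exists $r \in \CC(k, 0)$ with $r^*r = b^*b$; comparing block structures forces $r = b$ as partitions, so $b \in \CC$, contradicting our choice. Consequently $u_p$ is a nontrivial one-dimensional representation, refuting (1). I expect this rank-one computation, together with the careful identification of $u_p$ as nontrivial, to be the main technical hurdle.
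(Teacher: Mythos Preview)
Your proposal is correct and follows essentially the same cycle of implications and the same key inputs as the paper's proof: Proposition~\ref{prop:nontrivial1d} for $(1)\Rightarrow(2)$, Corollary~\ref{cor:generators} together with Lemma~\ref{lem:blockstablepartition} for surjectivity in $(2)\Rightarrow(3)$, and the observation that a nontrivial invertible basis element cannot exist in a free fusion semiring for $(4)\Rightarrow(1)$. Two small points: in $(2)\Rightarrow(3)$ your $R^{*}R$ equals the decomposition $B_{0}\otimes A_{1}\otimes\dots\otimes B_{t(p)}$ from Corollary~\ref{cor:generators} (which is only \emph{equivalent} to $p$, not equal), and your multiplicativity sketch should make explicit that block-stability is what lets you pass from $(p_{1}\otimes\dots\otimes p_{n})\square^{k}(q_{1}\otimes\dots\otimes q_{m})\in\CC$ to the individual relations $p_{n-i}\square q_{i+1}\in\CC$, whence $[p_{n-i}]=\overline{[q_{i+1}]}$ via Lemma~\ref{lem:conjugation}---this is precisely where the hypothesis $(2)$ is used in that step.
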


\begin{proof}
$(1) \Rightarrow (2)$ : Assume that $\CC$ is not block-stable. Then, by Proposition \ref{prop:nontrivial1d}, there is a partition $b\notin\CC$ lying on one line such that $b^{*}b\in \CC$. Thus, $u_{b^{*}b}$ is a one-dimensional representation which is not equivalent to the trivial one by Lemma \ref{lem:trivial}.

$(2) \Rightarrow (3)$ : If $\CC$ is block-stable, then $\Phi$ is bijective by Lemma \ref{lem:bijectivePhi}. Let $w = [p_{1}]\dots [p_{n}]$ and set $p = p_{1}\otimes \dots \otimes p_{n}$. By definition of the conjugation on $S(\CC)$, $\Phi(w)\otimes \Phi(\overline{w}) = [u_{p}\otimes u_{\overline{p}}]$. The representation $u_{p}\otimes u_{\overline{p}}$ contains $u_{p\square^{t(p)}\overline{p}}$ and since $(p\square^{t(p)}\overline{p})_{u}$ is a rotation of $p$, the latter representation is trivial by Lemma \ref{lem:trivial}. As $\Phi(\overline{w})$ is an equivalence class of irreducible representations, it is the class of the contragredient of $\Phi(w)$ and $\Phi$ preserves the conjugation operation. We can now prove that $\Phi$ respects tensor products. Let $w = [q_{1}]\dots [q_{n}]$ and let $k$ be an integer such that $(p_{1}\dots p_{n})\square^{k} (q_{1}\dots q_{n})\in \CC$. By block stability, we see that $p_{n-i}\square q_{i+1}\in \CC$ for every $0\leqslant i\leqslant k-1$. This has two consequences :
\begin{itemize}
\item $[p_{n-i}] = \overline{[q_{i+1}]}$ by Lemma \ref{lem:conjugation}. Moreover, setting $z = [p_{n-k+1}]\dots [p_{n}]$, we have $w = az$ and $w' = \overline{z}b$.
\item $[u_{(p_{1}\dots p_{n})\square^{k} (q_{1}\dots q_{n})}] = \Phi(ab)$.
\end{itemize}
Similarly, if $(p_{1}\dots p_{n})\boxvert^{k} (q_{1}\dots q_{n})\in \CC$ then there is a unique $z$ of length $k-1$ such that $w = az$, $w' = \overline{z}b$ and $[u_{(p_{1}\dots p_{n})\boxvert^{k} (q_{1}\dots q_{n})}] = \Phi(a\ast b)$.

$(3) \Rightarrow (4)$ : $(\RR, \oplus, \otimes)$ is by definition a free fusion semiring.

$(4) \Rightarrow (1)$ : Let $u$ be a nontrivial one-dimensional representation. Then, $\overline{u}\otimes u = \varepsilon$ in $R^{+}(\G)$ but $u\neq \varepsilon$, hence the fusion semiring is not free (see \cite[Rem 4.4]{raum2012isomorphisms}).
\end{proof}

As a corollary, we can now give a converse to Lemma \ref{lem:bijectivePhi}.

\begin{cor}
Let $\CC$ be a category of noncrossing partitions. Then, $\Phi$ is surjective if and only if $\CC$ is block-stable.
\end{cor}

\begin{proof}\label{rem:bijectivephi}
It was proved in Lemma \ref{lem:bijectivePhi} that $\Phi$ is bijective if $\CC$ is block-stable. Assume conversely that $\CC$ is not block-stable. Then, Proposition \ref{prop:nontrivial1d} provides us with a partition $b\notin\CC$ such that $b^{*}b\in \CC$ and $t(b^{*}b) = t(b) = 0$. In particular, $u_{b^{*}b}$ cannot be equivalent to a representation $u_{p}$ if $t(p)\geqslant 1$. Since any representation in the image of $\Phi$ is equivalent to $u_{p}$ for some projective partition $p$ satisfying $t(p)\geqslant 1$, $\Phi$ is not surjective.
\end{proof}

\subsection{Classification}

A possible interpretation of Theorem \ref{thm:mainresult} is that $R^{+}(\G)$ contains a "free part" $\RR$, to which it reduces precisely when it is free. We therefore now want to get a better understanding of the set $S(\CC)$ and its fusion operation. This will in particular lead us to a classification of all the \emph{free} fusion semirings arising from easy quantum groups. To do this, let us first give an alternative description of $\RR$. Let $k\geqslant 1$ be an integer, let $\pi_{k}$ denote the unique one-block partition in $\NCc(k, k)$ with all points colored in white and set $\pi_{-k} = \overline{\pi}_{k}$ (i.e. all the points are colored in black). We first consider the objects
\begin{equation*}
\left\{\begin{array}{ccc}
I(\CC) & = & \{k\in \Z^{*}, \pi_{k}\in \CC\} \\
k\sim k' & \Leftrightarrow & \pi_{k}\sim \pi_{k'}
\end{array}\right.
\end{equation*}

\begin{lem}\label{lem:classificationfusionring}
Let $\CC$ be a category of noncrossing partitions. Then $I(\CC)$ is equal either to $\Z^{*}$ or to $\{-1, 1\}$.
\end{lem}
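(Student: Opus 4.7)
The inclusion $\{-1, 1\} \subseteq I(\CC)$ is automatic, since $\pi_1 = \idpart$ is the white identity and $\pi_{-1}$ is obtained from it by reversal; and the same reversal operation shows that $\pi_k \in \CC$ if and only if $\pi_{-k} \in \CC$. So it suffices to prove that if $\pi_k \in \CC$ for some $k \geq 2$, then $\pi_m \in \CC$ for every $m \geq 1$.

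My plan is to establish two complementary implications. The first is a \emph{growing} step: a direct block-by-block check verifies that
\begin{equation*}
\pi_{k+1} = (\idpart \otimes \pi_k)(\pi_k \otimes \idpart),
\end{equation*}
so $\pi_k \in \CC$ forces $\pi_{k+1} \in \CC$ via category operations. The second is a \emph{shrinking} step: assuming $k \geq 2$, rotate $k-1$ of the upper points of $\pi_k$ down to the lower row, each rotation flipping the color of the moved point, giving a single-block partition in $\NCc(1, 2k-1)$ with upper coloring $\circ$ and lower coloring $\bullet^{k-1} \circ^k$. Positions $k-1$ and $k$ of the lower row now carry the opposite colors $\bullet, \circ$, so they can be capped with the mixed pair $\sqcap_{\bullet\circ} \in \NCc(2, 0)$, which lies in $\CC$ as a rotation and reflection of $\idpart$. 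Concretely, composing this rotated $\pi_k$ on top of $\pi_{-1}^{\otimes(k-2)} \otimes \sqcap_{\bullet\circ} \otimes \idpart^{\otimes(k-1)}$ (whose upper coloring $\bullet^{k-1}\circ^k$ matches) produces a one-block partition in $\NCc(1, 2k-3)$ with upper $\circ$ and lower $\bullet^{k-2} \circ^{k-1}$, which is exactly a rotation of $\pi_{k-1}$. Hence $\pi_{k-1} \in \CC$.

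Iterating the growing step upward and the shrinking step downward from any given $\pi_k \in \CC$ with $k \geq 2$ yields $\pi_m \in \CC$ for every $m \geq 1$; combined with the reversal symmetry, this gives $\pi_{-m} \in \CC$ as well, so $I(\CC) = \Z^*$ in this case and $I(\CC) = \{-1, 1\}$ otherwise. The main obstacle is the shrinking step: one must notice that a carefully chosen partial rotation of $\pi_k$ creates a pair of neighboring, oppositely colored points inside the single through-block, which is precisely the configuration needed to apply the only pair partition automatically available in $\CC$, namely the mixed pair arising from $\idpart$. The growing step and the final induction are then essentially mechanical.
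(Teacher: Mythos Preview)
Your proof is correct and follows the same overall architecture as the paper: show $I(\CC)$ is symmetric, establish a shrinking step $\pi_k \Rightarrow \pi_{k-1}$ via rotation and capping with a mixed pair, establish a growing step, and conclude by induction. Your shrinking step is essentially identical to the paper's (the paper phrases it as ``rotating on one line and capping in the middle with a block of size $2$'').

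The one genuine difference is the growing step. You use the clean identity $\pi_{k+1} = (\idpart \otimes \pi_k)(\pi_k \otimes \idpart)$, which gives $k \Rightarrow k+1$ directly. The paper instead first shrinks down to $\pi_2$ and then uses the doubling identity
\[
(\pi_{2k}\otimes \pi_{2k})(\pi_{1}^{\otimes k}\otimes \pi_{2k}\otimes \pi_{1}^{\otimes k})(\pi_{2k}\otimes \pi_{2k}) = \pi_{4k},
\]
filling in the gaps between successive powers of $2$ with the shrinking step. Your increment is simpler and makes the induction cleaner; the paper's doubling argument is slightly more indirect but of course equally valid. Both routes are elementary and the difference is cosmetic rather than conceptual.
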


\begin{proof}
First, $\pi_{1}\in I(\CC)$ by definition of a category of partitions. Rotating $\pi_{k}$ upside down yields $\pi_{-k}$, hence $I(\CC)$ is symmetric. Assume now that $\pi_{k}$ is in $\CC$. Then, rotating it on one line and capping in the middle with a block of size $2$, we get one block with $k-1$ white points and $k-1$ black points, i.e. a rotated version of $\pi_{k-1}$. Hence, if $k\in I(\CC)$ and $k\geqslant 2$ then $k-1\in I(\CC)$. Assume now that $\pi_{2}\in \CC$ and note that
\begin{equation*}
(\pi_{2k}\otimes \pi_{2k})(\pi_{1}^{\otimes k}\otimes \pi_{2k}\otimes \pi_{1}^{\otimes k})(\pi_{2k}\otimes \pi_{2k}) = \pi_{4k}.
\end{equation*}
Therefore $I(\CC) = \Z^{*}$ as soon as $2\in I(\CC)$, concluding the proof.
\end{proof}

The set $I(\CC)$ together with its equivalence relation encodes almost the same information as $S(\CC)$. In fact, if $[k]$ denotes the equivalence class of $k$ for the relation $\sim$, we have $\overline{[k]} = [-k]$ and the fusion operation is given by
\begin{equation*}
[k]\ast[k'] = \left\{\begin{array}{ccc} 
[k+k'] & \text{if} & k+k'\in I(\CC) \\
\emptyset & \text{if} & k+k' \neq 0 \text{ and } k+k'\neq I(\CC)
\end{array}\right.
\end{equation*}
As we see, the only thing we need to recover $S(\CC)$ is a "$0$ element". However, there is a subtlety at that point :  there are two (a priori) distinct zero elements. More precisely, let $\pi_{0^{+}} = [\circ, \bullet]$ be a one-block projective partition in $\NCc(2, 2)$ (i.e. of the form $\vierpartrot$) and set $\pi_{0^{-}} = [\bullet, \circ]$ in the same way. The following facts are straightforward.

\begin{lem}\label{lem:0equivalence}
We have $\overline{\pi}_{0^{+}}\sim \pi_{0^{+}}$, $\overline{\pi}_{0^{-}}\sim \pi_{0^{-}}$ and $\pi_{0^{+}}\in \CC$ if and only if $\pi_{0^{-}}\in \CC$. Moreover, $\pi_{0^{+}}\sim \pi_{0^{-}}$ if and only if $\pi_{2}\in \CC$.
\end{lem}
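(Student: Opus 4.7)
I would prove the three claims in order, handling the easy self-duality and color-swap statements first and saving the last equivalence for careful bookkeeping.

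For the self-duality of $\pi_{0^\pm}$ under reversal, the key observation is that the reversal $p \mapsto \overline{p}$ is geometrically a $180^\circ$ rotation of the whole diagram combined with a global color flip. Applied to $\pi_{0^+}$, whose two rows are both colored $(\circ, \bullet)$ and whose unique block contains all four points, this rotation sends each row to the reverse of the other row, with colors flipped. Reversing $(\circ, \bullet)$ gives $(\bullet, \circ)$, and flipping the colors returns $(\circ, \bullet)$, so the reversed partition has precisely the same rows and the same (unique) block as $\pi_{0^+}$; in other words $\overline{\pi}_{0^+} = \pi_{0^+}$ on the nose. The identical computation gives $\overline{\pi}_{0^-} = \pi_{0^-}$, and reflexivity of $\sim$ finishes these two points.

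For $\pi_{0^+} \in \CC \Leftrightarrow \pi_{0^-} \in \CC$, the plan is to exhibit an explicit sequence of two rotations inside $\CC$ taking one to the other. Rotating the leftmost upper point of $\pi_{0^+}$ down to the leftmost lower position (with color flipped to $\bullet$) produces a $(1,3)$ one-block partition with upper $(\bullet)$ and lower $(\bullet, \circ, \bullet)$. Then rotating the rightmost lower point back up to the rightmost upper position (with color flipped to $\circ$) yields the $(2,2)$ one-block partition with both rows equal to $(\bullet, \circ)$, which is exactly $\pi_{0^-}$. Since rotations are category operations, this gives one implication; the other follows by running the same sequence in reverse (or by the symmetric computation starting from $\pi_{0^-}$).

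The substantive step is the final equivalence $\pi_{0^+} \sim \pi_{0^-} \Leftrightarrow \pi_2 \in \CC$. The plan is to compute $r^{\pi_{0^+}}_{\pi_{0^-}} = (\pi_{0^-})_u^*(\pi_{0^+})_u$ directly from the through-block decomposition: since $t(\pi_{0^\pm}) = 1$, the upper building partitions each have a single white lower point, with upper rows $(\circ, \bullet)$ and $(\bullet, \circ)$ respectively, and all three of their points lying in the unique block. Vertically concatenating $(\pi_{0^-})_u^*$ on top of $(\pi_{0^+})_u$ then collapses the middle row and fuses everything into a single $(2,2)$ one-block partition with upper $(\circ, \bullet)$ and lower $(\bullet, \circ)$. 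Exactly the same two-rotation trick as above, applied now to $\pi_2$ instead of $\pi_{0^+}$, produces this same $(2,2)$ partition, so $r^{\pi_{0^+}}_{\pi_{0^-}}$ is a rotated version of $\pi_2$ and lies in $\CC$ if and only if $\pi_2$ does. The main obstacle throughout is purely the bookkeeping of colorings under rotations: one must be careful to rotate on the correct sides so that the color flips accumulate into the desired pattern.
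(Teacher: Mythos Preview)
Your proof is correct and aligns with what the paper has in mind: the paper omits the argument entirely as ``straightforward,'' but later (in the proof of Theorem~\ref{thm:classificationfree}) it explicitly notes that $r^{\pi_{0^{+}}}_{\pi_{0^{-}}}$ is a rotated version of $\pi_{2}$, which is exactly the key computation you carry out. Your careful tracking of the colorings under rotation is exactly what the ``straightforward'' label is hiding, and your observation that in fact $\overline{\pi}_{0^{\pm}} = \pi_{0^{\pm}}$ (equality of partitions, not merely equivalence) is a clean way to dispatch the first two claims.
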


If $\pi_{0^{+}}\in\CC$, then we have, for any $k\geqslant 0$,
\begin{equation*}
\left\{\begin{array}{ccc}
\left[k\right]\ast \left[-k\right] & = & [0^{+}] \\
\left[-k\right]\ast \left[k\right] & = & [0^{-}]
\end{array}\right.
\end{equation*}

We can now give another description of $\RR$. Consider the set
\begin{equation*}
I'(\CC) = \left\{\begin{array}{cc}
I(\CC)\cup \{0^{+}, 0^{-}\}  & \text{if } \pi_{0^{+}}\in \CC \\
I(\CC) & \text{otherwise}
\end{array}\right.
\end{equation*}
Capping neighboring blocks of different colors repeatedly, we see that any one-block projective partition is equivalent to $\pi_{x}$ for some $x\in I'(\CC)$. Thus, we have
\begin{equation*}
S(\CC) = I'(\CC)/\sim,
\end{equation*}
with the involution given by the opposite integer and the fusion given by the addition (with the special rule for $0^{\pm}$). Using this, we can classify the free fusion rings $\RR$ arising from categories of noncrossing partitions. According to Theorem \ref{thm:mainresult}, this gives in particular all the possible free fusion semirings $R^{+}(\G)$ of easy quantum groups. As will appear, there is one case where the description is a bit intricate. Let us introduce it now to simplify further reference.

\begin{de}
Let $\mathcal{S}$ be the set with four elements $\{\alpha, \beta, \gamma, \overline{\gamma}\}$ endowed with the involution $\overline{\alpha} = \alpha$, $\overline{\beta} = \beta$ and the fusion operations
\begin{equation*}
\begin{array}{cccccc}
\gamma \ast \overline{\gamma} & = & \alpha & \overline{\gamma}\ast \gamma & = & \beta \\
\alpha\ast \alpha & = & \alpha & \beta\ast \beta & = & \beta \\
\gamma\ast\gamma = \overline{\gamma}\ast\overline{\gamma} & = & \emptyset & \alpha\ast \beta = \beta \ast \alpha & = & \emptyset \\
\gamma\ast\alpha = \overline{\gamma}\ast\beta & = & \emptyset & \gamma\ast\beta = \alpha\ast\gamma & = & \gamma \\
\overline{\gamma}\ast\alpha  = \beta\ast \overline{\gamma} & = & \gamma & \beta\ast \gamma =  \alpha\ast \overline{\gamma} & = & \emptyset \\
\end{array}
\end{equation*}
\end{de}

\begin{thm}\label{thm:classificationfree}
Let $\CC$ be a category of noncrossing partitions. Then,
\begin{enumerate}
\item If $\pi_{0^{+}}\notin \CC$, then $S(\CC) = \{[1]\}$ or $S(\CC) = \{[-1], [1]\}$.
\item If $\pi_{0^{+}}\in \CC$, then $S(\CC) = \mathcal{S}$ or $S(\CC) = \Z_{s}$ for some integer $\infty\geqslant s\geqslant 1$.
\end{enumerate}
\end{thm}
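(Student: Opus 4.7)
The plan is to read off the structure of $S(\CC)$ from the pair consisting of $I(\CC)$ (whose two possibilities are given by Lemma \ref{lem:classificationfusionring}) and the Boolean condition $\pi_{0^+}\in\CC$, and then to analyze $\sim$ in each regime. The key preliminary fact I will establish is that $\pi_2\in\CC$ forces $\pi_{0^+}\in\CC$: rotating $\pi_2$ by moving its leftmost upper point to the lower-left and its rightmost lower point to the upper-right yields a partition $r\in\CC(2,2)$ with upper coloring $\circ\bullet$, lower coloring $\bullet\circ$, and all four points in a single block, and a direct computation gives $r^{*}r=\pi_{0^+}$. Contrapositively, $\pi_{0^+}\notin\CC$ forces $\pi_2\notin\CC$, so Lemma \ref{lem:classificationfusionring} gives $I(\CC)=\{-1,1\}$; then $I'(\CC)=I(\CC)$ and $S(\CC)=\{[1],[-1]\}/\!\sim$ is either $\{[1]\}$ or $\{[1],[-1]\}$, which is case (1).

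Assume now $\pi_{0^+}\in\CC$, so $I'(\CC)=I(\CC)\cup\{0^+,0^-\}$, and split on $I(\CC)$. If $I(\CC)=\{-1,1\}$ (equivalently $\pi_2\notin\CC$), Lemma \ref{lem:0equivalence} gives $[0^+]\neq[0^-]$, and I claim $[1]\neq[-1]$: otherwise the witness would be a single strand with opposite colors on its two points, and then $r'=(\idpart\otimes r^{*})\pi_{0^+}\in\CC$ would satisfy $r'^{*}r'=\pi_{0^+}$ and $r'r'^{*}=\pi_2$, forcing $\pi_2\in\CC$---a contradiction. Hence $S(\CC)$ has exactly four distinct elements, identified with $\{\gamma,\overline{\gamma},\alpha,\beta\}$. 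To recover the fusion table, I will compute $p\boxvert^{1}q$ for each pair using the formula of Section \ref{sec:representations}: the relevant $h_{\boxvert}^{1}$ turns out to be one of $\pi_{0^+},\pi_{0^-},\pi_2,\pi_{-2}$ depending on the colors of $p$ and $q$, and it lies in $\CC$ precisely in the first two cases. When it does, the fusion is defined and the one-block output reduces to canonical form by symmetric capping with rotations of $\pi_1^{\circ}$. When instead $h_{\boxvert}^{1}=\pi_{\pm 2}\notin\CC$, the same rotate-and-merge idea used in the preliminary step decides whether the target partition can still be constructed in $\CC$ via $\pi_{0^{\pm}}$ playing the role of merger, and checking the remaining cases one by one matches the $\mathcal{S}$ table exactly.

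If on the other hand $I(\CC)=\Z^{*}$, then $\pi_2\in\CC$ and Lemma \ref{lem:0equivalence} gives $[0^+]=[0^-]=:[0]$, so $S(\CC)$ is a quotient of $\Z$. Inserting $\idpart^{\otimes(k-1)}\otimes\pi_2\otimes\idpart^{\otimes(k'-1)}$ between the two blocks of $\pi_{k}\otimes\pi_{k'}$ produces $\pi_{k+k'}$ inside $\CC$, so the fusion $[k]\ast[k']=[k+k']$ is always defined; the same merging applied to $r\otimes\pi_{m}$, for $r$ a witness of $\pi_{k}\sim\pi_{k'}$, produces a witness of $\pi_{k+m}\sim\pi_{k'+m}$, showing that $\sim$ is translation invariant on $\Z$. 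Therefore $H:=\{n\in\Z:n\sim 0\}$ is a subgroup $s\Z$ of $\Z$, and $S(\CC)\cong\Z/s\Z=\Z_{s}$ with $s\in\{1,2,\ldots,\infty\}$. The hardest part is the fusion-table verification in sub-case (2a): several entries correspond to pairs where the formulaic $h_{\boxvert}^{1}$ lies outside $\CC$, and deciding whether the target partition is nevertheless in $\CC$ requires either a concrete alternate construction via the block-merger trick or a concrete obstruction showing no such construction exists; these two outcomes must then be shown to match the asymmetric table of $\mathcal{S}$.
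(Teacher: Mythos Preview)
Your overall architecture matches the paper's: split on whether $\pi_{0^{+}}\in\CC$, use Lemma~\ref{lem:classificationfusionring} to reduce $I(\CC)$ to two cases, and in case~(2) split again on $I(\CC)$. Your treatments of case~(1) and of sub-case $I(\CC)=\Z^{*}$ are fine and in fact more detailed than the paper's.

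There is a genuine gap in sub-case $I(\CC)=\{-1,1\}$ with $\pi_{0^{+}}\in\CC$. You establish $[0^{+}]\neq[0^{-}]$ (via Lemma~\ref{lem:0equivalence}) and $[1]\neq[-1]$, and then conclude ``Hence $S(\CC)$ has exactly four distinct elements.'' But two pairwise inequalities do not give four distinct elements: you still need $[0^{+}]\neq[\pm 1]$ and $[0^{-}]\neq[\pm 1]$. The paper handles this explicitly: from a hypothetical equivalence $\pi_{0^{+}}\sim\pi_{\pm 1}$ one rotates and caps $r^{\pi_{0^{+}}}_{\pi_{\pm 1}}$ to produce a singleton in $\CC$, which together with $\pi_{0^{+}}$ forces $\pi_{2}\in\CC$, contradicting $I(\CC)=\{-1,1\}$. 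Without this step your case count is unjustified.

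There is also a conceptual wrinkle in your fusion-table plan. In the paper's conventions $h_{\boxvert}^{1}$ is \emph{always} $\pi_{2}$ (all points white), because building partitions have white lower rows; it does not become $\pi_{0^{\pm}}$ or $\pi_{-2}$ depending on the colors of $p$ and $q$. What you should be testing is whether the resulting one-block partition $p\boxvert q$ (whose upper coloring is simply the concatenation of those of $p$ and $q$) lies in $\CC$. The paper does this directly: for instance $\pi_{1}\boxvert\pi_{0^{+}}$ is the one-block projective partition with upper coloring $\circ\circ\bullet$, and if it were in $\CC$ one would extract $\pi_{2}$ by rotation and capping, a contradiction. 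Reformulating your check this way makes the ``hardest part'' routine rather than a case-by-case construction/obstruction problem.
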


\begin{proof}
Assume that $\pi_{0^{+}}\notin \CC$. Since capping $\pi_{2}\otimes \pi_{-2}$ yields $\pi_{0^{+}}$, we must have, by Lemma \ref{lem:classificationfusionring}, $I(\CC) = \{-1, 1\}$. There are then only two possible equivalence relations : either $1$ is equivalent to $-1$ (yielding the first case) or $1$ is not equivalent to $-1$ (yielding the second case).

Assume now that $\pi_{0^{+}}\in \CC$ and that $I(\CC) = \{-1, 1\}$. Rotating and capping $r^{\pi_{0^{+}}}_{\pi_{\pm 1}}$ we get a singleton, which, combined again with $\pi_{0^{+}}$ would imply that we can change the colors of any partition and thus that $I(\CC) = \Z^{*}$ (because then $\pi_{2}\in \CC$), a contradiction. In other words, $\pi_{0^{+}}$ cannot be equivalent to $\pi_{\pm 1}$. The same happens if we assume $\pi_{0^{+}} \sim \pi_{0^{-}}$ (because $r^{\pi_{0^{+}}}_{\pi_{0^{-}}}$ is a rotated version of $\pi_{2}$). Therefore, there are $4$ equivalence classes in $S(\CC)$. It is clear that the conjugation map and the first six equations giving the fusion operation are that of $\mathcal{S}$ under the identification $\alpha = 0^{+}$, $\beta = 0^{-}$ and $\gamma = 1$. To see that the last four ones are also satisfied, notice for instance that $\pi_{1}\ast\pi_{0^{+}}\in \CC$ implies that $\pi_{2}\in \CC$, contradicting $I(\CC) = \{-1, 1\}$. The other cases are done similarly.

Assume eventually that $\pi_{0^{+}}\in \CC$ and that $I(\CC) = \Z^{*}$. Then, $\pi_{0^{+}} \sim \pi_{0^{-}}$ by Lemma \ref{lem:0equivalence} and $S(\CC)$ is a quotient (as an additive group) of $\Z$, i.e. $S(\CC) = \Z_{s}$ for some $s$.
\end{proof}

As we will see later on, Theorem \ref{thm:classificationfree} is complete in the sense that there are categories of noncrossing partitions which are block-stable and yield all the possible free fusion semirings. Let us list them now, even though proofs will be postponed to the next section :
\begin{equation*}
\begin{array}{|c|c|} \hline
S(\mathcal{C}^{\circ, \bullet}) & \mathbb{G} \\ \hline
\{[1]\} & O_{N}^{+} \\ \hline
\{[-1], [1]\} & U_{N}^{+} \\ \hline
\mathbb{Z}_{s}, 1\leqslant s\leqslant \infty & H_{N}^{s+} \text{ (note that }H^{1+}_{N} = S_{N}^{+}\text{)} \\ \hline
\mathcal{S} & \widetilde{H}_{N}^{+} \text{ (see Definition \ref{de:alternating})}\\ \hline
\end{array}
\end{equation*}

We are not claiming that the only block-stable categories of noncrossing partitions are those corresponding to the above quantum groups. In fact, the category of noncrossing partitions $\mathcal{B}^{\circ, \bullet} = \langle \theta_{1}\rangle$ is obviously block-stable and $S(\mathcal{B}^{\circ, \bullet}) = \{[-1], [1]\}$. However, the associated quantum group cannot be isomorphic to $U_{N}^{+}$ since it has an irreducible representation of dimension $N-1$ (but it can be seen to be isomorphic to $U_{N-1}^{+}$). However, the classification of all categories of noncrossing partitions which is currently undergone by P. Tarrago et M. Weber \cite{tarrago2015unitary} will straightforwardly yield the list of all block-stable categories of noncrossing partitions. We thank the authors for having kindly communicated to us part of their results.

\subsection{Examples}

We will now show how Theorem \ref{thm:mainresult} applies to the quantum reflection groups $H_{N}^{s+}$ for $\infty > s\geqslant 1$. The fusion rules of these quantum groups were studied in \cite{banica2009fusion} and, as one expects, our technique recovers the results of this paper in a very natural way : the set $I(\CC)$ is equal to $\Z^{*}$ and the equivalence relation $\sim$ is equality modulo $s$. To see this, we first have to describe the "easy structure" of $H_{N}^{s+}$, i.e. its category of partitions. Let us denote by $\theta_{s}\in \NCc(k, 0)$ the one-block partition with all points colored in white. We then define, for $s\geqslant 1$, a category of partitions $\CC_{s} = \langle \pi_{2}, \theta_{s}\rangle$.

\begin{rem}\label{rem:simplecases}
When, $s\geqslant 3$, $\pi_{2}$ can be constructed out of $\theta_{s}$ using the category operations. The presence of $\pi_{2}$ only ensures that when $s=1$, we recover the quantum permutation group $S_{N}^{+}$ and when $s=2$, we recover the free hyperoctahedral quantum group $H_{N}^{+}$. This is straightforward to prove.
\end{rem}

\begin{prop}
Let $N\geqslant 4$ be an integer and let $s\geqslant 1$. Then, the easy unitary quantum group $\G$ associated to $\CC_{s}$ is the quantum group $H_{N}^{s+}$ of \cite[Def 1.3]{banica2009fusion}.
\end{prop}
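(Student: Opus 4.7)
The plan is to invoke Woronowicz's Tannaka-Krein theorem, according to which the quantum group $\G$ associated to $\CC_{s}$ is the unique (up to isomorphism) compact matrix quantum group whose intertwiner spaces on tensor powers of its fundamental representation are linearly spanned by the maps $T_{p}$ for $p\in \CC_{s}$. Since $\CC_{s}$ is by definition generated as a category of partitions by $\pi_{2}$ and $\theta_{s}$, and since the assignment $p\mapsto T_{p}$ is compatible with the category operations, it suffices to show that the defining algebraic relations of $H_{N}^{s+}$ from \cite[Def.~1.3]{banica2009fusion} are equivalent to the requirement that $T_{\pi_{2}}$ and $T_{\theta_{s}}$ be intertwiners between the appropriate tensor powers of $u$.

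Next, I would compute the two maps explicitly and translate the corresponding intertwiner conditions into relations on the matrix coefficients $u_{ij}$. The map $T_{\pi_{2}}$ acts by $e_{i}\otimes e_{j}\mapsto \delta_{ij}\, e_{i}\otimes e_{i}$, and requiring that it belong to $\Hom(u^{\otimes 2}, u^{\otimes 2})$ yields, together with biunitarity of $u$, the usual ``Sudoku-type'' relations $u_{ij}u_{kl}=0$ whenever $i=k$, $j\neq l$ or $i\neq k$, $j=l$, which characterize the free hyperoctahedral quantum group $H_{N}^{+}$. The map $T_{\theta_{s}} : (\C^{N})^{\otimes s}\to \C$ sends $e_{i_{1}}\otimes \cdots \otimes e_{i_{s}}$ to $1$ if all the $i_{k}$ are equal and to $0$ otherwise, and the intertwiner condition with the trivial representation gives the relation $\sum_{j}u_{i_{1}j}\cdots u_{i_{s}j}=\delta_{i_{1}\cdots i_{s}}$ for all tuples $(i_{1},\dots,i_{s})$. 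Combined with the $\pi_{2}$-relations, which annihilate all mixed terms in this sum, the identity collapses to the defining relation $u_{ij}^{s}=u_{ij}u_{ij}^{*}$ of $H_{N}^{s+}$.

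The main technical obstacle is the careful bookkeeping in passing from the raw intertwiner conditions to these clean algebraic relations, particularly in using the $\pi_{2}$-relations to simplify the $\theta_{s}$-condition. One must also handle the small-$s$ cases flagged in Remark \ref{rem:simplecases}: for $s=1$, $\theta_{1}$ is the singleton and forces $u$ to be a magic unitary, recovering $S_{N}^{+}=H_{N}^{1+}$ with the degenerate relation $u_{ij}=u_{ij}u_{ij}^{*}$ saying that each $u_{ij}$ is a projection; for $s=2$, including $\pi_{2}$ in the generators of $\CC_{2}$ is essential, as $\pi_{2}$ is not generated by $\theta_{2}$ alone, and one recovers $H_{N}^{+}=H_{N}^{2+}$; for $s\geqslant 3$, $\pi_{2}$ is already derivable from $\theta_{s}$ via the category operations, so the two generating sets yield the same category $\CC_s$. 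Once these verifications are complete, Tannaka-Krein duality furnishes the desired isomorphism $\G \simeq H_{N}^{s+}$.
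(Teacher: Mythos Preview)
Your approach is correct but genuinely different from the paper's. The paper does not translate the intertwiner conditions for the generators $\pi_{2},\theta_{s}$ directly into the defining relations of $H_{N}^{s+}$. Instead, it invokes \cite[Thm~6.3]{banica2009fusion}, which identifies the full category of partitions of $H_{N}^{s+}$ as the set of noncrossing partitions in which every block has white/black point difference divisible by $s$ on each row; since this category visibly contains $\pi_{2}$ and $\theta_{s}$, the inclusion $\CC_{s}\subset\CC(H_{N}^{s+})$ (hence a surjection $C_{\max}(\G)\to C_{\max}(H_{N}^{s+})$) is immediate. For the reverse surjection the paper does not algebraically simplify the $\theta_{s}$-intertwiner identity but rather performs partition manipulations \emph{inside} $\CC_{s}$: it first shows $\theta_{2s}\in\CC_{s}$, then rotates $s-1$ of its points to black to obtain a specific partition $p\in\CC_{s}$ whose intertwiner condition is literally $u_{ij}^{s}=u_{ij}u_{ij}^{*}$, and similarly produces a partition forcing $u_{ij}u_{ij}^{*}$ to be a projection.

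Your route is more self-contained (it avoids citing the category description of $H_{N}^{s+}$), but the step you label ``careful bookkeeping'' --- deducing $u_{ij}^{s}=u_{ij}u_{ij}^{*}$ from the raw $\theta_{s}$-relation $\sum_{j}u_{ij}^{s}=1$ together with the $\pi_{2}$-relations --- is more than bookkeeping: it requires either the paper's partition construction or a separate operator-algebraic argument (first establishing that each $u_{ij}$ is a normal partial isometry with $u_{ij}u_{ij}^{*}$ a projection, and then extracting the individual equality from the sum). One small correction: the condition $T_{\pi_{2}}\in\Hom(u^{\otimes 2},u^{\otimes 2})$ with $\pi_{2}$ all white only yields $u_{ik}u_{il}=0$ for $k\neq l$ and $u_{ik}u_{jk}=0$ for $i\neq j$; the mixed relations $u_{ik}u_{il}^{*}=0$ needed for normality and the projection property come from \emph{rotated} versions of $\pi_{2}$, which of course also lie in $\CC_{s}$ but should be invoked explicitly.
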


\begin{proof}
In view of Remark \ref{rem:simplecases}, we can assume $s\geqslant 3$. It is proved in \cite[Thm 6.3]{banica2009fusion} that the category of partitions associated with the quantum group $H_{N}^{s+}$ is the category of noncrossing partitions satisfying the following property : in each block, the difference between the number of white and black points on each row is the same modulo $s$. In particular, it contains $\theta_{s}$ and there is a surjective map
\begin{equation*}
C_{\text{max}}(\G)\longrightarrow C_{\text{max}}(H_{N}^{s+})
\end{equation*}
sending the fundamental representation onto the fundamental representation. To prove that this map is an isomorphism, let us first make some manipulations. Capping $\theta_{s}\otimes \overline{\theta}_{s}\otimes \theta_{s}$ twice, we get a one-block partition with $s-1$ white points followed by $s-2$ black points and $s-1$ white points again. Using a rotated version of $\theta_{s}$ to change the $s-2$ black points into $2$ white points, we see that $\theta_{2s}\in\CC_{s}$. More generally, $\theta_{ks}\in \CC_{s}$ for any integer $k$. Now, we can again use a rotated version of $\theta_{s}$ to change the last $s-1$ white points of $\theta_{2s}$ in black and obtain a partition $p$. The fact that $T_{p}$ is an intertwiner exactly means that the coefficients $u_{i, j}$ of the fundamental representation $u$ satisfy
\begin{equation*}
u_{i, j}^{s} = u_{i, j}u_{i, j}^{*}.
\end{equation*}
Using similar techniques, one can build out of $\pi_{2}$ a partition which implies that $u_{i, j}u_{i, j}^{*}$ is a projection. Applying the definition of $H_{N}^{s+}$ \cite[Def 1.3]{banica2009fusion}, we therefore get a surjective map
\begin{equation*}
C_{\text{max}}(H_{N}^{s+})\longrightarrow C_{\text{max}}(\G)
\end{equation*}
sending the fundamental representation onto the fundamental representation. This property implies that this map is the inverse of the previous one, hence the result.
\end{proof}

\begin{prop}
For any $\infty > s\geqslant 1$, we have $S(\CC_{s}) = \Z_{s}$ (with $\Z_{1} = \{1\}$).
\end{prop}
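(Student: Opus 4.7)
The plan is to combine Theorem \ref{thm:classificationfree} with the explicit description of $\CC_s$ established in the previous proposition. That description states: a noncrossing partition lies in $\CC_s$ if and only if, in every block, the quantity (number of white upper points minus number of black upper points) is congruent modulo $s$ to the analogous lower-row quantity. With this concrete characterization in hand, everything reduces to checking when certain one-block partitions lie in $\CC_s$.

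First, since $\pi_2 \in \CC_s$ by definition, Lemma \ref{lem:classificationfusionring} gives $I(\CC_s) = \Z^*$. Moreover, capping $\pi_2 \otimes \pi_{-2}$ by a pair partition (itself obtained from $\pi_2$ by rotation) produces $\pi_{0^+}$, so $\pi_{0^+} \in \CC_s$. We are therefore in the second clause of Theorem \ref{thm:classificationfree}, and since the $\mathcal{S}$ case requires $I(\CC) = \{-1,1\}$ rather than $\Z^*$, this forces $S(\CC_s) = \Z_t$ for some $\infty \geq t \geq 1$. The remaining task is to identify $t = s$ by computing the equivalence relation $\sim$ on $I'(\CC_s)$.

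For this, I would compute $r^{\pi_k}_{\pi_{k'}} = (\pi_{k'})_u^* (\pi_k)_u$ directly. The building partition $(\pi_k)_u$ is a single block consisting of $k$ upper white points linked to one lower white point; composing with $(\pi_{k'})_u^*$ yields a single noncrossing block with $k$ upper and $k'$ lower white points. By the mod-$s$ characterization, this block lies in $\CC_s$ iff $k \equiv k' \pmod{s}$, whence $\pi_k \sim \pi_{k'}$ iff $k \equiv k' \pmod{s}$ for positive $k, k'$. The same computation handles mixed signs (giving $\pi_k \sim \pi_{-k'}$ iff $k + k' \equiv 0 \pmod{s}$) and the cases $\pi_k \sim \pi_{0^\pm}$ (both amount to $k \equiv 0 \pmod{s}$, using that $(\pi_{0^+})_u$ has a mixed-color upper row and one white lower point). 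Finally, Lemma \ref{lem:0equivalence} together with $\pi_2 \in \CC_s$ gives $\pi_{0^+} \sim \pi_{0^-}$.

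Collecting these equivalences, the residue map $I'(\CC_s) \to \Z/s\Z$ sending $k \mapsto k \bmod s$ and $0^\pm \mapsto 0$ descends to a bijection $S(\CC_s) \to \Z_s$, clearly compatible with the involution $x \mapsto -x$ and with the fusion operation given by addition (with the appropriate $0^\pm$ convention), which proves $S(\CC_s) = \Z_s$. The only real work is bookkeeping: making sure the shape, coloring, and through-block structure of each $r^{\pi_k}_{\pi_{k'}}$ are correctly identified so that the mod-$s$ criterion can be applied; no fresh combinatorial idea beyond Theorem \ref{thm:classificationfree} and the mod-$s$ characterization of $\CC_s$ is needed.
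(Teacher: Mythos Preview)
Your proof is correct and follows essentially the same strategy as the paper's: both rely on the mod-$s$ characterization of $\CC_s$ quoted in the preceding proposition to decide when two one-block projective partitions are equivalent. The execution differs slightly. The paper constructs equivalences \emph{by hand}, rotating and capping with $\theta_s$ to exhibit explicit partitions witnessing $\pi_k \sim \pi_{k-s}$, $\pi_{k+s} \sim \pi_k$, and $\pi_0 \sim \pi_s$, and only at the end rotates $r^{\pi_k}_{\pi_{k'}}$ to $\theta_{|k-k'|}$ for the converse. You instead compute the shape and coloring of $r^{\pi_k}_{\pi_{k'}}$ once and for all and feed it directly into the membership criterion, which handles both directions simultaneously. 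Your route is a bit more economical and, unlike the paper (which defers $s=1,2$ to \cite{freslon2013representation}), treats all $s \geqslant 1$ uniformly. Invoking Theorem \ref{thm:classificationfree} up front to reduce to identifying a single parameter $t$ is also a nice organizational touch that the paper leaves implicit.
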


\begin{proof}
The proofs of the cases $s = 1$ and $s = 2$ were done in \cite[Sec 5.2]{freslon2013representation}, so that we may assume $s\geqslant 3$. Let us set $\pi_{0} = \pi_{0^{+}}$, which is equivalent to $\pi_{0^{-}}$. The proposition follows from the following elementary facts :
\begin{itemize}
\item The only partitions of the form $\theta_{x}$ in $\CC_{s}$ are exactly those where $x$ is a multiple of $s$ (use the description of $\CC_{s}$ in terms of number of white and black points).
\item Capping in the middle of $\theta_{s}\otimes \overline{\theta}_{s}$ yields a rotated version of $\pi_{s-1}$. Hence, $I(\CC_{s}) = \Z^{*}$.
\item Rotating $\theta_{s}$, we get an equivalence between $\pi_{k}$ and $\pi_{k-s}$ for any $0 < k < s$.
\item Capping $\pi_{k+s}$ with $\theta_{s}$ gives an equivalence with $\pi_{k}$ for any $k>0$. Rotating gives the corresponding statement for negative integers.
\item Let $\theta'_{s}$ be the partition obtained by rotating one point of $\theta_{s}$ to the lower row. Then, $\pi_{0}(\pi_{1}\otimes \theta'_{s})$ gives an equivalence between $\pi_{0}$ and $\pi_{s}$.
\item Reciprocally, $\pi_{k}\sim \pi_{k'}$ implies, by rotating $r^{\pi_{k}}_{\pi_{k'}}$, that $\theta_{\vert k - k'\vert}\in \CC_{s}$, hence $\vert k - k'\vert$ must be a multiple of $s$.
\end{itemize}
\end{proof}

Note that this proposition gives an alternative proof of \cite[Thm 7.3]{banica2009fusion}. We can also treat the case of $H_{N}^{\infty+}$ along the same lines : set $\CC_{\infty} = \langle \pi_{2} \rangle$. The category $\CC_{\infty}$ can be alternatively described by the following property : this is the category of all noncrossing partitions such that in each block, the difference between the number of white and black points on each row is the same. According to \cite[Thm 6.3]{banica2009fusion}, this gives rise to the inifinite hyperoctahedral quantum group $H_{N}^{\infty +}$ for $N\geqslant 4$. Applying the same reasoning as before proves that $I(\CC_{\infty}) = \Z^{*}$ and $S(\CC_{\infty}) = \Z$, giving back the fusion rules computed in \cite[Thm 7.3]{banica2009fusion}.

The free fusion semiring associated to $\mathcal{S}$ also corresponds to an hyperoctahedral quantum group, though different from the previous ones. As we will see, it corresponds to the \emph{free complexification} $\widetilde{H}_{N}^{+}$ of the free hyperoctahedral quantum group $H_{N}^{+}$.

\begin{de}\label{de:alternating}
Let $\CC_{0^{+}}$ be the category of partitions generated by $\pi_{0^{+}}$.
\end{de}

It is clear that this quantum group has no nontrivial one-dimensional representation and that $S(\CC_{0^{+}}) = \mathcal{S}$. This quantum group can also be described through its maximal C*-algebra.

\begin{de}
Let $A_{h}^{0^{+}}(N)$ be the universal C*-algebra generated by the coefficient $(u_{ij})_{1\leqslant i, j\leqslant N}$ of a matrix $u$ such that :
\begin{itemize}
\item The matrices $u$ and $\overline{u}$ are unitary.
\item For every $1\leqslant k\leqslant N$, $u_{ki}u_{kj}^{*} = u_{ik}u_{jk}^{*} = 0$ as soon as $i\neq j$.
\end{itemize}
\end{de}

Recall that if $\G$ is a compact matrix quantum group, its \emph{free complexification} $\widetilde{\G}$ is defined in the following way : $C_{\text{max}}(\widetilde{\G})$ is the sub-C*-algebra of $C_{\text{max}}(\G)\ast C(S^{1})$ generated by the elements $u_{ij}z$, where $u$ is the fundamental representation of $\G$ and $z$ is the fundamental representation of $S^{1}$. If $\G = H_{N}^{+}$, the coefficients $v_{ij} = u_{ij}z$ satisfy the relations of the proposition above, giving a surjective $*$-homomorphism mapping $u_{ij}$ to $v_{ij}$. It is proven in \cite[Cor 2.5.13]{raum2012isomorphisms} that the fusion semiring of the free complexification of $H_{N}^{+}$ is the same as the one of $\widetilde{H}_{N}^{+}$. This, by virtue of \cite[Lem 5.3]{banica1999representations} implies that the above morphism is bijective. Hence we have proved :

\begin{cor}
Let $N\geqslant 4$ be an integer. Then, the easy quantum group associated to $\CC_{0^{+}}$ is the free complexification $\widetilde{H}_{N}^{+}$ of $H_{N}^{+}$.
\end{cor}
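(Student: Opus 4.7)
The plan is to identify the easy quantum group $\G = \G_{\CC_{0^+}}$ with $\widetilde{H}_N^+$ by constructing a surjective $*$-homomorphism between their maximal $C^*$-algebras and then upgrading it to an isomorphism via a fusion semiring comparison. My first step would be to show that $C_{\text{max}}(\G)$ is in fact presented as the universal $C^*$-algebra $A_h^{0^+}(N)$: unitarity of $u$ and $\bar u$ are automatic from membership of the pair partitions in any category, so the only extra relation comes from the generator $\pi_{0^+} = [\circ,\bullet]$ of $\CC_{0^+}$. Writing out the intertwiner condition $T_{\pi_{0^+}} \in \Hom(u\otimes \bar u, u\otimes \bar u)$ in coordinates, one checks that it is precisely equivalent to $u_{ki} u_{kj}^* = u_{ik} u_{jk}^* = 0$ for $i \ne j$, matching the defining relations of $A_h^{0^+}(N)$.

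Next I would verify that the coefficients $v_{ij} = u_{ij} z$ of the fundamental representation of $\widetilde{H}_N^+$ inside $C_{\text{max}}(H_N^+) \ast C(S^1)$ satisfy these very relations. Since $z$ is a unitary commuting with nothing in particular but having $z z^* = z^* z = 1$, one gets $v_{ki} v_{kj}^* = u_{ki} z z^* u_{kj}^* = u_{ki} u_{kj}^*$, and the known relations of $H_N^+$ (coming from $\pi_2$ together with the four-block hyperoctahedral partition) force this product to vanish for $i \ne j$; a symmetric calculation handles $v_{ik} v_{jk}^*$. Unitarity of $v$ and $\bar v$ follows from unitarity of $u$, $\bar u$ and $z$. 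By universality of $A_h^{0^+}(N)$ this yields a surjective $*$-homomorphism $\psi: C_{\text{max}}(\G) \to C_{\text{max}}(\widetilde{H}_N^+)$ sending fundamental representation to fundamental representation.

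The final step is to turn this surjection into an isomorphism. By Theorem \ref{thm:mainresult} and the classification in Theorem \ref{thm:classificationfree}, $\G$ has no nontrivial one-dimensional representation and its fusion semiring is $\RR$ with $S(\CC_{0^+}) = \mathcal{S}$. On the other hand, \cite[Cor 2.5.13]{raum2012isomorphisms} computes the fusion semiring of $\widetilde{H}_N^+$ and shows it is also the free fusion semiring over $\mathcal{S}$. The map $\psi$ induces a surjective morphism of fusion semirings between two free fusion semirings with the same generating datum, hence an isomorphism of fusion semirings. Applying \cite[Lem 5.3]{banica1999representations}, which states that a morphism of compact matrix quantum groups that is an isomorphism at the level of fusion semirings is itself an isomorphism, we conclude that $\psi$ is bijective and $\G \simeq \widetilde{H}_N^+$. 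The main subtlety I expect is the first step, where one must be careful that the relation carried by $\pi_{0^+}$ alone, together with the tautological pair-partition relations, really does recover the full presentation of $A_h^{0^+}(N)$ rather than a weaker set of relations; everything downstream is a fairly standard rigidity argument.
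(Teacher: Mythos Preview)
Your proposal is correct and follows essentially the same route as the paper: construct a surjection $C_{\text{max}}(\G)\to C_{\text{max}}(\widetilde{H}_N^+)$ by checking that the generators $v_{ij}=u_{ij}z$ satisfy the $\pi_{0^+}$-relation, invoke \cite[Cor 2.5.13]{raum2012isomorphisms} to match fusion semirings, and conclude via \cite[Lem 5.3]{banica1999representations}. The only difference is that you spell out the identification $C_{\text{max}}(\G)\cong A_h^{0^+}(N)$ in your first step, which the paper merely asserts; note that for the argument you actually only need the surjection, and for that it suffices that $T_{\pi_{0^+}}$ intertwines the fundamental of $\widetilde{H}_N^+$ (since $\CC_{0^+}=\langle\pi_{0^+}\rangle$), so the full presentation issue you flag as a subtlety can in fact be bypassed.
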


\begin{rem}
We could also prove directly that $\CC_{0^{+}}$ is the category of partitions of $\widetilde{H}_{N}^{+}$ and then use Theorem \ref{thm:classificationfree} to recover \cite[Cor 2.5.13]{raum2012isomorphisms}.
\end{rem}

\begin{rem}
The abelianization of $A_{h}^{0^{+}}(N)$ is the algebra $C(H_{N}^{\infty})$ of functions on the group $H_{N}^{\infty}$ of all unitary monomial matrices (i.e. having exactly one non-zero entry in each line and column) of size $N$. Moreover, the quotient of $A_{h}^{0^{+}}(N)$ by the relations $u = \overline{u}$ is the maximal C*-algebra $C_{\text{max}}(H_{N}^{+})$ of the free hyperoctahedral quantum group $H_{N}^{+}$, as expected.
\end{rem}

\section{One-dimensional representations}\label{sec:one}

In the general case (when there are one-dimensional representations), things become more complicated even if one still restricts to noncrossing partitions. One can however try to use the map $\Phi$, though it is ill-behaved with respect to the tensor product, and the nontrivial one-dimensional representations to study the quantum group $\G$. We will first study the possible one-dimensional representations which may appear for a free easy quantum group and then give some structure results for the group they form.

\subsection{Non-through-partitions}

One-dimensional representations of a compact quantum group form a group under the tensor product (the inverse being given by the contragredient), which will be denoted $\mathcal{G}(\G)$. We will of course study this group using partitions. Here is a basic but important fact.

\begin{lem}
Let $p$ be a projective partition. Then, $u_{p}$ is a one-dimensional representation if and only if $t(p) = 0$.
\end{lem}

\begin{proof}
The "if" part was proved in Lemma \ref{lem:trivial}. To prove the "only if" part, first note that if $u_{q}$ is equivalent to the trivial representation, then $t(q) = 0$. Let now $u_{p}$ be a one-dimensional representation and let $u_{\overline{p}}$ be its contragredient. Then, $u_{p}\otimes u_{\overline{p}}$ contains $u_{p\otimes \overline{p}}$, which must therefore be equivalent to the trivial representation. Hence, $t(p)\leqslant t(p\otimes \overline{p}) = 0$.
\end{proof}

We therefore only have to study partitions with no through-block. Let us write, for an integer $k\geqslant 0$, $\beta_{k} = \theta_{k}^{*}\theta_{k}$. This is a projective partition in $\NCc(k, k)$ consisting of $2$ blocks, an upper and a lower one, each having $k$ white points. By convention, $\beta_{0}$ is the empty partition and $\beta_{-k} = \overline{\beta}_{k}$. As for through-partitions, we can recover any projective partition with $t(p) = 0$ from the $\beta_{k}$'s up to equivalence. This is not completely obvious and will be the object of Lemma \ref{lem:1dwords}. We first need the following fact :

\begin{lem}\label{lem:1dwordsbis}
Let $\CC$ be a category of noncrossing partitions and let $B\in \CC(k, k)$ be a projective partition with $t(B) = 0$ and such that \emph{all the points are white}. If $B\neq\emptyset$, then there is an integer $1\leqslant l\leqslant k$ such that $\beta_{l}\in \CC$.
\end{lem}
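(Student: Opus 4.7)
The plan is to argue by strong induction on $k$, the number of points in $B$'s upper row. The base case $k=1$ is immediate: the only nonempty projective partition in $\NCc(1,1)$ with $t=0$ and all points white is $\beta_1$ itself, so one takes $l=1$.

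For the inductive step I would distinguish two cases according to whether the upper points $1$ and $k$ of $B$ belong to the same block. In the first case (call it Case A), Lemma \ref{lem:cappingnonthroughblock} applies directly: writing $B=[w_0,b_1,\dots,b_n,w_n]$, if every $b_i$ is empty then $B$ is literally the outer block and equals $\beta_{n+1}$, so $l=n+1$ works; otherwise at least one $b_i^*b_i\in\CC$ is again a nonempty projective partition with $t=0$, all white, on strictly fewer points, and the induction hypothesis applies to it.

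The main obstacle is the second case (Case B), when $1$ and $k$ lie in different blocks of $B$. By noncrossingness together with the symmetry $B=B^*$, the partition then splits horizontally as $B=B_L\otimes B_R$, with $B_L$ and $B_R$ two projective, all-white, $t=0$ partitions on fewer points each. The plan is to show that $B_R$ itself belongs to $\CC$, from which the induction hypothesis finishes the job. To do this I would rotate the $j$ upper points of the $B_L$-part to the leftmost positions of the lower row using $j$ left rotations, which flip their color to black; since rotations are category operations, the resulting partition remains in $\CC$. The first $2j$ positions of the new lower row then carry a black mirror of $\sigma_L$ (on the left) followed by the original white copy of $\sigma_L$ (on the right), and all their blocks lie entirely inside this region because $B_L$ had no through-blocks. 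I would then cap the black--white boundary pair at positions $j$ and $j+1$ with a rotated $\pi_1$, itself a category operation, and iterate this $j$ times, each time on the fresh boundary created by the previous capping.

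The subtle point to verify is the bookkeeping of this capping procedure: at each step the two current middle points must again form a black--white pair so that a rotated $\pi_1$ applies, all block mergings must stay confined to the $B_L$-region so that $B_R$ is untouched, and blocks that become empty contribute only scalar loop factors to the associated $T_p$, which are irrelevant for membership of a partition in $\CC$. Once these verifications are in place, after $j$ iterations the $B_L$-region has been completely removed and the surviving partition is precisely $B_R$, showing $B_R\in\CC$; the induction hypothesis then produces $\beta_l\in\CC$ with $1\leqslant l\leqslant k-j<k$, which closes the proof.
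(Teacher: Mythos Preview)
Your argument is correct and follows essentially the same inductive strategy as the paper's proof. The only difference is cosmetic: where you carefully handle Case~B by rotating the $j$ upper points of $B_L$ to the lower row and iteratively capping the resulting black--white pairs, the paper simply writes ``up to considering tensor products, we may assume that $1$ and $k$ are in the same block'', relying on the capping technique already established in Section~\ref{sec:free} (the paragraph preceding Lemma~\ref{lem:cappingthroughblock} explains exactly the $\overline{p}\otimes p$ cancellation you redo by hand, and the proof of Lemma~\ref{lem:decomposition} uses it to extract tensor factors). Your Case~A is identical to the paper's main step.
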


\begin{proof}
The proof is by induction on $k$. If $k = 1$, then $B = \beta_{1}$. Assume that the result holds for all $n\leqslant k$. Up to considering tensor products, we may assume that $1$ and $k$ are in the same block and write $B = [w_{0}, b_{1}, \dots, b_{n-1}, w_{n}]\in \CC$ with $b_{i}^{*}b_{i}\in \CC$ for each $i$ by Lemma \ref{lem:cappingnonthroughblock}. If there is an index $i$ such that $b_{i}^{*}b_{i}\neq \emptyset$, we may apply our induction hypothesis to it to conclude. Otherwise, this precisely means that $B = \beta_{k}$.
\end{proof} 

\begin{lem}\label{lem:1dwords}
Let $\CC$ be a category of noncrossing partitions (which is not block-stable) and let $l$ be the smallest strictly positive integer such that $\beta_{l}\in \CC$. Then, any projective partition $p\in \CC(k, k)$ with $t(p) = 0$ is equivalent to $\beta_{l}^{\otimes m}$ or to $\beta_{-l}^{\otimes m}$ for some integer $m$ (with the convention that $\beta_{l}^{\otimes 0} = \beta_{0}$).
\end{lem}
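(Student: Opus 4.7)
The plan is to proceed by strong induction on the size $k$ of $p$. The base case $k = 0$ is immediate since $p = \emptyset = \beta_0$. For the inductive step, I first reduce to the case where $1$ and $k$ lie in the same block of $p$: if $p = p_1 \otimes p_2$ with both factors non-empty and $t(p_i) = 0$, then the rotation-and-capping technique (as in the proof of Lemma~\ref{lem:nontrivial1d}) shows that each $p_i \in \CC$, and by the inductive hypothesis $p_i \sim \beta_{\epsilon_i l}^{\otimes m_i}$ for some $\epsilon_i \in \{+,-\}$ and $m_i \geqslant 0$, whence $p \sim \beta_{\epsilon_1 l}^{\otimes m_1} \otimes \beta_{\epsilon_2 l}^{\otimes m_2}$ by tensoring witnessing intertwiners.

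The crucial ingredient is then the cancellation identity $\beta_l \otimes \beta_{-l} \sim \emptyset$ in $\CC$. To prove it, let $r$ be the one-line partition on $2l$ points with two blocks, one of $l$ whites followed by one of $l$ blacks; this is precisely the rotated version of $\beta_l$ obtained by bringing its entire lower row to the upper (with the color flip), so $r \in \CC$. A direct computation shows $r^*r = \beta_l \otimes \beta_{-l}$ while $rr^* = \emptyset$ (the middle composition produces two loops that do not affect the underlying partition). Iterating this cancellation reduces $\beta_l^{\otimes m_1} \otimes \beta_{-l}^{\otimes m_2}$ down to $\beta_{\epsilon l}^{\otimes \vert m_1 - m_2\vert}$ for the appropriate sign $\epsilon$, completing the disconnected case.

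For the connected case, I write $p = [w_0, b_1, \dots, b_n, w_n]$ and apply Lemma~\ref{lem:cappingnonthroughblock} to obtain $b_i^* b_i \in \CC$ for each $i$. By induction, each $b_i^* b_i \sim \beta_{\pm l}^{\otimes m_i}$, and these equivalences can be used to absorb the $b_i$'s into the outer block up to equivalence, reducing to the elementary case where $p$ consists only of the outer block and its mirror with coloring $w_0 \dots w_n$. For the monochromatic all-white case $p = \beta_n$ with $lm = n$, the composition $r = \beta_l^{\otimes m} \cdot \beta_n$ belongs to $\CC$ and a direct computation yields $r^*r = \beta_n$ and $rr^* = \beta_l^{\otimes m}$ modulo loops, witnessing the equivalence.

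The principal obstacles I anticipate are twofold. First, one needs the auxiliary divisibility claim that $\beta_n \in \CC$ with $n > 0$ forces $l \mid n$, which I would prove by sandwiching $\beta_n$ between copies of $\idpart^{\otimes(n-l)} \otimes \beta_l$ to produce $\beta_{n-l} \otimes \beta_l \in \CC$, then extracting $\beta_{n-l}$ via rotating the $\beta_l$-part to a single row and capping with a rotated version of $\beta_l$, and iterating to contradict the minimality of $l$. Second, handling the mixed-coloring elementary case requires combining symmetric capping with rotated copies of $\beta_l$ and the cancellation identity to reduce to a monochromatic $\beta_{\pm n}$; this is where the argument is most delicate, since the outer block's coloring is constrained by the category but cannot be arbitrarily simplified without the orthogonality condition of Remark~\ref{rem:orthogonal}.
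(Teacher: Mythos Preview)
Your proposal can be made to work, but it is considerably more involved than the paper's proof and leaves genuine gaps precisely at the two points you flag as obstacles. The paper avoids both of them entirely by reorganizing the argument: the reduction to a monochromatic partition is done \emph{first}, not last.

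Concretely, since $t(p)=0$, any symmetric capping of a pair of neighbouring points with different colours keeps $t$ equal to $0$ (the upper and lower rows of $p$ are disconnected, so capping cannot create a through-block). Hence Lemma~\ref{lem:cappingprojective} applies at every step, and after finitely many such cappings $p$ is equivalent to a projective partition all of whose points have the same colour, say white. This single observation makes your ``mixed-coloring elementary case'' disappear before any structural decomposition is attempted. From there the paper does not use induction on the block structure at all: it invokes Lemma~\ref{lem:1dwordsbis} to conclude that for $k<l$ the only possibility is $\emptyset$, and for $k\geqslant l$ it writes $k=ml+r$ and exhibits directly the partition
\[
x = p(\beta_{l}^{\otimes m}\otimes \pi_{1}^{\otimes r})\in\CC,
\]
which satisfies $x^{*}x=p$ and $xx^{*}=\beta_{l}^{\otimes m}\otimes y$ with $y\in\CC(r,r)$; since $r<l$, Lemma~\ref{lem:1dwordsbis} forces $y=\emptyset$. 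No divisibility claim, no cancellation identity $\beta_{l}\otimes\beta_{-l}\sim\emptyset$, and no ``absorbing the $b_{i}$'s'' step is needed.

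In your outline, the step ``these equivalences can be used to absorb the $b_{i}$'s into the outer block up to equivalence'' is not justified and is not obvious: an equivalence $b_{i}^{*}b_{i}\sim\beta_{\pm l}^{\otimes m_{i}}$ does not by itself let you replace $b_{i}$ inside the bracketed expression $[w_{0},b_{1},\dots,w_{n}]$, because the outer block interacts with the replacement. Likewise, your divisibility argument (sandwiching $\beta_{n}$ with $\idpart^{\otimes(n-l)}\otimes\beta_{l}$) can be made precise, but it is extra work that the paper's Euclidean-division trick renders unnecessary. The moral is that doing the colour reduction before anything else collapses the whole problem to the all-white case, where a one-line computation suffices.
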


\begin{proof}
Up to equivalence, we may assume by capping that all the points of $p$ have the same color (if $p$ has the same number of black and white points on each row, then $p_{u}\in \CC$ so that $p$ is equivalent to the empty partition and the result holds). If this color is black, we can consider $\overline{p}$ instead of $p$ to turn all the points into white. Then, Lemma \ref{lem:1dwordsbis} tells us that for $k\leqslant l$, we can only get the empty partition. For $k \geqslant l$, let $k = m\times l + r$ be the euclidian division of $k$ by $l$ and set
\begin{equation*}
x = p(\beta_{l}^{\otimes m}\otimes \pi_{1}^{\otimes r}) \text{ and } q = x^{*}x.
\end{equation*}
Then, $x$ implements an equivalence between $p$ and $q$ and $q = \beta_{l}^{\otimes m}\otimes y$, where $y\in\CC(r, r)$ is a projective partition with $t(y) = 0$. Since $r<l$, $y = \emptyset$, concluding the proof.
\end{proof}

Let us highlight a nontrivial consequence of this fact :

\begin{prop}
Let $N\geqslant 4$ be an integer, let $\CC$ be a category of \emph{noncrossing} partitions and let $\G$ be the associated easy quantum group. Then, the group $\mathcal{G}(\G)$ of one-dimensional representations of $\G$ is cyclic (and in particular abelian).
\end{prop}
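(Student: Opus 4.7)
The plan is to combine the preceding lemma (every one-dimensional representation $u_p$ must satisfy $t(p)=0$) with Lemma \ref{lem:1dwords} to show that a single generator suffices. If $\CC$ is block-stable, then Theorem \ref{thm:mainresult} tells us $\mathcal{G}(\G)$ is trivial, which is vacuously cyclic, so the interesting case is when $\CC$ is not block-stable. In that case, let $l$ be the smallest strictly positive integer with $\beta_{l}\in\CC$ (if no such $l$ exists, Lemma \ref{lem:1dwords} forces every projective non-through-partition to be equivalent to $\emptyset$, again giving the trivial group).

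I would then observe that every irreducible one-dimensional representation of $\G$ is of the form $[u_{p}]$ for some projective $p\in\Proj(w)$ with $t(p)=0$ (by the preceding lemma), and by Lemma \ref{lem:1dwords} such a $p$ is equivalent to $\beta_{l}^{\otimes m}$ or $\beta_{-l}^{\otimes m}$ for some $m\geqslant 0$. Combined with the fact (recalled in Section \ref{sec:representations}) that $u_{p}\sim u_{q}$ if and only if $p\sim q$, this shows that the set of equivalence classes of one-dimensional representations is exhausted by
\begin{equation*}
\{[u_{\beta_{l}^{\otimes m}}] : m\geqslant 0\}\cup\{[u_{\beta_{-l}^{\otimes m}}] : m\geqslant 0\}.
\end{equation*}

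The remaining step is to set $g=[u_{\beta_{l}}]\in\mathcal{G}(\G)$ and check that $g^{m}=[u_{\beta_{l}^{\otimes m}}]$ and $g^{-m}=[u_{\beta_{-l}^{\otimes m}}]$. For this, note that $u_{\beta_{l}}^{\otimes m}$ is one-dimensional (being a tensor product of characters) and irreducible, and it contains $u_{\beta_{l}^{\otimes m}}$ as a subrepresentation (this subrepresentation exists because $\beta_{l}^{\otimes m}\in\CC$, and it sits inside $u^{\otimes w}$ with $w$ the $m$-fold concatenation of the coloring of $\beta_{l}$); an irreducible one-dimensional representation cannot properly contain another representation, so these coincide up to equivalence. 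The contragredient identity $\overline{\beta_{l}}=\beta_{-l}$ gives $g^{-1}=[u_{\beta_{-l}}]$ and hence $g^{-m}=[u_{\beta_{-l}^{\otimes m}}]$. Therefore every element of $\mathcal{G}(\G)$ is a power of $g$.

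The main obstacle is making sure that the tensor product $u_{\beta_{l}}^{\otimes m}$ really is the representation $u_{\beta_{l}^{\otimes m}}$ and not some other irreducible subrepresentation arising from the fusion formula; this is where it is crucial that $t(\beta_{l})=0$, which forces $\min(t(\beta_{l}),t(\beta_{l}))=0$ so that no $\square^{k}$ or $\boxvert^{k}$ contributions with $k\geqslant 1$ appear in the fusion rule, leaving only the $u_{\beta_{l}\otimes\beta_{l}}=u_{\beta_{l}^{\otimes 2}}$ summand. Once this is verified, cyclicity with generator $g$ (of order equal to the smallest $m\geqslant 1$ such that $\beta_{l}^{\otimes m}\sim\emptyset$ in $\CC$, or infinite order if no such $m$ exists) follows at once.
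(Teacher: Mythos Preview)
Your proposal is correct and follows essentially the same route as the paper, which simply records the proposition as a direct consequence of Lemma~\ref{lem:1dwords} without further elaboration. Your extra verification that $[u_{\beta_l}]^m=[u_{\beta_l^{\otimes m}}]$ via the fusion formula (the sum over $k\geqslant 1$ being empty because $t(\beta_l)=0$) is exactly the missing detail the paper leaves implicit, and your handling of the block-stable case and of the contragredient is correct; the parenthetical ``if no such $l$ exists'' is in fact vacuous in the non-block-stable case (Proposition~\ref{prop:nontrivial1d} together with Lemma~\ref{lem:1dwordsbis} guarantee such an $l$), but it does no harm.
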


\subsection{Classification}

We now want to classify the one-dimensional representations of $\G$. In view of Lemma \ref{lem:1dwords}, we shall focus on the set
\begin{equation*}
J(\CC) = \{k\in \Z, \beta_{k}\in \CC\}.
\end{equation*}
In fact, quotienting out $J(\CC)$ by the equivalence relation $k\sim k' \Leftrightarrow \beta_{k}\sim \beta_{k'}$ yields a group $\mathcal{G}(\CC)$ (for $\otimes$) isomorphic to $\mathcal{G}(\G)$. Note that rotating $\beta_{k}$ on one line and capping in the middle yields a rotated version of $\pi_{k-1}$, so that $k\in J(\CC) \Rightarrow k-1\in I(\CC)$. This observation will simplify the study of $J(\CC)$.

\begin{lem}
Let $\CC$ be a category of noncrossing partitions. Then, the following hold :
\begin{enumerate}
\item If $I(\CC) = \Z^{*}$, then there is an integer $n\geqslant 0$ such that $J(\CC) = n\Z$. If moreover $S(\CC) = \Z_{s}$, then $n$ divides $s$.
\item If $I(\CC) = \{-1, 1\}$, then $J(\CC) \subset \{-2, -1, 0, 1, 2\}$.
\end{enumerate}
\end{lem}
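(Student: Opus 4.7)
The plan is to handle the two assertions separately. Assertion (2) follows immediately from the observation preceding the lemma: if $k \in J(\CC)$ and $|k| \geq 2$, then $|k| - 1 \in I(\CC) = \{-1, 1\}$, hence $|k| = 2$. Combined with the trivial case $|k| \leq 1$, this gives $J(\CC) \subseteq \{-2, -1, 0, 1, 2\}$.

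For assertion (1), I would set $n = \min\{k > 0 : k \in J(\CC)\}$ (and $n = 0$ if $J(\CC) = \{0\}$) and prove $J(\CC) = n\Z$ by two inclusions. For $n\Z \subseteq J(\CC)$, the key identity is
\begin{equation*}
\beta_{mn} = \pi_{mn} \cdot \beta_n^{\otimes m} \cdot \pi_{mn} \qquad (m \geq 1).
\end{equation*}
Both factors on the right lie in $\CC$ (horizontal concatenation, and $\pi_{mn} \in \CC$ since $I(\CC) = \Z^*$), so it remains to verify the identity by tracking block connectivity in the composition: the single block of $\pi_{mn}$ merges the $m$ non-through-blocks of $\beta_n^{\otimes m}$ on each row, while the absence of through-blocks in $\beta_n^{\otimes m}$ prevents any through-block from being created. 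Conjugation then yields $\beta_{-mn} \in \CC$ as well.

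For the reverse inclusion, I would argue by Euclidean division. Given $k \in J(\CC)$ with $k > 0$, minimality of $n$ forces $k \geq n$, so I can write $k = qn + r$ with $q \geq 1$ and $0 \leq r < n$, and assume $r > 0$ for contradiction. The first inclusion gives $\beta_{-qn} \in \CC$, hence $\beta_k \otimes \beta_{-qn} \in \CC$ is a partition whose upper and lower rows each consist of $k$ white points followed by $qn$ black points, split into two non-through-blocks. Symmetric capping at the white/black junction using $[\circ,\bullet] \in \CC$ (a rotation of $\idpart$) removes one white and one black from each row per iteration and merges the two non-through-blocks into a single mixed-color block, without creating any through-block. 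After $qn$ iterations the blacks are exhausted, leaving $\beta_r \in \CC$ with $0 < r < n$, contradicting the choice of $n$.

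The second part of (1) exploits the relation $\pi_s \sim \pi_{0^+}$ implied by $S(\CC) = \Z_s$. The intertwiner $r := r^{\pi_s}_{\pi_{0^+}} = (\pi_{0^+})_u^{*}(\pi_s)_u$ lies in $\CC$, and a direct computation shows that $r \in \NCc(s,2)$ is the single-block partition with upper coloring $\circ^s$ and lower coloring $\circ\bullet$. Capping the two lower points of $r$ with $[\circ,\bullet] \in \CC$ produces $\theta_s \in \CC$, and hence $\beta_s = \theta_s^{*}\theta_s \in \CC$, so $s \in J(\CC) = n\Z$. The main obstacle I anticipate is the block-connectivity bookkeeping in the compositions, especially verifying that the iterated capping in the Euclidean step never spuriously creates a through-block and that the colorings of the caps match throughout.
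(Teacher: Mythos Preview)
Your argument is correct. Part (2) matches the paper exactly. For part (1) the paper takes a slightly more economical route: it observes in one line that
\[
(\beta_{k}\otimes \beta_{k'})\pi_{k+k'} = r^{\beta_{k+k'}}_{\beta_{k}\otimes \beta_{k'}} \in \CC,
\]
so $J(\CC)$ is closed under addition and hence (being symmetric and containing $0$) is a subgroup $n\Z$ of $\Z$. Your two-inclusion argument reaches the same conclusion: your identity $\beta_{mn} = \pi_{mn}\,\beta_n^{\otimes m}\,\pi_{mn}$ is the special case $k = k' = n$ iterated, and your capping of $\beta_k \otimes \beta_{-qn}$ is effectively a hand-rolled proof that $J(\CC)$ is closed under subtraction. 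Either works; the paper's formulation is shorter, while yours is more explicit about the block bookkeeping.

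For the divisibility statement your route is in fact cleaner than the paper's. You spell out why $\theta_s \in \CC$ (cap the lower row of $r^{\pi_s}_{\pi_{0^+}}$), deduce $\beta_s \in \CC$, and conclude $s \in J(\CC) = n\Z$. The paper instead performs a second Euclidean division $s = nm + r$ and constructs $x = \theta_s(\beta_n^{\otimes m}\otimes \pi_1^{\otimes r})$ with $x^*x = \beta_n^{\otimes m}\otimes \beta_r$ to force $r = 0$; this is redundant once $J(\CC) = n\Z$ is already established, and it also leaves the membership $\theta_s \in \CC$ implicit. So on this point your argument is the tidier one.
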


\begin{proof}
$(1)$ : Noticing that
\begin{equation*}
(\beta_{k}\otimes \beta_{k'})\pi_{k+k'} = r^{\beta_{k+k'}}_{\beta_{k}\otimes \beta_{k'}},
\end{equation*}
we see that if $I(\CC) = \Z^{*}$, then $J(\CC)$ is stable by addition (and $\beta_{k+k'} \sim \beta_{k}\otimes \beta_{k'}$). It is thus an additive subgroup of $\Z$ and is equal to $n\Z$ for some $n$. This $n$ is the smallest positive integer $k$ such that $\beta_{k}\in \CC$, so that in particular $n\leqslant s$ as soon as $\theta_{s}\in \CC$. Assume that $S(\CC) = \Z_{s}$, let $s = n\times m + r$ be the euclidian division of $s$ by $n$ and set $x = \theta_{s}(\beta_{n}^{\otimes m}\otimes \pi_{1}^{\otimes r})$. Then, $x^{*}x = \beta_{n}^{\otimes n}\otimes \beta_{r}$ and therefore $\beta_{r}\in \CC$. This implies that $r = 0$, hence $n$ divides $s$.

$(2)$ : This is clear from the fact that $k\in J(\CC) \Rightarrow k-1\in I(\CC)$.
\end{proof}

Note that in the second case, there are in fact three possibilities : $\{-2, -1, 0, 1, 2\}$, $\{-2, 0, 2\}$ and $\{-1, 0, 1\}$. Deriving the structure of $\mathcal{G}(\CC)$ from that of $S(\CC)$ is now straightforward.

\begin{thm}\label{thm:1d}
Let $\CC$ be a category of noncrossing partitions.
\begin{enumerate}
\item If $I(\CC) = \Z^{*}$ and $S(\CC) = \Z_{s}$, then $\mathcal{G}(\CC) = \Z_{d}$ with $d = s/n$ if $J(\CC) = n\Z$.
\item If $I(\CC) = \{-1, 1\}$ and $1\in J(\CC)$, then $\mathcal{G}(\CC) = \Z_{s}$ with $s = \min\{k\in \N, \beta_{1}^{\otimes k}\sim \emptyset\}$.
\item If $I(\CC) = \{-1, 1\}$ and $1\notin J(\CC)$, then $\mathcal{G}(\CC) = \Z_{s}$ with $s = \min\{k\in \N, \beta_{2}^{\otimes k}\sim \emptyset\}$.
\end{enumerate}
\end{thm}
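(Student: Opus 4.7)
The plan is to reduce the computation of $\mathcal{G}(\CC)$ in each case to determining the order of a single canonical generator. By Lemma \ref{lem:1dwords}, if $l$ denotes the smallest strictly positive element of $J(\CC)$, then every projective partition $p \in \CC$ with $t(p) = 0$ is equivalent to $\beta_l^{\otimes m}$ or to $\beta_{-l}^{\otimes m}$ for some $m \geq 0$. Since $[\beta_{-l}] = [\overline{\beta}_l]$ is the inverse of $[\beta_l]$ in $\mathcal{G}(\CC)$, this group is cyclic, generated by $[\beta_l]$. In cases (2) and (3) one has $l = 1$ and $l = 2$, and the order of this generator is literally the $s$ of the statement, so $\mathcal{G}(\CC) = \Z_s$ at once. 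The remaining case (1) asks us to show that $[\beta_n]$ has order exactly $d = s/n$.

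Using the relation $\beta_{k + k'} \sim \beta_k \otimes \beta_{k'}$ already recorded in the proof of Lemma \ref{lem:classificationfusionring}, the order of $[\beta_n]$ is the smallest $k > 0$ with $\beta_{kn} \sim \emptyset$. The canonical equivalence witness is $r^{\beta_{kn}}_{\emptyset} = (\beta_{kn})_u = \theta_{kn}$, so the question becomes: what is the smallest $k > 0$ with $\theta_{kn} \in \CC$? I would prove the sharper statement $\theta_j \in \CC \Leftrightarrow s \mid j$; applied to $j = kn$ together with $n \mid s$, this is equivalent to $d \mid k$ and gives $k = d$ as required.

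For the forward direction, if $\theta_j \in \CC$ with $j > 0$ then for any $k' \geq j$ the composition $(\theta_j \otimes \idpart^{\otimes (k' - j)}) \pi_{k'}$ lies in $\CC$ and a direct computation shows it equals $r^{\pi_{k'}}_{\pi_{k' - j}}$, so $\pi_{k'} \sim \pi_{k' - j}$ and hence $s \mid j$ in $S(\CC) = \Z_s$. For the backward direction it suffices to produce $\theta_s \in \CC$, since then $\theta_{dn} = \theta_s$ witnesses $k = d$. The equivalence $\pi_s \sim \pi_{0^+}$ inside $S(\CC) = \Z_s$ yields $r^{\pi_s}_{\pi_{0^+}} = (\pi_{0^+})_u^* (\pi_s)_u \in \CC$, which is identified by inspection with the one-block partition in $\NCc(s, 2)$ having $s$ upper white points and two lower points coloured $\circ, \bullet$. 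Rotating both lower points to the upper row (with the obligatory colour flip) produces a one-block partition in $\NCc(s + 2, 0)$ in which a neighbouring white-black pair appears at one end; capping this pair with the rotation of $\idpart$, a one-block element of $\NCc(2, 0)$ that lies in every category of partitions, removes those two points while keeping the block connected, leaving exactly $\theta_s$.

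Putting the two directions together shows the order of $[\beta_n]$ is precisely $d$, and hence $\mathcal{G}(\CC) = \Z_d$. The main delicate point is the colour-aware rotation-and-capping argument yielding $\theta_s$: in the unitary setting, rotations flip the colour of the rotated point, so one must track the resulting colour sequence carefully and verify both that the auxiliary capping block (the coloured rotation of $\idpart$) is available in $\CC$ and that it matches the configuration produced by the rotations. Handling the degenerate values $s = \infty$ (where the forward direction alone forces $[\beta_n]$ to have infinite order) and $n = 0$ (where $\mathcal{G}(\CC)$ is trivial) should be straightforward once the main argument is in place.
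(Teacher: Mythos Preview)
Your argument is correct and follows the same route as the paper, only with far more detail. The paper compresses case~(1) into two sentences: ``rotating $r^{\beta_k}_{\beta_{k'}}$ and capping yields $\theta_{|k-k'|}$, so $S(\CC)=\Z_s$ means $\beta_k\sim\beta_{k'}$ iff $k\equiv k'\ [s]$''; you unpack exactly this by proving the biconditional $\theta_j\in\CC\Leftrightarrow s\mid j$ and then reading off the order of $[\beta_n]$. One small correction: the relation $\beta_{k+k'}\sim\beta_k\otimes\beta_{k'}$ is not established in Lemma~\ref{lem:classificationfusionring} (which concerns $I(\CC)$), but in the unlabelled lemma immediately preceding the theorem, where it is obtained from $(\beta_k\otimes\beta_{k'})\pi_{k+k'}=r^{\beta_{k+k'}}_{\beta_k\otimes\beta_{k'}}$.
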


\begin{proof}
$(1)$ : The basic remark is that rotating $r^{\beta_{k}}_{\beta_{k'}}$ and capping yields $\theta_{\vert k - k'\vert}$. Thus, $S(\CC) = \Z_{s}$ means that $\beta_{k}\sim \beta_{k'}$ if and only if $k = k' [s]$, which is precisely the statement.

$(2)$ : It is known by Lemma \ref{lem:1dwords} that any one-dimensional representation is equivalent to a tensor power of $\beta_{1}$, hence the result.

$(3)$ : This is exactly the same reasoning as above.
\end{proof}

Important for the sequel will be to know whether $\mathcal{G}(\CC)$ is finite or not. Here is what we can deduce from this section.

\begin{cor}\label{cor:finite1d}
Let $\CC$ be a category of noncrossing partitions. Then, $\mathcal{G}(\CC)$ is a finite group if and only if it is trivial or there are integers $s, k\geqslant 1$ such that $\theta_{s}^{\otimes k}\in \CC$.
\end{cor}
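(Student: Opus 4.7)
The plan is to reduce everything to a single equivalence and then apply the classification already proved in Theorem~\ref{thm:1d}. The key observation to record at the start is that, for any $s,k\geqslant 1$, the relation $\beta_{s}^{\otimes k}\sim \emptyset$ (as projective partitions in $\CC$) holds if and only if $\theta_{s}^{\otimes k}\in \CC$. Indeed, $\beta_{s}^{\otimes k} = (\theta_{s}^{\otimes k})^{*}\theta_{s}^{\otimes k}$ so that $\beta_{s}^{\otimes k}\in \CC$ as soon as $\theta_{s}^{\otimes k}$ does, and since $\emptyset_{u}$ is the empty partition, the intertwiner $r^{\beta_{s}^{\otimes k}}_{\emptyset}$ is literally $\theta_{s}^{\otimes k}$.

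For the direct implication, assume $\mathcal{G}(\CC)$ is finite. If it is trivial there is nothing to prove, so assume $J(\CC)\neq \{0\}$ and let $n\geqslant 1$ be the smallest integer with $\beta_{n}\in \CC$. A direct inspection of the three cases of Theorem~\ref{thm:1d} shows that $\mathcal{G}(\CC)$ is cyclic, generated by the class of $[\beta_{n}]$: in case $(1)$ because $J(\CC) = n\Z$ and the equivalence classes are indexed by $\Z/(s/n)\Z$, in case $(2)$ because $n = 1$, and in case $(3)$ because $n = 2$. Finiteness then means $[\beta_{n}]$ has some order $d\geqslant 1$, hence $\beta_{n}^{\otimes d}\sim \emptyset$, which by the opening observation gives $\theta_{n}^{\otimes d}\in \CC$.

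Conversely, if $\mathcal{G}(\CC)$ is trivial it is finite, so assume it is nontrivial and that $\theta_{s}^{\otimes k}\in \CC$ for some $s,k\geqslant 1$. Then $\beta_{s}^{\otimes k}\in \CC$ and, by the opening observation, $\beta_{s}^{\otimes k}\sim \emptyset$. Since $\mathcal{G}(\CC)$ is nontrivial, Theorem~\ref{thm:mainresult} tells us that $\CC$ is not block-stable, so Lemma~\ref{lem:1dwords} is available; applied to $\beta_{s}^{\otimes k}\in \CC(sk,sk)$ with $n$ as above, it produces an integer $m\geqslant 0$ with $\beta_{s}^{\otimes k}\sim \beta_{n}^{\otimes m}$. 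Transitivity of $\sim$ gives $\beta_{n}^{\otimes m}\sim \emptyset$, i.e. $\theta_{n}^{\otimes m}\in \CC$, so $[\beta_{n}]$ has order dividing $m$ in the cyclic group $\mathcal{G}(\CC) = \langle [\beta_{n}]\rangle$, which is therefore finite.

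The only delicate point in the plan is case-checking that $[\beta_{n}]$ actually generates $\mathcal{G}(\CC)$ in all three situations of Theorem~\ref{thm:1d} and that the non-block-stability hypothesis needed to invoke Lemma~\ref{lem:1dwords} is automatic from $\mathcal{G}(\CC)\neq \{1\}$; both points are essentially bookkeeping but they are what tie the corollary to the earlier classification.
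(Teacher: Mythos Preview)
Your argument is correct. The forward direction is essentially the paper's: in both cases one reads off from Theorem~\ref{thm:1d} that the generator $[\beta_{n}]$ has finite order precisely when some $\theta_{n}^{\otimes d}\in\CC$.

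For the converse direction you take a genuinely different route. The paper shows by an explicit partition manipulation (using $\pi_{s-1}$ and a euclidean-division trick) that $\theta_{s}^{\otimes k}\in\CC$ forces $\theta_{ks}\in\CC$, which places us in one of the three bullet conditions derived from Theorem~\ref{thm:1d}. Your approach instead passes immediately to $\beta_{s}^{\otimes k}\sim\emptyset$ and invokes Lemma~\ref{lem:1dwords} to rewrite this as $\beta_{n}^{\otimes m}\sim\emptyset$, concluding finite order of the generator. This is more conceptual and avoids the explicit construction; the paper's computation buys the extra information that a single one-block $\theta_{ks}$ lies in $\CC$ (hence also constrains $S(\CC)$), which your method does not yield directly.

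One small point to tighten: you allow $m\geqslant 0$, but ``order dividing $m$'' is vacuous when $m=0$. In fact $m\geqslant 1$ is automatic: $\beta_{s}^{\otimes k}\in\CC$ has $sk\geqslant 1$ white points, so Lemma~\ref{lem:1dwordsbis} gives some $\beta_{l}\in\CC$ with $l\leqslant sk$, hence $n\leqslant sk$ and the euclidean division in the proof of Lemma~\ref{lem:1dwords} produces $m\geqslant 1$. It is worth stating this explicitly.
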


\begin{proof}
From what precedes, we see that if $\mathcal{G}(\CC)$ is nontrivial, then it is finite if and only if $\CC$ satisfies one of the following conditions :
\begin{itemize}
\item $\theta_{s}\in \CC$ for some integer $s\geqslant 3$.
\item $\theta_{1}^{\otimes k}\in \CC$ for some integer $s\geqslant 1$ (this is equivalent to $\beta_{1}^{\otimes k}\sim \emptyset$).
\item $\theta_{2}^{\otimes k}\in \CC$ for some integer $s\geqslant 1$ (this is equivalent to $\beta_{2}^{\otimes k}\sim \emptyset$).
\end{itemize}
Assume that $\theta_{s}^{\otimes k}\in \CC$ for some integer $k$. Then, $\pi_{s-1}\in \CC$ and if $ks = m\times (s-1) + r$ is the euclidian division of $ks$ by $s-1$, we have
\begin{equation*}
r^{\pi_{(s-1)\times (m+1)}}_{\pi_{s-1-r}} = (\theta_{s}^{\otimes k}\otimes \pi_{1}^{\otimes (s-1-r)})(\pi_{s-1}^{\otimes (m+1)}) \in \CC.
\end{equation*}
This means that $\theta_{\vert (s-1)\times (m+1) - (s-1-r)\vert} = \theta_{ks}\in \CC$, concluding the proof.
\end{proof}

\section{Applications}\label{sec:applications}

\subsection{Length functions}\label{subsec:length}

Length functions on discrete quantum groups were introduced in \cite[Def 3.1]{vergnioux2007property}. Any compact matrix quantum group is endowed with a natural "word length function" given, for an irreducible representation $\alpha$, by
\begin{equation*}
L(\alpha) = \inf\{k, \alpha\subset u^{\otimes w} \text{ with } \vert w\vert = k\}.
\end{equation*}
This length function is central and proper. However, the structure of free fusion ring gives another length function, inherited from the length function on the underlying free monoid, or equivalently from the through-block structure of the projective partitions.

\begin{de}\label{de:length}
Let $\G$ be an easy quantum group and let $\CC$ be its associated category of partitions. If $p\in \Proj$, we set $\ell(u_{p}) = t(p)$. This defines a central length function on $\G$.
\end{de}

Note that Definition \ref{de:length} makes sense for any easy quantum group but is ill-behaved in general. For instance, nontrivial one-dimensional representations have length $0$. This problem can easily be overcome by setting $\ell'(u_{p}) = \ell(u_{p}) + \delta_{p\sim\emptyset}$, but the crucial issue is rather whether this length function is proper or not. Let us characterize precisely when this is the case.

\begin{prop}\label{prop:properlength}
Let $\CC$ be a category of noncrossing partitions. Then, the length function $\ell$ is proper if and only if both $S(\CC)$ and $\mathcal{G}(\CC)$ are finite.
\end{prop}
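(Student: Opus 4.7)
The length function $\ell(u_p) = t(p)$ is proper exactly when, for each $k \geq 0$, only finitely many equivalence classes of projective partitions $p \in \CC$ satisfy $t(p) \leq k$. My plan is to prove the two directions separately: necessity by examining length zero and length one, and sufficiency by invoking the decomposition of Corollary~\ref{cor:generators} together with Lemma~\ref{lem:1dwords}.

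For the forward direction, I first observe that the length-zero irreducibles are exactly the one-dimensional representations of $\G$, hence parametrized by the group $\mathcal{G}(\CC)$: one direction is the first lemma of Section~\ref{sec:one}, while the other follows because a projective partition with $t=0$ is, by Lemma~\ref{lem:1dwords}, equivalent to some $\beta_l^{\otimes m}$ (or to the empty partition), each of which gives a one-dimensional representation. Hence properness at level zero forces $\mathcal{G}(\CC)$ to be finite. For level one, the classes in $S(\CC)$ parametrize one-block projective partitions in $\CC$, each of which has $t=1$ and produces a length-one irreducible; distinct classes yield non-isomorphic irreducibles, so the finiteness of level-one irreducibles forces $S(\CC)$ to be finite.

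For the converse, assume both $S(\CC)$ and $\mathcal{G}(\CC)$ finite. I apply Corollary~\ref{cor:generators} to write any projective $p \in \CC$ with $t(p) = k$, up to equivalence, in the form
\[
p \sim B_0 \otimes A_1 \otimes B_1 \otimes \dots \otimes A_k \otimes B_k,
\]
where each $A_i$ is a one-block projective partition with $t(A_i) = 1$ and each $B_i$ is a projective partition with $t(B_i) = 0$. By construction $[A_i] \in S(\CC)$, while Lemma~\ref{lem:1dwords} (with the empty class included) places each $[B_i]$ in $\mathcal{G}(\CC)$. The equivalence class of $p$ is therefore determined by at most $|S(\CC)|^k \cdot |\mathcal{G}(\CC)|^{k+1}$ choices, so the number of length-$k$ irreducibles is finite and $\ell$ is proper.

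The main bookkeeping point is to confirm that the pieces $A_i$ and $B_i$ really do range over $S(\CC)$ and $\mathcal{G}(\CC)$ respectively; this is entirely taken care of by the through-block decomposition of Section~\ref{sec:free} and by Lemma~\ref{lem:1dwords}. It is worth emphasising that uniqueness of the decomposition is not required, since an upper bound on the number of such decompositions already bounds the number of equivalence classes of $p$.
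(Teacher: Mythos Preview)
Your proof is correct and follows essentially the same route as the paper: necessity via the observations $\ell^{-1}(\{0\}) = |\mathcal{G}(\CC)|$ and $\ell^{-1}(\{1\}) \geqslant |S(\CC)|$, and sufficiency via the decomposition of Corollary~\ref{cor:generators} together with the bound $|\ell^{-1}(\{k\})| \leqslant |S(\CC)|^{k}\,|\mathcal{G}(\CC)|^{k+1}$. You spell out a few justifications (e.g.\ invoking Lemma~\ref{lem:1dwords} to place the $B_i$ in $\mathcal{G}(\CC)$) that the paper leaves implicit, but the argument is the same.
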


\begin{proof}
Assume that $\vert S(\CC)\vert = s$. Then, according to Corollary \ref{cor:generators}, to build a projective partition $p$ with $t(p) = k$ we have to chose :
\begin{itemize}
\item $k$ elements $A_{1}, \dots A_{k}$ in $S(\CC)$ : $s^{k}$ choices.
\item Between $A_{i}$ and $A_{i+1}$, before $A_{1}$ and after $A_{k}$, an element of $\mathcal{G}(\CC)$ : $\vert\mathcal{G}(\CC)\vert^{k+1}$ choices.
\end{itemize}
Hence, we have
\begin{equation*}
\ell^{-1}(\{k\}) \leqslant s^{k}\vert\mathcal{G}(\CC)\vert^{k+1},
\end{equation*}
yielding the "if" part of the statement. Moreover, we obviously have
\begin{equation*}
\ell^{-1}(\{1\}) \geqslant s \text{ and } \ell^{-1}(\{0\}) = \vert \mathcal{G}(\CC)\vert,
\end{equation*}
giving the "only if" part of the statement.
\end{proof}

\begin{rem}
The previous reasoning can also be used to obtain a lower bound. In fact, $\ell^{-1}(\{k\})$ contains at least all words of length $k$ on $S(\CC)$ multiplied by an element of $\mathcal{G}(\CC)$, hence
\begin{equation*}
\ell^{-1}(\{k\}) \geqslant \vert \mathcal{G}(\CC)\vert s^{k}.
\end{equation*}
\end{rem}

\begin{rem}\label{rem:equivalencelengthfunctions}
Assume that $\vert S(\CC)\vert = s$, $\vert \mathcal{G}(\CC)\vert < +\infty$ and that any non-through-block projective partition is equivalent to a partition with at most $2D$ points. Then, $\ell$ and $L$ are equivalent in the following sense : for any $\alpha\in \Irr(\G)$,
\begin{equation*}
\ell(\alpha) \leqslant L(\alpha)\leqslant (s+D)L(\alpha) + D
\end{equation*}
This comes from the fact that, up to equivalence, a partition $p$ with $t(p)=k$ has at most $sk + (k+1)D = (s+D)k+D$ points.
\end{rem}

Noticing that if $S(\CC)$ is infinite, then $\mathcal{G}(\CC)$ is either infinite or trivial, we get the following corollary :

\begin{cor}
Let $\CC$ be a category of noncrossing partitions \emph{which is not $\CC_{\infty}$}. Then, $\ell$ is proper if and only if $\mathcal{G}(\CC)$ is finite.
\end{cor}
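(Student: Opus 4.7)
The plan is to combine Proposition \ref{prop:properlength} with the observation stated just before the corollary. The forward direction is immediate: if $\ell$ is proper then both $S(\CC)$ and $\mathcal{G}(\CC)$ are finite by Proposition \ref{prop:properlength}, so in particular $\mathcal{G}(\CC)$ is finite, and the exclusion of $\CC_\infty$ plays no role here.

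For the converse I would assume $\mathcal{G}(\CC)$ finite and aim to conclude, again via Proposition \ref{prop:properlength}, that $S(\CC)$ is finite as well. Supposing for contradiction that $S(\CC)$ is infinite, the observation gives that $\mathcal{G}(\CC)$ is either infinite or trivial; since $\mathcal{G}(\CC)$ is assumed finite it must be trivial. Thus the real content of the corollary is a uniqueness statement: any category $\CC$ of noncrossing partitions with $S(\CC)$ infinite and $\mathcal{G}(\CC)$ trivial must equal $\CC_\infty$, and that is where the hypothesis $\CC\neq\CC_\infty$ will be used.

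To establish this uniqueness, Theorem \ref{thm:classificationfree} together with the analysis of Section \ref{sec:one} pins down the structure of such a $\CC$: the condition $S(\CC)=\Z$ forces $\pi_{0^{+}}\in\CC$, $I(\CC)=\Z^{*}$ (so every $\pi_{k}$ lies in $\CC$), and rules out $\theta_{m}\in\CC$ for $m\neq 0$; while $\mathcal{G}(\CC)$ trivial additionally rules out $\beta_{k}\in\CC$ for $k\neq 0$. Since $\pi_{2}\in\CC$, the inclusion $\CC_\infty\subseteq\CC$ is automatic. For the reverse inclusion I would argue by contradiction: any hypothetical partition $p\in\CC\setminus\CC_\infty$ must contain a block whose white/black balance differs between the two rows, and the main task is to manipulate $p$ via the capping technique of Section \ref{sec:free} (together with Lemmas \ref{lem:cappingthroughblock} and \ref{lem:cappingnonthroughblock}) in order to extract from it either a $\theta_{m}$ or a $\beta_{k}$ with nonzero index, contradicting the constraints just listed. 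This last extraction step is where I expect the main difficulty to lie, as it probably requires a careful case analysis based on the through-block decomposition of the offending partition.
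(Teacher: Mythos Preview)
Your proposal is correct and follows essentially the same route as the paper. Both arguments reduce the converse direction to Proposition~\ref{prop:properlength} by showing that $S(\CC)$ must be finite, and both isolate as the crucial point the implication ``$S(\CC)=\Z$ and $\mathcal{G}(\CC)$ trivial $\Rightarrow$ $\CC=\CC_\infty$''. The paper organises this as a three-case split on $I(\CC)$ and on whether $\mathcal{G}(\CC)$ is trivial or finite nontrivial (handling the latter via Corollary~\ref{cor:finite1d}), whereas you fold the nontrivial case into the pre-corollary observation and argue by contradiction; these are logically equivalent packagings. The paper simply asserts the uniqueness of $\CC_\infty$ as ``clear'' without spelling it out.

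One remark on your final step: you overestimate its difficulty. You have already observed that no $\beta_k$ with $k\neq 0$ lies in $\CC$; by Theorem~\ref{thm:mainresult} this means $\CC$ is block-stable. Hence the offending block $b$ of $p\in\CC\setminus\CC_\infty$ is itself in $\CC$, and no through-block decomposition or case analysis is needed. Rotate $b$ onto one row: the condition $b\notin\CC_\infty$ says precisely that the resulting one-block partition has unequal numbers of white and black points. Repeatedly cap adjacent points of opposite colour (the required pair partitions are rotations of the identity, always in $\CC$) until all remaining points share a colour; what is left is $\theta_m$ or $\overline{\theta}_m$ for some $m\neq 0$, contradicting $S(\CC)=\Z$.
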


\begin{proof}
If $I(\CC) = \{-1, 1\}$, then $S(\CC)$ is finite and $\ell$ is proper as soon as $\mathcal{G}(\CC)$ is finite.

If $I(\CC) = \Z^{*}$ and $\mathcal{G}(\CC)$ is trivial, then it is clear that $\ell$ is proper as soon as $\CC\neq \CC_{\infty}$.

Assume eventually that $I(\CC) = \Z^{*}$ and that $\mathcal{G}(\CC)$ is finite and nontrivial. By Corollary \ref{cor:finite1d}, $\theta_{s}^{\otimes k}\in \CC$ for some integers $s$ and $k$, implying that $\theta_{ks}\in \CC$. This means that $S(\CC)$ is finite and thus $\ell$ is proper.
\end{proof}

\subsection{The Haagerup property}

We now turn to approximation properties for free easy quantum groups. More precisely, we will give a unified proof of the Haagerup property for free easy quantum groups such that $\ell$ is proper. This will be achieved using the Haagerup property for $S_{N}^{+}$ proved by M. Brannan in \cite{brannan2012reduced} and the properness of the length function $\ell$ studied in the previous subsection.

Let us first recall some facts concerning the Haagerup property. Because the quantum groups we are studying are of \emph{Kac type}, we can restrict our attention, as far as approximation properties are concerned, to characters of representations.

\begin{de}
Let $\G$ be a compact quantum group and let $v\in C_{\text{max}}(\G)\otimes \B(H)$ be a finite-dimensional representation of $\G$. Its \emph{character} is defined by
\begin{equation*}
\chi_{v} = (\ii\otimes \Tr)(v)\in C_{\text{max}}(\G).
\end{equation*}
\end{de}

It is proved in \cite[Cor 5.9]{woronowicz1987compact} that two representations are unitarily equivalent if and only if their characters are equal. Moreover, we have by \cite[Thm 5.8]{woronowicz1987compact} that
\begin{equation*}
\chi_{u\oplus v} = \chi_{u} + \chi_{v} \text{ and } \chi_{u\otimes v} = \chi_{u}\chi_{v}.
\end{equation*}
In other words, the (non-closed) algebra $\Pol(\G)_{0}$ generated in $C_{\text{max}}(\G)$ by the characters is isomorphic to the \emph{complexified fusion ring} $R(\G)\otimes_{\Z}\C$ of $\G$. The Haagerup property admits a simple description at the level of characters.

\begin{de}
A compact quantum group $\G$ \emph{of Kac type} is said to have the \emph{Haagerup property} if there is a net $(\varphi_{i})_{i}$ of states on the algebra of characters $\Pol(\G)_{0}$ such that
\begin{enumerate}
\item $(\varphi_{i})_{i}$ converges pointwise to the counit (equivalently, for any $\alpha\in \Irr(\G)$, $\varphi_{i}(\chi_{\alpha})\underset{i}{\rightarrow} \dim(\alpha)$).
\item For any $i$ and for any $\epsilon > 0$, there is a finite subset $F\subset \Irr(\G)$ such that for any $\alpha\notin F$,
\begin{equation*}
\left\vert \frac{\varphi_{i}(\chi_{\overline{\alpha}})}{\dim(\alpha)}\right\vert \leqslant \epsilon.
\end{equation*}
\end{enumerate}
\end{de}

\begin{rem}
Our definition of the Haagerup property looks a bit different from that of  \cite{brannan2011approximation} but both are shown to be equivalent (as well as several other characterizations) in \cite{daws2014haagerup}.
\end{rem}

Recall that if $\G$ is a free easy quantum group, then there is a canonical surjection
\begin{equation*}
\Pi : C_{\text{max}}(\G) \rightarrow C_{\text{max}}(S_{N}^{+})
\end{equation*}
characterized by the fact that it sends the fundamental representation of $\G$ onto the fundamental representation of $S_{N}^{+}$. Following the strategy of F. Lemeux in \cite{lemeux2013haagerup}, we will use $\Pi$ to pull back the states giving the Haagerup property on $S_{N}^{+}$. Let us denote by $(\varphi_{i})_{i}$ any net of states implementing the Haagerup property on $S_{N}^{+}$. The natural states to look at for a general free easy quantum group $\G$ are
\begin{equation*}
\psi_{i} = \varphi_{i}\circ \Pi.
\end{equation*}
For clarity, we will first deal with the computational part of the proof. For $p\in \Proj$, we write
\begin{equation*}
\Pi(\chi_{p}^{\G}) = \sum_{k=0}^{t(p)}A_{k}(p)\chi_{k}^{S_{N}^{+}}
\end{equation*}
where $A_{k}(p)$ is a positive integer.

\begin{lem}\label{lem:decomposition surjection}
Let $K_{0} > 0$ be an integer. If $N > 4(s+D)$, where $\vert S(\CC)\vert = s$ and any one-dimensional representation has a representative with at most $2D$ points, then
\begin{equation*}
\frac{\displaystyle\sum_{k=0}^{K_{0}}A_{k}(p)\varphi_{i}\left(\chi_{k}^{S_{N}^{+}}\right)}{\displaystyle\sum_{k=0}^{t(p)}A_{k}(p)\dim\left(u_{k}^{S_{N}^{+}}\right)} \longrightarrow 0
\end{equation*}
as $t(p)\to +\infty$.
\end{lem}

\begin{proof}
First note that all the terms appearing in the quotient are positive. Moreover, we can choose the net $(\varphi_{i})_{i\in [i_{0}, N]}$ such that for all $k$,
\begin{equation*}
\varphi_{i}\left(\chi_{k}^{S_{N}^{+}}\right)\leqslant C_{0}\left(\frac{i}{N}\right)^{k}\dim\left(u_{k}^{S_{N}^{+}}\right)\leqslant C_{0}\dim\left(u_{k}^{S_{N}^{+}}\right)
\end{equation*}
where $C_{0}$ is a constant depending only on $i_{0} > 4$ (see the proof of \cite[Thm 4.2]{brannan2012reduced}). As a consequence, it is enough to compute the limit of
\begin{equation*}
\frac{\displaystyle\sum_{k=0}^{K_{0}}A_{k}(p)\dim\left(u_{k}^{S_{N}^{+}}\right)}{\displaystyle\sum_{k=0}^{t(p)}A_{k}(p)\dim\left(u_{k}^{S_{N}^{+}}\right)}.
\end{equation*}
It is clear that $A_{t(p)}(p) = 1$, so that the denominator is greater than $\dim\left(u_{t(p)}^{S_{N}^{+}}\right)$ which is known to grow as $N^{t(p)}$. As for the numerator, it can be bounded by
\begin{equation*}
K_{0}\times\dim(u_{K_{0}}^{S_{N}^{+}})\times\max_{k\leqslant K_{0}}A_{k}(p).
\end{equation*}
Moreover, the number $A_{k}(p)$ is at least bounded by the total number of partitions on $L\left(u_{p}^{\G}\right)$ points, i.e. the Catalan number $C_{L\left(u_{p}^{\G}\right)}$. Gathering these estimates, the quantity that we are interested in is less than
\begin{equation*}
K_{0}\dim\left(u_{K_{0}}^{S_{N}^{+}}\right)\frac{C_{L\left(u_{p}^{\G}\right)}}{N^{t(p)}}.
\end{equation*}
To conclude, simply use the well-known estimate $C_{n} \sim \pi^{-1/2}n^{-3/2}4^{n}$ as well as the fact that $L(u_{p}^{\G})\leqslant (s+D)\ell(u_{p}^{\G}) + D = (s+D)t(p) + D$.
\end{proof}

\begin{rem}
For $O_{N}^{+}$ and $U_{N}^{+}$, we have $L(\alpha) = \ell(\alpha)$ so that the estimates work for all $N > 4$.
\end{rem}

We are now ready for the proof of the Haagerup property.

\begin{thm}\label{thm:generalhaagerup}
Let $\CC$ be a category of \emph{noncrossing} partitions such that \emph{the length function $\ell$ is proper}, let $N\geqslant 4^{s+D}$ be an integer and let $\G$ be the associated easy quantum group. Then, $\G$ has the Haagerup property.
\end{thm}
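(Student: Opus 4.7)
The strategy, as the text suggests, is to follow F. Lemeux: pull back to $\G$ the Haagerup states of $S_N^+$ (provided by Brannan) along the canonical surjection $\Pi : C_{\text{max}}(\G) \to C_{\text{max}}(S_N^+)$. Specifically, let $(\varphi_i)$ be a net of states on $\Pol(S_N^+)_0$ implementing the Haagerup property of $S_N^+$, and define $\psi_i := \varphi_i \circ \Pi$. Each $\psi_i$ is a state on $\Pol(\G)_0$ because $\Pi$ is a unital $*$-homomorphism, and $\psi_i$ converges pointwise to the counit of $\G$ thanks to the factorization $\varepsilon_{\G} = \varepsilon_{S_N^+} \circ \Pi$ (which holds because $\Pi$ maps the fundamental representation of $\G$ to that of $S_N^+$) together with $\varphi_i \to \varepsilon_{S_N^+}$ pointwise.

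The heart of the proof is the $C_0$ condition for $(\psi_i)$. Using Lemma \ref{lem:decomposition surjection} and the fact that $\Pi$ preserves the dimensions of representations, one can write $\Pi(u_p^{\G}) = \bigoplus_n m_n(p) \, v_n$, where $v_n$ denotes the $n$-th irreducible of $S_N^+$ of dimension $\mu_{2n}(\sqrt{N})$. This yields
$$\frac{|\psi_i(\chi_{\overline{p}}^{\G})|}{\dim(u_p^{\G})} \;\leq\; \sum_n \frac{m_n(p) \, \dim(v_n)}{\dim(u_p^{\G})} \cdot \frac{|\varphi_i(\chi_{\overline{v_n}}^{S_N^+})|}{\dim(v_n)},$$
a convex combination of the Haagerup ratios on $S_N^+$. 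Brannan's explicit construction provides a uniform exponential decay $|\varphi_i(\chi_{\overline{v_n}})|/\dim(v_n) \leq \rho_i^n$ for some $\rho_i \in (0,1)$. Fixing a threshold $\delta > 0$ and choosing $K = K(\delta, i)$ so that $\rho_i^K < \delta/2$ bounds the contribution of indices $n > K$ by $\delta/2$, and the remaining task is to show that the head $\sum_{n \leq K} m_n(p) \dim(v_n) / \dim(u_p^{\G})$ tends to $0$ as $t(p) \to \infty$.

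This head bound is the main obstacle. The denominator is at least $\dim(v_{t(p)})$ since $m_{t(p)}(p) \geq 1$ (the partition $p$ itself contributes), and $\dim(v_n) = \mu_{2n}(\sqrt N)$ grows exponentially in $n$ with rate governed by $N \geq 4$. For the numerator, the decisive input is the properness of $\ell$: by Proposition \ref{prop:properlength} both $S(\CC)$ and $\mathcal{G}(\CC)$ are finite, and by Corollary \ref{cor:generators} every equivalence class admits a representative partition whose total size is bounded linearly in $t(p)$ (with constants depending on $|S(\CC)|$ and $|\mathcal{G}(\CC)|$). The noncrossing projective sub-partitions of such a $p$ with at most $K$ through-blocks are then counted by an expression growing at most exponentially in $t(p)$ with base dominated by $N$, so $\sum_{n \leq K} m_n(p) \dim(v_n)$ is controlled by the denominator and the head ratio vanishes as $t(p) \to \infty$. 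Properness of $\ell$ also ensures that only finitely many irreducibles $u_p^{\G}$ satisfy $t(p) \leq T$ for any fixed $T$, so choosing $T$ large enough forces all but finitely many irreducibles to satisfy the required bound, establishing the $C_0$ condition for $(\psi_i)$ and hence the Haagerup property of $\G$.
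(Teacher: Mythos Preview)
Your overall strategy---pulling Brannan's Haagerup states for $S_N^+$ back through $\Pi$ and splitting the resulting sum over $\{p\}\cup\mathcal{R}_{\CC}(p)$ into a ``tail'' (large through-block number) and a ``head'' (small through-block number)---is exactly the paper's. The tail estimate is fine, though you invoke an explicit exponential decay rate $\rho_i^n$ from Brannan's construction, whereas the paper uses only the abstract $C_0$ condition on the $\varphi_i$.

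The genuine gap is in your head estimate. You want $\sum_{n\leq K} m_n(p)\dim(v_n)/\dim(u_p^{\G})\to 0$ and you attack this by comparing exponential growth rates: bound the denominator below by $\dim(v_{t(p)})$, bound the numerator by a count of noncrossing projective sub-partitions of a minimal representative, and claim the latter grows with base ``dominated by $N$''. Neither side of this comparison holds. First, for $N=4$ one has $\dim(v_n)=\mu_{2n}(2)=2n+1$, which is only linear, so your lower bound on the denominator is far too weak. Second, a minimal representative of size roughly $C\cdot t(p)$ (with $C$ depending on $|S(\CC)|$ and $|\mathcal{G}(\CC)|$) has on the order of the Catalan number $\sim 4^{C\,t(p)}$ noncrossing projective sub-partitions; there is no reason for $4^{C}$ to be dominated by $N$, and for $H_N^{s+}$ with $s$ large it certainly is not. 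So the rate comparison fails outright.

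The paper sidesteps this entirely. Having fixed $K_0$ so that $\varphi_i(\chi_k)/\dim(v_k)\leq\epsilon/2$ for $k\geq K_0$, it bounds the $C$-part (where $t(q)\geq K_0$) by $\tfrac{\epsilon}{2}$ using only the identity $\dim(u_p^{\G})=\sum_q\dim(v_{t(q)})$, and for the $B$-part it invokes properness of $\ell$ to argue that $\sum_{q\in B}\varphi_i(\chi_{t(q)})$ is bounded by a quantity \emph{independent of $p$}; the ratio then goes to $0$ simply because $\dim(u_p^{\G})\to\infty$. The point is not to race two exponentials but to get an absolute bound on the head sum and let the denominator carry the argument.
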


\begin{proof}
Let $(\varphi_{i})_{i}$ be a net of states implementing the Haagerup property for $S_{N}^{+}$. We claim that the associated net $(\psi_{i})_{i}$ does the job. The fact that it converges pointwise to the counit is clear because $(\varphi_{i})_{i}$ does and $\Pi$ is a Hopf $*$-algebra morphism. We therefore only have to check that for a fixed $i$,
\begin{equation*}
\frac{\psi_{i}\left(\chi_{\overline{p}^{\G}}\right)}{\dim\left(u_{p}^{\G}\right)} \longrightarrow 0
\end{equation*}
outside finite sets of equivalence classes of irreducible representations. Since $\ell$ is a proper length function on $\G$, it is equivalent to prove that the above quantity tends to $0$ as $\ell\left(u_{p}^{\G}\right)\rightarrow \infty$. Let $\epsilon >0$ and let $K_{0}$ be an integer such that, for all $k\geqslant K_{0}$,
\begin{equation*}
\frac{\varphi_{i}\left(\chi_{k}^{S_{N}^{+}}\right)}{\dim\left(u_{k}^{S_{N}^{+}}\right)} \leqslant \epsilon/2,
\end{equation*}
Then,
\begin{equation*}
\frac{\psi_{i}\left(\chi_{p}^{\G}\right)}{\dim\left(u_{p}^{\G}\right)} = \frac{1}{\dim\left(u_{p}^{\G}\right)}\sum_{k=0}^{K_{0}}A_{k}(p)\varphi_{i}\left(\chi_{k}^{S_{N}^{+}}\right) + \sum_{k > K_{0}}\frac{A_{k}(p)\varphi_{i}\left(\chi_{t(q)}^{S_{N}^{+}}\right)}{\dim\left(u_{p}^{\G}\right)}.
\end{equation*}
Because $\Pi$ is a Hopf $*$-algebra homomorphism it is dimension-preserving, hence $\dim\left(u_{p}^{\G}\right) = \sum A_{k}(p)\dim\left(u_{k}^{S_{N}^{+}}\right)$. Thus, by Lemma \ref{lem:decomposition surjection}, there is an integer $K_{1}$ such that the first term is less than $\epsilon/2$ as soon as $\ell(p) = t(p) \geqslant K_{1}$. The second term can be bounded by
\begin{equation*}
\frac{\epsilon}{2\dim\left(u_{p}^{\G}\right)}\sum_{k> K_{0}}A_{k}(p)\dim\left(u_{t(q)}^{S_{N}^{+}}\right) \leqslant \frac{\epsilon}{2}.
\end{equation*}
Combining the two estimates, we have, for $\ell\left(u_{p}^{\G}\right)\geqslant \max(K_{0}, K_{1})$,
\begin{equation*}
\frac{\psi_{i}\left(\chi_{\overline{p}}^{\G}\right)}{\dim\left(u_{p}^{\G}\right)} \leqslant \epsilon
\end{equation*}
and the result follows.
\end{proof}

\begin{rem}
For $N\geqslant 5$, the quantum group $S_{N}^{+}$ is not amenable by \cite{banica1999symmetries}. Since amenability passes to quantum subgroups, we can infer that a free easy quantum group is never amenable when $N\geqslant 5$.
\end{rem}

Theorem \ref{thm:generalhaagerup} applies in particular to free quantum groups without nontrivial one-dimensional representations. In that case, we know by Proposition \ref{prop:properlength} that $\ell$ is proper provided the quantum group is not $H_{N}^{\infty +}$. Hence the following corollary :

\begin{cor}
The following quantum groups have the Haagerup property for $N$ large enough : $O_{N}^{+}$, $U_{N}^{+}$, $\widetilde{H}_{N}^{+}$ and $H_{N}^{s+}$ for $\infty > s \geqslant 2$.
\end{cor}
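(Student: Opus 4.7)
The plan is to deduce this corollary directly from Theorem \ref{thm:generalhaagerup}, which asserts that any free easy quantum group whose length function $\ell$ is proper has the Haagerup property. Thus, for each quantum group in the list, it suffices to verify that $\ell$ is proper. By Proposition \ref{prop:properlength}, properness of $\ell$ is equivalent to the finiteness of both $S(\CC)$ and $\mathcal{G}(\CC)$, so the whole argument reduces to a finiteness check.

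First I would recall that each of the four quantum groups $O_N^+$, $U_N^+$, $\widetilde{H}_N^+$ and $H_N^{s+}$ (for any $s\geqslant 1$) is listed at the end of the classification section as arising from a block-stable category of noncrossing partitions. By Theorem \ref{thm:mainresult}, block-stability is equivalent to the absence of nontrivial one-dimensional representations, i.e.\ to $\mathcal{G}(\CC)$ being trivial. So the second finiteness condition is automatic in each case.

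Next I would read off the cardinality of $S(\CC)$ from the examples given after Theorem \ref{thm:classificationfree}: $S(\CC)=\{[1]\}$ for $O_N^+$, $S(\CC)=\{[-1],[1]\}$ for $U_N^+$, $S(\CC)=\mathcal{S}$ (which has four elements) for $\widetilde{H}_N^+$, and $S(\CC_s)=\Z_s$ for $H_N^{s+}$. All four of these sets are finite \emph{precisely because} we have excluded the case $s=\infty$ (where $S(\CC_\infty)=\Z$ is infinite, which is why $H_N^{\infty+}$ is correctly omitted from the statement). Combining both finiteness facts with Proposition \ref{prop:properlength}, $\ell$ is proper in each of the four cases, and Theorem \ref{thm:generalhaagerup} delivers the Haagerup property.

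There is no real obstacle here: the corollary is a bookkeeping consequence of the classification in Theorem \ref{thm:classificationfree}, the characterization of properness in Proposition \ref{prop:properlength}, and the main approximation result Theorem \ref{thm:generalhaagerup}. The only point that deserves a line in the write-up is the explicit mention that the restriction $s<\infty$ is exactly what ensures $S(\CC_s)$ is finite, explaining why $H_N^{\infty+}$ is not covered by this method.
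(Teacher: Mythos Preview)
Your proposal is correct and follows essentially the same approach as the paper. The paper's argument is compressed into the sentence preceding the corollary: these quantum groups have no nontrivial one-dimensional representations (so $\mathcal{G}(\CC)$ is trivial, hence finite), and since none of them is $H_{N}^{\infty+}$ the set $S(\CC)$ is finite by the classification; then Proposition \ref{prop:properlength} gives properness of $\ell$ and Theorem \ref{thm:generalhaagerup} yields the Haagerup property. Your write-up simply unpacks each finiteness check case by case, which is exactly what is implicit in the paper's one-line justification.
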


This recovers previous results of M. Brannan \cite{brannan2011approximation} and F. Lemeux \cite{lemeux2013haagerup}. In particular, Theorem \ref{thm:generalhaagerup} gives explicit multipliers implementing the Haagerup property on $U_{N}^{+} = \widetilde{O}_{N}^{+}$ and $\widetilde{H}_{N}^{+}$ without resorting to a free product trick.

\subsection{Recovering the fusion ring}

In this section we adress the question of reconstructing the fusion ring $R^{+}(\G)$ from $\RR$ and $\mathcal{G}(\CC)$. Using the map $\Phi$, we get an additive subsemigroup $R^{+}(\CC) = \Phi(\RR)$ of $R^{+}(\G)$ (even though it is not a subsemiring in general). Similarly, we can see $\N[\mathcal{G}(\CC)]$ as a subsemiring of $R^{+}(\G)$. This data is enough to recover the fusion \emph{ring} of $\G$.

\begin{prop}\label{prop:generatingthering}
Let $N\geqslant 4$, let $\CC$ be a category of noncrossing partitions and let $\G$ be the associated easy quantum group. Then, $R(\G)$ is generated as a \emph{ring} by $R^{+}(\CC)$ and $\mathcal{G}(\CC)$.
\end{prop}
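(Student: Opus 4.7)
The plan is to show, by strong induction on $t(p)$, that every irreducible representation $u_p$ of $\G$ lies in the subring $S \subset R(\G)$ generated by $\Phi(\RR)$ and $\mathcal{G}(\CC)$ (identifying $\RR$ with a subset of $R^{+}(\G)$ via the injective map $\Phi$ of Lemma \ref{lem:injectivePhi}). The base case $t(p) = 0$ is immediate: then $u_p$ is one-dimensional, so $u_p \in \mathcal{G}(\CC) \subset S$.

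For the inductive step, suppose $t(p) \geq 1$. By Corollary \ref{cor:generators}, $p$ is equivalent to a partition of the form $p' \otimes A \otimes B$, where $A \in \CC$ is an elementary projective through-partition (so $[A] \in S(\CC)$ and $u_A = \Phi([A]) \in \Phi(\RR)$), $B \in \CC$ is a projective partition with $t(B) = 0$ (so $u_B \in \mathcal{G}(\CC)$), and $p'$ is a projective partition with $t(p') = t(p) - 1$. Setting $q = A \otimes B$, the fusion rule for $u_A \otimes u_B$ collapses (since $t(B) = 0$) to $u_q = u_A \cdot u_B$, which therefore already lies in $S$.

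Next I apply the fusion rule to $u_{p'} \otimes u_q$. Because $t(q) = 1$, only the indices $k = 0$ and, when $t(p') \geq 1$, $k = 1$ contribute:
$$u_{p'} \cdot u_q = u_p + u_{p' \square^1 q} + u_{p' \boxvert^1 q},$$
with the usual convention $u_r = 0$ for $r \notin \CC$. A direct count from the definitions of $h_{\square}^k$ and $h_{\boxvert}^k$ gives $t(p' \square^1 q) = t(p) - 2$ and $t(p' \boxvert^1 q) = t(p) - 1$, both strictly less than $t(p)$. Rearranging,
$$u_p = u_{p'} \cdot u_q - u_{p' \square^1 q} - u_{p' \boxvert^1 q},$$
and each right-hand term belongs to $S$ by the induction hypothesis, so $u_p \in S$ as well. (When $t(p) = 1$, the two correction terms are absent and the identity reduces to $u_p = u_{p'} \cdot u_q$, which is directly a product of generators.)

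I do not expect a serious obstacle: once Corollary \ref{cor:generators} and the fusion rule are in hand, the argument is bookkeeping plus the combinatorial fact that $\square^1$ and $\boxvert^1$ strictly decrease the through-block count. The only delicate point is checking that $t(p' \square^1 q) < t(p)$ and $t(p' \boxvert^1 q) < t(p)$, which is immediate from the fact that $h_\square^1$ has no through-block and $h_\boxvert^1$ has exactly one.
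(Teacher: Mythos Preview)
Your proof is correct and follows essentially the same strategy as the paper: strong induction on $t(p)$, with Corollary~\ref{cor:generators} supplying a tensor decomposition into elementary through-partitions and non-through-partitions, and the fusion rules ensuring that all correction terms have strictly smaller through-block count. The only difference is cosmetic: the paper tensors all the factors $u_{B_0}\otimes u_{A_1}\otimes\cdots\otimes u_{B_{t(p)}}$ at once and observes that $u_p$ is the unique subrepresentation of maximal through-block number, whereas you peel off one factor $A\otimes B$ at a time and write out the two correction terms explicitly.
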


\begin{proof}
Let $R$ denote the subring of $R(\G)$ generated by $R^{+}(\CC)$ and $\mathcal{G}(\CC)$ and let us prove by induction on $t(p)$ that $[u_{p}]$ is in $R$. If $t(p) = 0$ , then $[u_{p}]\in \mathcal{G}(\CC)\subset R$.

Assume now that $t(p) > 0$ and let $B_{0}\otimes A_{1}\otimes \dots\otimes A_{t(p)}\otimes B_{t(p)}$ be an equivalent projective partition given by Corollary \ref{cor:generators}. Then, $u_{p}$ is equivalent to a subrepresentation of $v = u_{B_{0}}\otimes u_{A_{1}}\otimes \dots\otimes u_{A_{t(p)}}\otimes u_{B_{t(p)}}$, which is in $R$ by definition. Moreover, all the other subrepresentations of $v$ are associated to partitions $q$ with $t(q) < t(p)$. Therefore, they are in $R$ by the induction hypothesis. We conclude that $u_{p}\in R$.
\end{proof}

This proposition does not give an explicit description of the fusion ring. Such a description is probably quite complicated in general, and we will focus on a particular case : when $R^{+}(\CC)$ is as a subsemiring of $R^{+}(\G)$.

\begin{prop}
Let $N\geqslant 4$ be an integer, let $\CC$ be a category of noncrossing partitions and let $\G$ be the associated easy quantum group. If $R^{+}(\CC)$ is a subsemiring of $R^{+}(\G)$, then exactly one of the following holds :
\begin{enumerate}
\item $\mathcal{G}(\CC)$ is trivial.
\item $\G\in \{B_{N}^{+}\times \Z_{2}, S_{N}^{+}\times \Z_{2}, B_{N}^{+}\ast \Z_{2}\}$.
\item $S(\CC) = \{[-1], [0^{+}], [0^{-}], [1]\}$ and $J(\CC) = \{-1, 0, 1\}$.
\item $S(\CC) = \{[-1], [1]\}$ and $J(\CC) = \{-1, 0, 1\}$.
\end{enumerate}
\end{prop}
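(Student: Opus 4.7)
The plan is to assume $\Phi$ is a semiring embedding and show one of cases (1)--(4) must hold. If $\mathcal{G}(\CC)$ is trivial, then Theorem \ref{thm:mainresult} combined with Lemma \ref{lem:bijectivePhi} yields that $\Phi$ is a full semiring isomorphism, so case (1) holds. We henceforth assume $\mathcal{G}(\CC)$ is non-trivial and aim to establish one of cases (2), (3), or (4).

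The core step is to extract a rigid constraint by comparing both tensor products on a single letter. For $[p] \in S(\CC)$ corresponding to a through-block projective partition with $t(p) = 1$, the free formula yields
\[
[p] \otimes_{\RR} [\overline{p}] = [p][\overline{p}] + [p] \ast [\overline{p}] + 1,
\]
where the final $1$ comes from the deepest split $z = [p]$, in which $a$ and $b$ are both the empty word, so $ab$ is the multiplicative identity of $\RR$ while $a \ast b = 0$. On the other hand the fusion rule gives $u_p \otimes u_{\overline{p}} = u_{p \otimes \overline{p}} \oplus u_{p \square^1 \overline{p}} \oplus u_{p \boxvert^1 \overline{p}}$. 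Matching terms through $\Phi$, and identifying $\Phi([p] \ast [\overline{p}])$ with $u_{p \boxvert^1 \overline{p}}$ via Definition \ref{de:fusion}, forces $u_{p \square^1 \overline{p}}$ to be equivalent to the trivial representation, i.e., $p \square^1 \overline{p} \sim \emptyset$. A depth-by-depth analysis for longer words $w = [p_1]\dots[p_n]$ and $w' = \overline{w}$ yields analogous constraints at every split, which reduce to this single-letter version combined with the block-stability-style matching used in the proof of Theorem \ref{thm:mainresult}.

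The remainder is a case analysis based on Theorem \ref{thm:classificationfree}. When $I(\CC) = \Z^*$, so $S(\CC) = \Z_s$, the constraint applied to every $[\pi_k]$ forces $2 \in J(\CC)$, which together with $J(\CC) = n\Z$ (from Section \ref{sec:one}) gives $n \in \{1, 2\}$; combined with $n \mid s$ and the non-triviality $n < s$ of $\mathcal{G}(\CC) = \Z_{s/n}$, a further comparison with the fusion rules narrows the possibilities and identifies $\G$ as one of $S_N^+ \times \Z_2$, $B_N^+ \times \Z_2$, or $B_N^+ \ast \Z_2$, yielding case (2). When $I(\CC) = \{-1, 1\}$, so $S(\CC) \in \{\{[1]\}, \{[-1], [1]\}, \mathcal{S}\}$, the constraint together with $J(\CC) \subseteq \{-2, -1, 0, 1, 2\}$ forces $J(\CC) = \{-1, 0, 1\}$ for the last two shapes, giving cases (4) and (3); the shape $\{[1]\}$ is subsumed into case (2). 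The main obstacle is the identification of the three quantum groups in case (2): they share very similar free parts and are distinguished only by which non-through-block partitions lie in $\CC$, so separating the direct-product structures $B_N^+ \times \Z_2$ and $S_N^+ \times \Z_2$ from the free-product structure $B_N^+ \ast \Z_2$ requires an explicit comparison against the categories of partitions generating each candidate.
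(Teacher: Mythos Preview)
Your core step has a genuine gap: the constraint you extract from $[p]\otimes_{\RR}[\overline{p}]$ is \emph{always} satisfied and therefore gives no information about $\CC$. Indeed, for any one-block projective $p\in\CC$ with $t(p)=1$, the partition $p\square^{1}\overline{p}$ has upper row equal to $p$ rotated onto one line; since rotation preserves $\CC$, this upper row lies in $\CC$ and hence $p\square^{1}\overline{p}\sim\emptyset$ automatically. So the identity $\Phi([p]\otimes_{\RR}[\overline{p}])=u_{p}\otimes u_{\overline{p}}$ holds in every category of noncrossing partitions, not just those where $\Phi$ embeds as a subsemiring. Your subsequent claim that ``the constraint applied to every $[\pi_{k}]$ forces $2\in J(\CC)$'' therefore has no support; nothing you have written distinguishes the embedding hypothesis from the general situation.

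The paper's argument exploits a different comparison: it looks at $[\pi_{1}]\otimes_{\RR}[\pi_{1}]$ with the \emph{same} sign rather than conjugate signs. In $R^{+}(\G)$ one has $u_{\pi_{1}}\otimes u_{\pi_{1}}=u_{\pi_{1}\otimes\pi_{1}}\oplus u_{\pi_{2}}\oplus u_{\beta_{2}}$ (terms absent if the corresponding partition is not in $\CC$), whereas the free product $[\pi_{1}]\otimes_{\RR}[\pi_{1}]$ never produces a nontrivial non-through term. More generally, $\beta_{k}\prec\pi_{1}^{\otimes k}$ for $k\geqslant 2$, so the embedding hypothesis forces the dichotomy: either $\beta_{k}\notin\CC$ or $\beta_{k}\sim\emptyset$ (equivalently $\theta_{k}\in\CC$). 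This single observation drives the whole proof: nontriviality of $\mathcal{G}(\CC)$ then forces $\beta_{1}\in\CC$; if in addition $\beta_{2}\in\CC$ one gets $\theta_{2}\in\CC$, hence $\G\subset O_{N}^{+}$, and the list in (2) drops out of the known orthogonal classification; otherwise $J(\CC)=\{-1,0,1\}$, which forces $I(\CC)=\{-1,1\}$, and cases (3) and (4) follow from Theorem~\ref{thm:classificationfree}. Your case analysis (and in particular the route to case~(2) via $n\mid s$ and ``explicit comparison against the categories'') is both more laborious and, as it stands, unsupported by any valid constraint.
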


\begin{proof}
First note that $\beta_{k}$ appears as a subpartition of $\pi_{1}^{\otimes k}$ for any $k\geqslant 2$. This means that we have the following alternative : either $\beta_{k}\notin \CC$ or $\beta_{k}\sim \emptyset$, which means that $\theta_{k}\in \CC$. In particular, $\mathcal{G}(\CC)$ is nontrivial only if $\beta_{1}\in \CC$.

Assume therefore that $\beta_{1}\in \CC$. If $\beta_{2}\in \CC$, then $\theta_{2}\in \CC$ and $\G\subset O_{N}^{+}$. Otherwise, we must have $J(\CC) = \{-1, 0, 1\}$, hence $I(\CC) = \{-1, 1\}$. If $1\sim -1$, then again $\G\subset O_{N}^{+}$. We are then in the case $(1)$ or $(2)$ (using for case $(2)$ the classification of all free easy orthogonal quantum groups given in \cite[Thm 2.9]{weber2012classification}).

Summarizing, we have proven that excluding $(1)$ and $(2)$, we must have $I(\CC) = \{-1, 1\}$, $-1\nsim 1$, $\beta_{1}\in \CC$ and $\beta_{2}\notin \CC$. This gives us either $S(\CC) = \{[-1], [1]\}$ or $S(\CC) = \mathcal{S}$.
\end{proof}

This result looks rather incomplete since the last two cases are not explicitely described. Such a description, however, will appear quite straightforwardly as a consequence of the classification of all free unitary easy quantum groups in \cite{tarrago2015unitary}.

\bibliographystyle{amsplain}
\bibliography{../../quantum}

\end{document}